\newtheorem{theorem}{Theorem}[section]
\newtheorem{lemma}[theorem]{Lemma}
\theoremstyle{definition}
\newtheorem{definition}[theorem]{Definition}
\newtheorem{cor}[theorem]{Corollary}
\newtheorem{example}[theorem]{Example}
\theoremstyle{remark}
\newtheorem{remark}[theorem]{Remark}
\numberwithin{equation}{section}
\newcommand{\mP}{{\mathbb P}}
\newcommand{\mA}{{\mathbb A}}
\newcommand{\C}{{\mathbb C}}
\newcommand{\Z}{{\mathbb Z}}
\DeclareMathOperator{\Br}{Br}
\DeclareMathOperator{\Res}{Res}
\DeclareMathOperator{\Frac}{Frac}
\DeclareMathOperator{\CH}{CH}
\DeclareMathOperator{\Pic}{Pic}
\DeclareMathOperator{\coker}{coker}
\DeclareMathOperator{\Gal}{Gal}
\DeclareMathOperator{\Spec}{Spec}
\DeclareMathOperator{\Proj}{Proj}
\begin{document}

\title{Rationality of Brauer-Severi surface bundles over rational 3-folds}

\author{Shitan Xu}
\address{ Department of Mathematics, Michigan State University,
 East Lansing, MI 48824-1027, USA}

\email{xushita1@msu.edu}

\subjclass[2020]{Primary 14E08,14M20}

\date{\today}

\keywords{Rationality, Brauer-Severi surface bundle, algebraic geometry}

\begin{abstract}
We give a sufficient condition for a Brauer-Severi surface bundle over a rational 3-fold to not be stably rational. Additionally, we present an example that satisfies this condition and demonstrate the existence of families of Brauer-Severi surface bundles whose general members are smooth and not stably rational.
\end{abstract}

\maketitle

\section{Introduction}
This paper is motivated by the study of the stable rationality of conic bundles and Brauer-Severi surface bundles. Let $X$ be a projective variety over an algebraically closed field $k$. We say that $X$ is {\it rational} if $X$ is birational to a projective space $\mP^n_{k}$ for some natural number $n$. We call $X$ {\it stably rational} if there exists a natural number $m$ such that $X \times_{k} \mP^m_{k}$ is rational. In \cite{MR3849287}, an example of a quadratic surface bundle over $\mP^2$ that is not stably rational and has a nontrivial unramified Brauer group is constructed. This example is realized as a divisor in $\mP^2 \times \mP^3$ of bidegree (2,2), with the quadratic surface bundle structure induced by the first projection.

In \cite{MR4069651}, this elegant example was examined from a different perspective: it naturally becomes a conic bundle over $\mP^3$ via the second projection. This structure allowed the authors to establish a sufficient condition \cite[Thm.~2.6]{MR4069651} for a conic bundle over $\mP^3$ to not be stably rational. Furthermore, they introduced a new example of a flat family of conic bundles over $\mP^3$, where a very general member is not stably rational, using a theorem of Voisin \cite[Thm.~2.1]{MR3359052}, in the form of \cite[Thm.~2.3]{MR3481353}. Voisin’s theorem serves as a powerful tool in the study of the stable rationality of families of varieties. This approach is commonly referred to in the literature as the specialization method. In a recent paper \cite{pirutka2023cubicsurfacebundlesbrauer}, Pirutka introduced the notion of the relative unramified cohomology group, which combines the approach of constructing nontrivial unramified Brauer class via fibrations (\cite{10.1112/plms/s3-25.1.75},\cite{Colliot-Thelene1989}) and the method given in \cite{MR3909896} and \cite{MR4013741} to avoid geometric construction of universally $CH_0$-trivial desingularization. Our proof in section 6 applied this idea in a different manner.

A natural next step following these advancements is to investigate the stable rationality of Brauer-Severi surface bundles over $\mP^3$. In 2017, Kresch and Tschinkel introduced good models of Brauer-Severi surface bundles using the concept of a root stack \cite{MR3993007}. With this definition, they constructed a flat family of Brauer-Severi surface bundles over $\mP^2$ \cite[Thm.~1]{MR4046975}, in which the general member is smooth and not stably rational.

This paper begins by generalizing Theorem 2.6 of \cite{MR4069651} to Brauer-Severi surface bundles over 3-folds. After constructing a new (singular) example with a nontrivial unramified Brauer group, we obtain the following result:
\begin{restatable}{theorem}{primetheorem}
\label{t:final}
There exists a flat projective family of Brauer-Severi surface bundles over $\mP^3_{\C}$, where a general fiber in this family is smooth and not stably rational. 
\end{restatable}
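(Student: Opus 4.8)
The plan is to apply the specialization method of Voisin, in the refined form of \cite[Thm.~2.3]{MR3481353}, to a suitable flat projective family of Brauer-Severi surface bundles over $\mP^3_\C$. Concretely, I would first produce the family itself: one needs a flat projective morphism $\mathcal{X} \to B$ over some rational (or at least unirational, positive-dimensional smooth) base $B$, whose fibers are Brauer-Severi surface bundles over $\mP^3_\C$ in the sense of Kresch–Tschinkel's root-stack model, such that the general fiber $\mathcal{X}_b$ is smooth. The natural way to build this is to take the singular example constructed in the body of the paper — the one with nontrivial unramified Brauer group, whose non-stable-rationality over $\mP^3$ is guaranteed by the generalization of \cite[Thm.~2.6]{MR4069651} proved in the first section — as a special member $\mathcal{X}_0$ of such a family, and then exhibit smooth members nearby (e.g. by varying the defining data of the Brauer-Severi bundle inside an appropriate linear system or parameter space, and checking via a Bertini-type argument that the generic member is smooth).

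The second ingredient is to verify the hypotheses of the specialization theorem. Voisin's criterion (in the Colliot-Thélène–Pirutka form) requires that the special fiber $\mathcal{X}_0$ admit a \emph{universally $\CH_0$-trivial resolution} $\widetilde{\mathcal{X}_0} \to \mathcal{X}_0$, and that $\widetilde{\mathcal{X}_0}$ be not stably rational — the latter being detected by the nontriviality of $\Br_{\mathrm{nr}}$ (or of some unramified cohomology group), which is precisely what the earlier construction supplies. So the key step here is a careful analysis of the singularities of $\mathcal{X}_0$: one must resolve them and check that the resolution is universally $\CH_0$-trivial, meaning the fibers of the resolution morphism are all universally $\CH_0$-trivial varieties (this is where one uses that the singular locus is mild — typically the exceptional fibers over the singular points are rational varieties or projective bundles, hence $\CH_0$-trivial, and one may need to stratify). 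One then invokes the birational invariance of unramified Brauer group under universally $\CH_0$-trivial morphisms to transfer nontriviality of $\Br_{\mathrm{nr}}(\mathcal{X}_0)$ to $\Br_{\mathrm{nr}}(\widetilde{\mathcal{X}_0})$, concluding that $\widetilde{\mathcal{X}_0}$, and hence $\mathcal{X}_0$, is not stably rational.

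Granting these two ingredients, the specialization theorem yields that a very general fiber $\mathcal{X}_b$ of the family is not stably rational; combined with the genericity of smoothness established in the first step, a very general — in particular a general, after shrinking — fiber is both smooth and not stably rational, which is exactly the assertion of Theorem~\ref{t:final}. (One should be slightly careful about the distinction between "very general" as it appears in Voisin's theorem and "general" as stated; the point is that the smooth locus of the family is a nonempty Zariski-open subset, and the non-stably-rational locus is, by the specialization argument, the complement of a countable union of proper closed subsets, so their intersection is still nonempty and dense, which suffices.)

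I expect the main obstacle to be the construction and resolution of the singular special fiber $\mathcal{X}_0$: one needs an explicit Brauer-Severi surface bundle over $\mP^3_\C$ that simultaneously (i) has computably nontrivial unramified Brauer group — requiring a hands-on residue computation along the discriminant/degeneration divisor, using the structure theory of $\Br_{\mathrm{nr}}$ for such bundles from the first section — and (ii) has singularities controlled enough to admit a universally $\CH_0$-trivial resolution, which is a genuinely delicate local computation at each type of singular point appearing in the root-stack model. Balancing these two competing demands — enough degeneracy to force nontrivial Brauer class, but not so much that the singularities become intractable — is the technical heart of the argument.
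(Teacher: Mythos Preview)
Your overall strategy is correct and matches the paper's at the coarse level: use the singular example with nontrivial unramified Brauer group as a special fiber, embed it in a family with smooth general member, and apply the specialization method. However, the paper departs from your plan at precisely the point you flag as the ``main obstacle.''

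You propose to construct an explicit universally $\CH_0$-trivial resolution $\widetilde{\mathcal{X}_0}\to\mathcal{X}_0$ and analyze its fibers. The paper explicitly avoids this. Instead it invokes Schreieder's cohomological refinement \cite[Proposition~26]{MR3909896}: rather than controlling $\CH_0$ of the exceptional loci, one takes \emph{any} resolution $\tilde Y\to Y$ and checks that the given nontrivial unramified class $\tilde\alpha_1\in H^2_{\mathrm{nr}}(\C(Y)/\C,\Z/3)$ restricts to zero on the function field of every irreducible component of the exceptional set. This is carried out in Lemmas~\ref{l:restriction to divisors} and~\ref{l:restriction to E} by a purely cohomological argument (using that $\alpha_1$ is pulled back from the base, has controlled residues, and that the relevant function fields contain splitting fields for the residual Brauer classes). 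The payoff is that no explicit resolution or fiber-by-fiber $\CH_0$ analysis is needed---which matters here, since the singular locus of the specific example sits over the three singular lines of $S_2$, the twelve isolated singularities of $S_1$, and the curves $D_i\subset S_1\cap S_2$, and a direct $\CH_0$-triviality check would be delicate.

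The second difference concerns the family. You appeal to an abstract Bertini-type argument for the existence of smooth nearby members; the paper instead writes down an explicit one-parameter pencil of cyclic algebras $\mathscr{A}_{[t_0:t_1]}$ interpolating between the singular example at $[1:0]$ and a member at $[1:1]$ whose discriminant consists of two smooth surfaces meeting transversally (Lemma~\ref{l:transversally}), so that smoothness of the total space follows from Maeda's criterion. Flatness of the family over $\mP^1$ is then reduced to checking integrality of the total family, which requires verifying irreducibility of the intermediate discriminant surfaces (Lemmas~\ref{l:singularties of F1G1}, \ref{l:singularties of F2G2}). Your Bertini sketch is plausible in spirit but would need a parameter space in which both smoothness is generically attained and the singular example actually lies---neither of which is automatic for Brauer--Severi surface bundles in the root-stack sense; the paper's explicit pencil sidesteps this.
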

The structure of this paper is as follows: In Section \ref{s:2}, we review some basic facts about Brauer groups and introduce the unramified Brauer group, a stably birational invariant. By definition, this is the subgroup of the Brauer group of the function field, whose elements arise from the Brauer classes of a smooth model. Recently, significant progress has been made using the unramified Brauer group and the specialization method to show that a very general member of certain classes of varieties is not stably rational (\cite[Section~2.1]{MR3821181}).

In Section \ref{s:3}, we generalize a criterion for the stable rationality of conic bundles over 3-folds (\cite[Thm.~2.6]{MR4069651}) to the case of Brauer-Severi surface bundles. This provides a tool to construct an explicit (singular) Brauer-Severi surface bundle over $\mP^3_{\C}$ with a nontrivial unramified Brauer group, which we explore in Section \ref{s:4}. In Section \ref{s:5}, we verify that this example is indeed a Brauer-Severi surface bundle.

In Section \ref{s:6}, we show that our example \ref{e} satisfies the hypotheses required by the specialization method introduced by Voisin in 2014 \cite{MR3359052}, and further developed by Colliot-Thélène and Pirutka in 2016 \cite{MR3481353}. Following Schreieder’s approach \cite[Proposition~26]{MR3909896}, we verify this using a purely cohomological criterion. Finally, in Section \ref{s:7}, we prove Theorem \ref{t:final} by constructing a flat family of Brauer-Severi surface bundles over $\mP^3_{\C}$, where the general member is smooth and includes Example \ref{e} as a member. Detailed calculations are provided separately in Appendix \ref{s:A}.
\subsection{Acknowledgements} I thank my advisor, Professor Rajesh Kulkarni, for his valuable discussions and suggestions on this work. I also extend my gratitude to Prof.~Pirutka, Prof.~Auel, Prof.Kresch and Prof. Schreieder for their comments on an earlier draft. During this project, I was partially supported by NSF grant DMS-2101761.
\section{Background}
\label{s:2}
\subsection{Brauer groups and purity}

For the definition and a detailed treatment of Brauer groups of fields and schemes, see \cite{MR2266528} or \cite{MR4304038}. We use standard conventions in Galois cohomology below. In particular, $$H^i(L, M) = H^i(\textup{Gal}(\bar{L}/L), M).$$

Let $L$ be the function field of an integral scheme $Z$ over the field of complex numbers, $\C$. The Kummer sequence provides a Galois cohomology sequence that identifies the 3-torsion subgroup of $\Br(L)$ with the second Galois cohomology group of $L$ with constant coefficients $\Z/3$:
$$H^2(L,\Z/3)\cong \Br(L)[3].$$

By the Merkurjev-Suslin theorem \cite[Thm.~2.5.7]{MR2266528}, $\Br(L)[3]$ is generated by the equivalence classes of cyclic algebras of order 3. These are denoted by $(a,b)_{\omega}$ and are defined by
$$(a,b)_{\omega}= \left<x,y|x^3=a,y^3=b,xy=\omega yx \right>,$$
where $a,b\in L^{\times}$ and $\omega$ is a primitive third root of unity. 

Now let $\nu$ be a discrete valuation of $L$ with residue field $k(\nu)$, we have the following residue maps \cite[Section~6.8]{MR2266528}:
$$ \partial_{\nu}^{1}: H^1(L,\Z/3)\to H^0(k(\nu),\Z/3)$$ 
$$ \partial_{\nu}^{2}: H^2(L,\Z/3)\to H^1(k(\nu),\Z/3)$$ 
By Kummer theory, we can identify these residue maps as:
$$\partial_{\nu}^{1}: L^{\times}/L^{\times 3}\to \Z/3$$
$$\partial_{\nu}^{2}: \Br(L)[3]\to k(\nu)^{\times}/k(\nu)^{\times 3}$$

\begin{lemma}
\label{l:residue}
With notations as above, the two residue maps are defined by:
$$\partial_{\nu}^{1}([a]) =  \nu(a)(\text{mod} \  3)$$
$$\partial_{\nu}^{2}([(a,b)_{\omega}]) =  (-1)^{\nu(a)\nu(b)}{\frac{a^{\nu(b)}}{b^{\nu(a)}}}\mod  k(\nu)^{\times 3} $$
\end{lemma}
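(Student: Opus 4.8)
The plan is to verify both formulas after reducing to a complete discretely valued field. Since the residue maps are compatible with the flat base change from $L$ to its completion $\hat L_\nu$, and since we work in equal characteristic zero, the Cohen structure theorem identifies $\hat L_\nu \cong k(\nu)(\!(t)\!)$ for a uniformizer $t$; it therefore suffices to prove the formulas over $k(\nu)(\!(t)\!)$. Throughout I use the chosen primitive third root of unity $\omega$ to fix the isomorphisms $\mu_3 \cong \Z/3$, hence $H^1(-,\Z/3) \cong (-)^{\times}/(-)^{\times 3}$ and $H^2(L,\mu_3^{\otimes 2}) \cong H^2(L,\Z/3) \cong \Br(L)[3]$, over both $L$ and $k(\nu)$. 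I would also record once and for all the defining property of the residue coming from the localization sequence of the (complete, hence henselian) discrete valuation ring $\mathcal O_\nu$: the map $\partial_\nu^1$ is the quotient of $H^1(L,\mu_3)$ by the image of $H^1(\mathcal O_\nu,\mu_3)$, normalized so that $\partial_\nu^1[t]=1$, and $\partial_\nu^2$ is the analogous connecting map in degree two.

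For $\partial_\nu^1$: over $k(\nu)(\!(t)\!)$ we have $\hat L_\nu^{\times} = t^{\Z}\cdot\mathcal O_\nu^{\times}$, so modulo cubes any $a$ is congruent to $u\,t^{m}$ with $u \in \mathcal O_\nu^{\times}$ and $m \equiv \nu(a)\pmod 3$. The class $[u]$ lies in the image of $H^1(\mathcal O_\nu,\mu_3)$, hence is unramified and killed by $\partial_\nu^1$, while $\partial_\nu^1[t]=1$ by the normalization; additivity gives $\partial_\nu^1([a]) = \nu(a)\bmod 3$.

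For $\partial_\nu^2$: the cyclic algebra $(a,b)_\omega$ represents the cup product $[a]\cup[b] \in H^2(L,\mu_3^{\otimes 2})$, identified with an element of $\Br(L)[3]$ via $\omega$. I would then compute via the basic residue relations for degree-one classes,
$$\partial_\nu^2\big([u_1]\cup[u_2]\big)=0,\qquad \partial_\nu^2\big([u]\cup[t]\big)=-\,\overline u,\qquad \partial_\nu^2\big([t]\cup[t]\big)=\overline{-1}=0,$$
valid for units $u,u_1,u_2$ (with $\overline u$ the image of $u$ in $k(\nu)^{\times}/k(\nu)^{\times 3}$); the first two follow from the Leibniz-type rule for residues of cup products and unramifiedness of $[u_i]$, and the third from the Steinberg relation $(t,-t)_\omega=0$ together with $[-t]=[-1]+[t]$, noting $-1=(-1)^3$. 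Writing $a=u_a\,t^{\nu(a)}$, $b=u_b\,t^{\nu(b)}$ and expanding $[a]\cup[b]$ bilinearly, only the mixed unit–uniformizer terms survive and assemble to $\overline{u_a}^{\,\nu(b)}\,\overline{u_b}^{\,-\nu(a)}$; since $a^{\nu(b)}/b^{\nu(a)} = u_a^{\nu(b)}/u_b^{\nu(a)}$ is a unit of $\mathcal O_\nu$, this equals $(-1)^{\nu(a)\nu(b)}\,\overline{a^{\nu(b)}/b^{\nu(a)}}$, the extra sign being a cube and hence trivial modulo $k(\nu)^{\times 3}$.

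The only genuinely delicate point is the bookkeeping of signs and twists: one must pin down the identification of $(a,b)_\omega$ with $[a]\cup[b]$ compatibly with the chosen $\omega$, and the precise orientation conventions in the localization sequences defining $\partial_\nu^1$ and $\partial_\nu^2$, so that the stated $(-1)^{\nu(a)\nu(b)}$ comes out on the nose. Since for $\Z/3$-coefficients that sign is in any case trivial, one could suppress it; I would nonetheless keep track of it to match the classical tame symbol, and refer to \cite[Section~6.8]{MR2266528} for the residue formalism and the symbol computation.
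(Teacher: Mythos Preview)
Your argument is correct and is essentially the standard proof of the tame symbol formula. The paper, by contrast, gives no argument at all: its proof is the single line ``See \cite[Thm.~2.18]{MR3616009}.'' So you are not reproducing the paper's reasoning but supplying the proof that the cited reference contains.

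A few small remarks on execution. Your intermediate claim $\partial_\nu^2([u]\cup[t])=-\bar u$ does not match the target formula, which for $a=u$, $b=t$ gives $\bar u$; yet your final assembled answer $\bar u_a^{\,\nu(b)}\bar u_b^{\,-\nu(a)}$ is the right one. You flag this yourself under ``bookkeeping of signs and twists,'' and indeed in $\Z/3$-coefficients the discrepancy is invisible since $-1$ is a cube, but if you want the formula to come out on the nose you should fix the sign of that building block (or, equivalently, pin down whether $(a,b)_\omega$ corresponds to $[a]\cup[b]$ or $[b]\cup[a]$ under your chosen identification $\mu_3\cong\Z/3$). Also, the passage to the completion is better justified by noting that $\mathcal O_\nu\to\widehat{\mathcal O}_\nu$ is an unramified extension of DVRs with the same residue field, so the two residue maps literally agree after restriction along $H^i(L)\to H^i(\widehat L_\nu)$; calling this ``compatibility with flat base change'' is slightly imprecise. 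Neither point affects the correctness of the argument.
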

\begin{proof}
See \cite[Thm.~2.18]{MR3616009}
\end{proof}

\begin{definition}
\label{d:unramified}
    The 3-torsion of unramified Brauer group of a field $L$ over another field $k$, denoted by $H^2_{nr}(L/k,\Z/3\Z)$ or $\Br_{nr}(L/k)[3]$, is the intersection of kernels of all residue maps $\partial_{\nu}^2$, where $\nu$ take values in all divisorial valuations of $L$ which are trivial on $k$. 
\end{definition}

From the discussions in \cite{MR1327280} and \cite[Corollary~6.2.10]{MR4304038}, the unramified Brauer group of L is a stably birational invariant for any model X of L, whether the model is nonsingular or singular. Here, a model X of L refers to an integral projective variety with function field L. If the model is nonsingular, we have the following:

\begin{lemma}
Let $k$ be a field with characteristic not 3. Let $X$ be a regular, proper, integral variety over $k$ with function field $L$, then we have
$$\Br(X)[3]\cong \Br_{nr}(L/k)[3]$$
\end{lemma}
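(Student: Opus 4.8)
The plan is to establish the two inclusions $\Br(X)[3] \subseteq \Br_{nr}(L/k)[3]$ and $\Br_{nr}(L/k)[3] \subseteq \Br(X)[3]$ as subgroups of $\Br(L)[3]$. This already uses that the restriction map $\Br(X) \to \Br(L)$ is injective, which holds for any regular integral scheme (see, e.g., \cite{MR4304038}), so that $\Br(X)[3]$ may be regarded as a subgroup of $\Br(L)[3]$.

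For the inclusion $\Br(X)[3] \subseteq \Br_{nr}(L/k)[3]$, fix $\alpha \in \Br(X)[3]$ and let $\nu$ be an arbitrary divisorial valuation of $L$ trivial on $k$, with valuation ring $R_\nu$, a discrete valuation ring. Since $X$ is proper over $k$, the valuative criterion of properness furnishes a center for $\nu$: a point $z \in X$ such that $R_\nu$ dominates the local ring $\mathcal{O}_{X,z}$ inside $L$. Restriction along $\Spec \mathcal{O}_{X,z} \to X$ sends $\alpha$ to a class $\alpha_z \in \Br(\mathcal{O}_{X,z})$ whose image in $\Br(L)$ is $\alpha$; the ring inclusion $\mathcal{O}_{X,z} \hookrightarrow R_\nu$ then carries $\alpha_z$ to a class $\beta \in \Br(R_\nu)$ whose image in $\Br(L)$ is again $\alpha$, and since $\Br(R_\nu) \to \Br(L)$ is injective and $\alpha$ is $3$-torsion, $\beta \in \Br(R_\nu)[3]$. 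Now purity for the Brauer group of a discrete valuation ring gives an exact sequence
$$0 \longrightarrow \Br(R_\nu)[3] \longrightarrow \Br(L)[3] \xrightarrow{\partial_\nu^2} H^1(k(\nu),\Z/3),$$
valid because $k(\nu)$, like $k$, has characteristic $\neq 3$; hence $\partial_\nu^2(\alpha) = 0$. As $\nu$ was arbitrary, $\alpha \in \Br_{nr}(L/k)[3]$.

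For the reverse inclusion I would invoke purity for the Brauer group of the regular scheme $X$ itself, namely exactness of
$$0 \longrightarrow \Br(X)[3] \longrightarrow \Br(L)[3] \xrightarrow{(\partial_x^2)_x} \bigoplus_{x \in X^{(1)}} H^1(k(x),\Z/3),$$
where $x$ runs over the codimension-one points of $X$. This is the $3$-torsion part of Gabber's absolute purity theorem, available since $k$ has characteristic $\neq 3$; for $\dim X \le 3$, the range relevant to the rest of this paper, it was known earlier. Each $x \in X^{(1)}$ determines a divisorial valuation of $L$ trivial on $k$, so any $\alpha \in \Br_{nr}(L/k)[3]$ in particular satisfies $\partial_x^2(\alpha) = 0$ for all such $x$, and therefore comes from $\Br(X)[3]$ by exactness. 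Combined with the previous paragraph, this gives the isomorphism $\Br(X)[3] \cong \Br_{nr}(L/k)[3]$.

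The only genuinely nontrivial ingredient --- and the step I expect to be the main obstacle --- is purity for Brauer groups of regular schemes, used both one-dimensionally (for $R_\nu$) and in its global form (for $X$); the remainder is formal bookkeeping with restriction maps together with the valuative criterion of properness, which supplies the center invoked above. In characteristic zero, hence in every application in this paper, the global purity statement can alternatively be deduced from resolution of singularities and the one-dimensional case, but citing Gabber's theorem is cleaner and imposes no restriction on $\dim X$.
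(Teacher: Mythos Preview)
Your proof is correct. The paper itself does not give an argument: it simply cites \cite[Prop.~3.7.8]{MR4304038} and moves on. What you have written is essentially the standard proof of that proposition --- properness gives a center for every divisorial valuation, so classes in $\Br(X)$ are unramified everywhere, and conversely Gabber's purity for the regular scheme $X$ shows that being unramified along the codimension-one points of $X$ already forces a class to come from $\Br(X)$. So there is no genuine difference in route; you have unpacked what the paper leaves inside a citation.
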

\begin{proof}
This is a direct result of \cite[Prop.~3.7.8]{MR4304038}   
\end{proof}

Given a 3-torsion Brauer class in the Brauer group of the function field L, it is not easy to determine whether it belongs to the unramified subgroup using the definition alone, as there are usually too many divisorial valuations to consider. In \cite{MR1327280}, several theorems are established that reduce the number of valuations needed to check whether a Brauer class is unramified. Specifically, it suffices to check valuations corresponding to prime divisors in a smooth model. This result follows from the {\it purity} property of unramified Brauer groups. For more details on these theorems, see \cite{MR1327280} and \cite[Section3.7]{MR4304038}. A similar discussion can be found in \cite[Section2]{MR4069651}.

\subsection{Brauer-Severi surface bundles}

We use the definition of Brauer-Severi surface bundles from \cite[Def.~4.1]{MR3993007}:
\begin{definition}
\label{d:bundle}
Let $B$ be a locally Noetherian scheme, in which 3 is invertible in the local rings. A {\it Brauer-Severi surface bundle} over $B$ is a flat projective morphism $\pi: Y \to B$ such that the fiber over every geometric point of $B$ is isomorphic to one of the following:
\begin{itemize}
    \item $\mP^2$
    \item The union of three standard Hirzebruch surfaces $\mathbb{F}_1$, meeting transversally, such that any pair of them meets along a fiber of one and the $(-1)$-curve of the other.
    \item An irreducible scheme whose underlying reduced subscheme is isomorphic to the cone over a twisted cubic curve. 
\end{itemize}
\end{definition}

 Let $\pi: Y\to B$ be a Brauer-Severi surface bundle over a smooth projective rational threefold $B$ over a field $k$ whose generic fiber is smooth, and let $S=\{b\in B|\pi^{-1}(b) \  \text{is singular}\}$ denote its discriminant locus. We consider $S$ with its reduced closed subscheme structure in $B$, and since $B$ is Noetherian, $S$ consists of finitely many irreducible components, say $S_1,\cdots,S_n$.

In the case of conic bundles considered in \cite[Def.~2.4]{MR4069651}, the authors focused on a special kind of conic bundles with a \textit{good} discriminant locus. We will generalize the definition of a \textit{good} discriminant locus to Brauer-Severi surface bundles in this context:

\begin{definition}
\label{d:good}
    We say the discriminant locus $S$ is {\it good} if the following conditions are satisfied:
    \begin{enumerate}
   
    \item Each irreducible component of $S$ is reduced. (This is assumed above.)
    \item The fiber $Y_s$ over a general $s\in S_i$ for each irreducible component $S_i$ is geometrically the union of three standard Hirzebruch surfaces $\mathbb{F}_1$ described in Definition \ref{d:bundle}.
    \item The natural triple cover of $S_i$ induced by  $\pi: Y\to B$ is irreducible.
    \item By $(3)$, the fiber $Y_{F_{S_i}}$ over the generic point of $S_i$ is irreducible. Thus, there is a natural map $\tau$ from the cubic classes of function field of $S_i$ to the cubic classes of function field of $Y_{F_{S_i}}$. We assume the cubic extension over the function field of $S_i$ induced by $\pi: Y\to B$ generates the kernel of $\tau$.
    
    \end{enumerate}
\end{definition}
\begin{remark}
The last requirement is automatically satisfied in the case of conic bundles with the suitable definition given in \cite[Thm.~2.6]{MR4069651}. However, in our case, this is not generally true. Note that the example we provided meets all these requirements (See Example \ref{e}).
\end{remark}
\begin{remark}
By Lemma \ref{l:residue}, those $S_i$ are precisely those irreducible surfaces such that the Brauer class of the generic fiber have a nontrivial residue along $S_i$. In particular, the discriminant locus is a divisor of the base with pure-dimensional irreducible components.
\end{remark}

We end this section with a lemma that generalizes \cite[Lemma~2.3]{MR4069651} to the 3-torsion case:
\begin{lemma}
\label{l:ses}
Let $S$ be a smooth nonsplit Brauer-Severi surface over an arbitrary field $K$ (in particular, $S_{\bar{K}}\cong \mP^2_{\bar{K}}$). Then the pullback map $\Br(K)\to \Br(S)$ induces an exact sequence:
$$0\to \Z/3\to \Br(K)\to \Br(S)\to 0,$$
where the kernel is generated by the Brauer class $\alpha\in \Br(K)[3]$ determined by $S$.
Furthermore, if characteristic of $K$ $\neq 3$ and $K$ contains a primitive $9$-th root of unit, then the above exact sequence restricts to :
$$0\to \Z/3\to \Br(K)[3]\to \Br(S)[3]\to \Z/3\to 0$$
\end{lemma}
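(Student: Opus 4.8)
The plan is to deduce both exact sequences from the Leray (Hochschild--Serre) spectral sequence for $\mathbb{G}_m$ along the structure morphism $S\to\Spec K$, exactly as in the $2$-torsion case treated in \cite[Lemma~2.3]{MR4069651}, and then to pass to $3$-torsion by a diagram chase. First I would write out the low-degree exact sequence of $E_2^{p,q}=H^p(K,H^q(S_{\bar K},\mathbb{G}_m))\Rightarrow H^{p+q}_{\textup{\'et}}(S,\mathbb{G}_m)$. Because $S$ is a smooth projective $K$-form of $\mP^2$, one has $\Pic(S_{\bar K})=\Pic(\mP^2_{\bar K})\cong\Z$ with trivial Galois action (the ample generator is canonical), $\Br(S_{\bar K})=\Br(\mP^2_{\bar K})=0$, and $H^2_{\textup{\'et}}(S,\mathbb{G}_m)=\Br(S)$ since $S$ is regular; combined with $H^1(K,\mathbb{G}_m)=0$ (Hilbert~90) and $H^1(K,\Z)=0$ this collapses the seven-term sequence to
$$0\to\Pic(S)\to\Z\xrightarrow{\ \delta\ }\Br(K)\to\Br(S)\to0.$$
The image of $\delta$ equals $\ker(\Br(K)\to\Br(S))$, which by Amitsur's theorem for Brauer--Severi varieties is the cyclic group $\langle\alpha\rangle$ generated by the class $\alpha=[A]$ of the degree-$3$ algebra $A$ with $S=\mathrm{SB}(A)$. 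Since $S$ is nonsplit, $A$ is a division algebra, so $\alpha$ has order exactly $3$; hence $\ker\delta=3\Z$ (so $\Pic(S)=\Z\cdot\mathcal{O}_S(3)$) and the display becomes the first claimed sequence $0\to\Z/3\to\Br(K)\to\Br(S)\to0$ with kernel $\langle\alpha\rangle$.

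For the second sequence I would first prove the key divisibility: when $\operatorname{char}K\neq3$ and $\mu_9\subset K$, the class $\alpha$ lies in $3\,\Br(K)$. By Wedderburn's theorem a degree-$3$ central division algebra is cyclic, so $A\cong(a,b)_\omega$ with $a,b\in K^\times$ and $\omega=\omega_9^3$ a primitive cube root of unity. Comparing the cup products defining $[(a,b)_{\omega_9,9}]\in\Br(K)[9]$ and $[(a,b)_{\omega,3}]\in\Br(K)[3]$ under the power maps $\mu_9\xrightarrow{z\mapsto z^3}\mu_3$ on coefficients and $x\mapsto x^3$ on $\mathbb{G}_m$ yields $[(a,b)_{\omega,3}]=3\,[(a,b)_{\omega_9,9}]$ in $\Br(K)$, so $\alpha=3\beta$ with $\beta=[(a,b)_{\omega_9,9}]$. (Alternatively, invoking Merkurjev--Suslin to write any class of $\Br(K)[3]$ as a sum of such symbols and lifting each, one even gets $\Br(K)[3]\subseteq3\,\Br(K)$.)

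Next I would apply the snake lemma to multiplication by $3$, viewed as an endomorphism of the short exact sequence $0\to\langle\alpha\rangle\to\Br(K)\to\Br(S)\to0$. As $3$ annihilates $\langle\alpha\rangle\cong\Z/3$, this yields the six-term exact sequence
$$0\to\Z/3\to\Br(K)[3]\to\Br(S)[3]\to\Z/3\xrightarrow{\ h\ }\Br(K)/3\,\Br(K)\to\Br(S)/3\,\Br(S)\to0,$$
in which $h$ sends a generator to $\alpha\bmod 3\,\Br(K)$. By the divisibility above $h=0$, so the sequence truncates to the desired $0\to\Z/3\to\Br(K)[3]\to\Br(S)[3]\to\Z/3\to0$ (the first $\Z/3$ still being $\langle\alpha\rangle$).

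I expect the spectral-sequence computation and the snake-lemma bookkeeping to be routine; the one genuinely substantive point is the divisibility $\alpha\in3\,\Br(K)$, and this is precisely where the hypothesis $\mu_9\subset K$ is indispensable --- without it, $\times3\colon\Br(K)[9]\to\Br(K)[3]$ need not hit $\alpha$ and the connecting map $h$ could be nonzero. The only delicate part there is tracking Tate twists; the cleanest route is to work in $H^2(K,\mu_9^{\otimes2})$ and $H^2(K,\mu_3^{\otimes2})$, observe that $\mu_9\to\mu_3$ induces the natural surjection $K^\times/K^{\times9}\to K^\times/K^{\times3}$ on degree-one classes, and only compose with the chosen roots of unity at the very end.
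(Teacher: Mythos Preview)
Your argument is correct. The first exact sequence is obtained exactly as in the paper, via the Hochschild--Serre spectral sequence together with Amitsur's theorem and the vanishing of $\Br(\mP^2_{\bar K})$ and $H^1(K,\Z)$.

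For the second sequence, both you and the paper reduce to a snake-lemma argument whose crux is the divisibility statement $\alpha\in 3\Br(K)$ (equivalently, surjectivity of $\times 3\colon\Br(K)[9]\to\Br(K)[3]$), and both identify this as the place where the hypothesis $\mu_9\subset K$ enters. The implementations differ in two respects. First, the paper proves the stronger statement $\Br(K)[3]\subset 3\Br(K)$ by showing that the boundary map $\partial^2\colon H^2(K,\mu_3)\to H^3(K,\mu_3)$ for the sequence $1\to\mu_3\to\mu_9\to\mu_3\to 1$ vanishes, using that $[\omega]=0$ in $K^\times/K^{\times 3}$ together with the Merkurjev--Suslin theorem; you instead use Wedderburn's classical result that every degree-$3$ division algebra is cyclic and then lift the single symbol $(a,b)_\omega$ explicitly to $(a,b)_{\omega_9,9}$, which is more elementary and avoids Merkurjev--Suslin entirely. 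Second, the paper feeds the snake lemma the comparison diagram between the rows $0\to\Br(-)[3]\to\Br(-)[9]\to\Br(-)[3]\to 0$ for $K$ and for $S$, while you apply the snake lemma directly to multiplication by $3$ on the already-established short exact sequence $0\to\langle\alpha\rangle\to\Br(K)\to\Br(S)\to 0$; your route is shorter and makes the role of the connecting map $h$ (and hence of the divisibility hypothesis) completely transparent. Your remark about tracking the Tate twists when comparing $(a,b)_{\omega_9,9}$ and $(a,b)_{\omega,3}$ is well taken; once $\omega=\omega_9^3$ is fixed, the compatibility $3[(a,b)_{\omega_9,9}]=[(a,b)_{\omega,3}]$ follows from the commuting square between $H^2(K,\mu_9^{\otimes 2})$ and $H^2(K,\mu_3^{\otimes 2})$ under the cube map.
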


\begin{proof}
We first prove the exactness of the first sequence. Recall that the kernel $\textup{ker}(\Br(K)\to \Br(S))$ is given by Amitsur's theorem \cite[Thm.~5.4.1]{MR2266528}. To show surjectivity of $\Br(K)\to \Br(S)$, consider the separable closure $K^s$ of $K$, and let $\Gamma=\textup{Gal}(K^s/K)$. Then we have the Hochschild-Serre spectral sequence 
$$H^p(\Gamma,H^q(S_{K^s},\mathbb{G}_m))\Rightarrow H^{p+q}(S,\mathbb{G}_m).$$
The low degree exact sequence reads 
$$0\to \Pic(S)\to \Pic(S_{K^s})^{\Gamma}\to \Br(K)\to \ker\left(\Br(S)\to \Br(S_{K^s})^{\Gamma}\right)\to H^1(\Gamma,\Z)$$
By the definition of a Brauer-Severi Variety, we know 
$S_{K^s}\cong \mP^2_{K^s}$.
Hence we have $\Br(S_{K^s})\cong \Br(K^s)=0$. Since we clearly have $H^1(\Gamma,\Z)=0$, it follows that 
$\Br(K)\to \Br(S)$ is surjective.

For the second part, consider any $a\in \Br(S)[3]$. There exists a lift $a'\in \Br(K)$ of $a$ such that $3a'\mapsto 0\in \Br(S)$. Therefore, $3a'\in \ker\left(\Br(K)\to \Br(S)\right)\cong \Z/3$.
In other words, $9a'=0\in \Br(K)$. Hence, $\Br(K)[9]$ surjects onto $\Br(S)[3]$. Notice that by the description of the kernel in Amitsur's theorem, we clearly have 
$$\ker\left(\Br(K)[3]\to \Br(S)[3]\right) \cong \Z/3.$$
To compute the cokernel, consider the short exact sequence of trivial $\Gamma$-modules:
$$1\to \mu_3\to \mu_9\to \mu_3\to 1,$$
and its associated long exact sequence:
$$\cdots\to H^1(K,\mu_3)\xrightarrow[]{\partial^1}H^2(K,\mu_3)\to H^2(K,\mu_9)\to H^2(K,\mu_3)\xrightarrow[]{\partial^2}H^3(K,\mu_3)\to \cdots$$
We claim the boundary maps $\partial^1,\partial^2$ in the above exact sequence are zero. Indeed, let $\omega$ be a primitive third root of unit. By the proof of \cite[Lemma 4.1]{44267b53-f4ad-340a-8e65-df1c74888743}, $\partial^1$ is given by the cup product with $[\omega]\in H^1(K,\Z/3)$. By our assumption, $\omega$ is a cube in $K$, hence $[\omega]=0\in H^1(K,\Z/3)\cong K^*/(K^*)^3$. This shows $\partial^1=0$. 

To show $\partial^2=0$, consider the following commutative diagram given in \cite[Lemma~7.5.10]{MR2266528}:

\[
  \begin{tikzcd}
    \mu_3\otimes K^M_{2}(K)\arrow[swap]{d}{\omega\cup h^2_{K,3}} \arrow{r}{\{,\}} & K^M_{3}(K)/3K^M_{3}(K) \arrow[swap]{d}{h^3_{K,3}}\\
    H^2(K,\mu_3^{\otimes 3})\arrow{r}{\tilde{\partial^2}} & H^3(K,\mu_3^{\otimes 3}) 
  \end{tikzcd}
\]
Notations in the above diagram are explained in \cite[Lemma~7.5.10]{MR2266528}. Notice since $\mu_3$ are trivial $\Gamma$-modules, we have the following isomorphism (\cite[Lemma 4.1]{44267b53-f4ad-340a-8e65-df1c74888743}):
$$\phi^i_j:H^i(K,\Z/3)\cong H^i(K,\mu_{3}^{\otimes j})$$
$$\alpha\mapsto \alpha\cup\underbrace{(\omega\cup\cdots\cup \omega)}_{j \text{ copies}}$$

Since the upper horizontal map in the commutative diagram is given by the symbol product with $0=[\omega]\in H^1(K,\Z/3)$, it follows that this map is 0. By the Merkurjev-Suslin theorem (\cite[Theorem~8.6.5]{MR2266528}), the left vertical map is surjective, hence $\tilde{\partial^2}=0$. Given that cup products "commute" with boundary homomorphisms (\cite[Proposition~1.4.3]{neukirch2008cohomology}), we have the commutative diagram involving $\partial^2$:
\[
  \begin{tikzcd}
    H^2(K,\mu_3^{\otimes 3})\arrow[swap]{d}{(\phi^2_3)^{-1}}\arrow{r}{\tilde{\partial^2}} & H^3(K,\mu_3^{\otimes 3})\arrow[swap]{d}{(\phi^3_3)^{-1}} \\
    H^2(K,\Z/3)\arrow{r}{\partial^2} & H^3(K,\Z/3)
  \end{tikzcd}
\]
From this, it is clear that $\partial^2=0$.

Next notice that $K$ has characteristic $\neq 3$, so we have 
$\Br(K)[n]\cong H^2(K,\mu_n)$ when $n$ is a power of 3.
Consider the following commutative diagram:
\[
  \begin{tikzcd}
    0 \arrow{r} & \Br(K)[3]\arrow[swap]{d}{\sigma_3} \arrow{r} & \Br(K)[9] \arrow[swap]{d}{\sigma_9} \arrow{r} & \Br(K)[3] \arrow[swap]{d}{\sigma_3}\arrow{r} & 0 \\
    0\arrow{r} & \Br(S)[3]\arrow{r} & \Br(S)[9] \arrow{r} & \Br(S)[3]
  \end{tikzcd}
\]
Then the snake lemma gives us 
$$\ker(\sigma_9) \xrightarrow{\phi_1}\ker(\sigma_3)\to\coker(\sigma_3)\xrightarrow{\phi_2}\coker(\sigma_9).$$

Since $\phi_1$ is multiplication by 3 and $\ker(\sigma_9) \cong \Z/3$, $\phi_1  = 0$. The map $\phi_2$ is also zero as $\Br(K)[9]$ maps onto $\Br(S)[3]$. Hence we have 
$\coker(\sigma_3) \cong \ker(\sigma_3) \cong \Z/3.$
\end{proof}
\section{Brauer groups of Brauer-Severi surface bundles}
\label{s:3}
In this section, we present sufficient conditions under which a Brauer-Severi surface bundle 5-fold is not stably rational. These conditions are derived by generalizing \cite[Thm.~2.6]{MR4069651} to the 3-torsion case. However, the details in these two cases are quite different.

\begin{theorem}
\label{t:main}
Let $k$ be an algebraically closed field of characteristic $\neq 3$ and let $\pi:Y\to B$ be a Brauer-Severi surface bundle over a smooth projective threefold $B$ over $k$ with a smooth generic fiber. Assume $\Br(B)[3]=0$ and $H^3_{\text{ét}}(B,\Z/3)=0$. (For example, take $B=\mP^3$.) Let $\alpha\in \Br(K)[3]$ be the Brauer class over $K=k(B)$ corresponding to the generic fiber of $\pi$, and it can be represented by a cyclic algebra of index 3. Assume the discriminant locus is good [Def. \ref{d:good}] with irreducible components $S_1,\cdots,S_n$. Further suppose the following conditions also hold:
\begin{enumerate}
\item Any irreducible curve in $B$ is contained in at most two surfaces from the set $\left\{S_1,\cdots,S_n\right\}$.
\item Through any point of $B$, there pass at most three surfaces from the set $\left\{S_1,\cdots,S_n\right\}$.
\item  For all $i\neq j$, $S_i$ and $S_j$ are factorial at every point of $S_i\cap S_j$.
\end{enumerate}

Let $\gamma_i=\partial^2_{S_i}(\alpha)\in H^1(k(S_i),\Z/3)$. Let $\Gamma$ be the subgroup of $\bigoplus_{i=1}^{n}H^1(k(S_i),\Z/3)$ given by 
$\Gamma=\bigoplus_{i=1}^{n}\left<\gamma_i\right>\cong (\Z/3)^n$. We write elements of $\Gamma$ as $(x_1,x_2\cdots,x_n)$ with $x_i\in \{0,1,2\}$.

Let $H\subset \Gamma$ consist of those elements $(x_1,\cdots,x_n)\in (\Z/3)^n$ such that $x_i=x_j$ whenever there exists an irreducible component $C$ of $S_i\cap S_j$, such that 
$$(\partial^1_{C}(\gamma_i),\partial^1_{C}(\gamma_j))= (1,2)\ \textup{or} \ (2,1).$$
Let $H'\subset H$ be the subgroup consisting of elements $(x_1,\cdots,x_n)\in (\Z/3)^n$ such that $x_i=x_j$ whenever there exists an irreducible components $C$ of $S_i\cap S_j$, such that 
$$(\partial^1_{C}(\gamma_i),\partial^1_{C}(\gamma_j))=(0,0), \text{ and } 
 \gamma_i|_{C}  \text{ and }  \gamma_j|_{C} \text{ are not both trivial in } H^1(k(C),\Z/3).$$
Then $H^2_{nr}(k(Y)/k,\Z/3)$ contains the subquotient $H'/\left<1,\cdots,1\right>$.
\end{theorem}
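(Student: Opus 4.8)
The plan is to realize $H'/\langle(1,\dots,1)\rangle$ inside $H^2_{nr}(k(Y)/k,\Z/3)$ as the image of a homomorphism sending a ``residue profile'' $x\in H'$ to the pullback along $\pi$ of a Brauer class on $K=k(B)$ with prescribed ramification. First I would set up two reductions. Since $k(Y)$ is the function field of the generic fibre $Y_K$ — a smooth Brauer--Severi surface over $K$ with associated class $\alpha$ — Amitsur's theorem (used already in the proof of Lemma~\ref{l:ses}) together with the inclusion $\Br(Y_K)\hookrightarrow\Br(k(Y))$ gives
\[
\ker\!\big(\Br(K)[3]\longrightarrow\Br(k(Y))[3]\big)=\langle\alpha\rangle\cong\Z/3 .
\]
On the side of the base, the Bloch--Ogus coniveau machinery applies to the smooth threefold $B$: the Gersten complex
\[
H^2(K,\Z/3)\xrightarrow{\ \oplus\,\partial^2_D\ }\bigoplus_{D\in B^{(1)}}H^1\!\big(k(D),\Z/3\big)\xrightarrow{\ \oplus\,\partial^1_C\ }\bigoplus_{C\in B^{(2)}}H^0\!\big(k(C),\Z/3\big)
\]
has left homology $\Br_{nr}(K/k)[3]=H^0_{\mathrm{Zar}}(B,\mathscr H^2)$ and middle homology $H^1_{\mathrm{Zar}}(B,\mathscr H^2)$. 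The hypothesis $\Br(B)[3]=0$ forces $\Br_{nr}(K/k)[3]=0$, and the hypothesis $H^3_{\text{ét}}(B,\Z/3)=0$ forces $H^1_{\mathrm{Zar}}(B,\mathscr H^2)=0$: in the coniveau spectral sequence $E_2^{p,q}=H^p_{\mathrm{Zar}}(B,\mathscr H^q)\Rightarrow H^{p+q}_{\text{ét}}(B,\Z/3)$ no differential enters $E_2^{1,2}$, and the differentials leaving it land in $H^3_{\mathrm{Zar}}(B,\mathscr H^1)$ (zero, since $\mathscr H^1$ has a length-one Gersten resolution) and in a subquotient of $H^4_{\mathrm{Zar}}(B,\Z/3)$ (zero, since $\dim B=3$), so $E_2^{1,2}=E_\infty^{1,2}$ is a subquotient of $H^3_{\text{ét}}(B,\Z/3)=0$. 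The conclusion is: a family of residues $(\eta_D)_D$ is the residue datum of a class in $\Br(K)[3]$ if and only if it is a Gersten $2$-cocycle, and in that case the class is unique.

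Next, for each $x=(x_1,\dots,x_n)\in H$ I would produce the unique $\beta_x\in\Br(K)[3]$ with $\partial^2_{S_i}(\beta_x)=x_i\gamma_i$ for all $i$ and $\partial^2_{D}(\beta_x)=0$ for every prime divisor $D\notin\{S_1,\dots,S_n\}$. The cocycle condition is verified at each curve $C\in B^{(2)}$. If $C$ lies in no $S_i$ the contribution is $0$; if $C$ lies in exactly one $S_i$ the contribution is $x_i\,\partial^1_C(\gamma_i)$, and $\partial^1_C(\gamma_i)=0$ automatically, because $\alpha$ itself is a Gersten cocycle whose residue datum is supported on the $S_j$ (the Remark after Definition~\ref{d:good}). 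If $C$ lies in exactly two components $S_i,S_j$ — condition (1) forbids three — then applying the cocycle relation to $\alpha$ gives the reciprocity $\partial^1_C(\gamma_i)=-\partial^1_C(\gamma_j)$ (a direct consequence of the residue formula of Lemma~\ref{l:residue}), and plugging this into the $C$-component of the Gersten differential of $\beta_x$'s residue datum produces a scalar multiple of $(x_i-x_j)$ that is nonzero precisely when $\partial^1_C(\gamma_i)\neq0$ and $x_i\neq x_j$; since $\partial^1_C(\gamma_i)\neq0$ is exactly the case $(\partial^1_C(\gamma_i),\partial^1_C(\gamma_j))=(1,2)$ or $(2,1)$, the cocycle condition is exactly ``$x\in H$''. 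Uniqueness of $\beta_x$ makes $x\mapsto\beta_x$ additive, so $\phi\colon H\to\Br(k(Y))[3]$, $\phi(x)=\pi^*\beta_x$, is a homomorphism; and $\beta_{(1,\dots,1)}=\alpha$ by uniqueness, so $\phi(1,\dots,1)=\pi^*\alpha=0$.

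The heart of the argument is then to show $\phi(H')\subseteq H^2_{nr}(k(Y)/k,\Z/3)$, i.e.\ that $\pi^*\beta_x$ is unramified on a smooth projective model $Y'$ of $Y$ whenever $x\in H'$. I would organize the prime divisors of $Y'$ by their image in $B$. Over a prime divisor $D\notin\{S_i\}$ the residue of $\pi^*\beta_x$ is a multiple of the restriction of $\partial^2_D(\beta_x)=0$. Over $S_i$, goodness condition (4) identifies $\langle\gamma_i\rangle=\ker\tau$ with the kernel of restriction to the function field of the irreducible fibre $Y_{F_{S_i}}$, i.e.\ of the prime divisor of $Y$ over $S_i$; since $\partial^2_{S_i}(\beta_x)=x_i\gamma_i\in\langle\gamma_i\rangle$ it dies there, so $\pi^*\beta_x$ is unramified along it (this uses only $x\in H$ and the fact that, for the standard models of Definition~\ref{d:bundle}, $Y$ is already smooth over the generic point of $S_i$, so no new divisor over $S_i$ is created). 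The remaining prime divisors of $Y'$ are $\pi$-exceptional; since $\operatorname{Sing}(Y)$ maps into the locus where components of the discriminant meet, they lie over curves $C$ that are irreducible components of some $S_i\cap S_j$, or over the finitely many points where several components cross. For these one carries out an explicit local resolution of the Brauer--Severi surface bundle over $B$ — using condition (3) so that $S_i,S_j$ are factorial along such a $C$ (hence $C$ is a well-behaved blow-up centre) and condition (2) so that at most three components pass through any point — and computes the residue of $\pi^*\beta_x$ on each exceptional divisor in terms of the flag data $(\partial^1_C(\gamma_i),\partial^1_C(\gamma_j))$ and the restrictions $\gamma_i|_C,\gamma_j|_C$. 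The outcome, spelled out in Appendix~\ref{s:A}, should be that this residue vanishes exactly when $x_i=x_j$ in the two situations that define $H'$ — namely $(\partial^1_C(\gamma_i),\partial^1_C(\gamma_j))$ equal to $(1,2)$ or $(2,1)$, and equal to $(0,0)$ with $\gamma_i|_C,\gamma_j|_C$ not both trivial — and vanishes automatically otherwise. Hence $x\in H'$ forces $\pi^*\beta_x$ to be unramified.

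Finally, $\phi(x)=\pi^*\beta_x=0$ iff $\beta_x\in\langle\alpha\rangle=\langle\beta_{(1,\dots,1)}\rangle$, which by uniqueness of the $\beta$'s holds iff $x\in\langle(1,\dots,1)\rangle$; as $(1,\dots,1)\in H'$, the restriction $\phi|_{H'}$ has kernel exactly $\langle(1,\dots,1)\rangle$ and hence induces an injection $H'/\langle(1,\dots,1)\rangle\hookrightarrow H^2_{nr}(k(Y)/k,\Z/3)$, which is the claim. I expect the main obstacle to be the last part of the third step: the explicit local resolution of the bundle over the curves $S_i\cap S_j$ and over the multiple points of the discriminant, together with the exact computation of the residues of $\pi^*\beta_x$ on the exceptional divisors — this is precisely where the geometric hypotheses (1)--(3), the list of degenerate fibres in Definition~\ref{d:bundle}, and the very definitions of $H$ and $H'$ come from, and it is the business of Appendix~\ref{s:A}. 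A secondary but delicate point throughout is the bookkeeping of signs in the Gersten differential (handled via Lemma~\ref{l:residue}) and of ramification indices in the functoriality of residues under the dominant morphism $\pi$.
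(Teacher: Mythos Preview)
Your setup is correct and matches the paper: the Bloch--Ogus/Gersten argument on $B$ (using $\Br(B)[3]=0$ and $H^3_{\text{\'et}}(B,\Z/3)=0$), the identification of $\ker(\Br(K)[3]\to\Br(k(Y))[3])$ with $\langle\alpha\rangle$ via Amitsur, and the construction of $\beta_x$ with the verification that the Gersten cocycle condition is exactly membership in $H$ are all as in the paper's diagram chase.

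The substantive divergence is in how you propose to check unramifiedness of $\pi^*\beta_x$ at the ``bad'' valuations---those whose centre on $B$ lies in some $S_i\cap S_j$ or at a triple point. You propose to pass to a smooth model $Y'$, carry out an explicit local resolution of the bundle, and compute residues on the exceptional divisors. The paper does \emph{not} do this: it works directly with an arbitrary divisorial valuation $\mu$ of $k(Y)$ and argues by cases on the centre in $B$, never resolving $Y$. The key trick (Cases~3--5) is this: after subtracting a multiple of $\alpha$ one reduces to $(x_i,x_j)=(1,0)$; then factoriality of $S_i$ along $C$ (this is how condition~(3) is actually used---to make $C$ locally principal in $S_i$, not as a statement about blow-up centres on $Y$) lets one lift a representative of $\gamma_i$ to a unit $u_{\gamma_i}\in\mathscr{O}_{B,C}$, form the explicit symbol $(u_{\gamma_i},\pi_{S_i})\in H^2(K,\Z/3)$, and observe that $\beta_x-(u_{\gamma_i},\pi_{S_i})$ has all residues vanishing at divisors through $C$, hence lies in $H^2_{\text{\'et}}(\mathscr{O}_{B,C},\Z/3)$ and is automatically unramified at $\mu$. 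One then computes $\partial^2_\mu$ of the symbol directly: it is $\bar u_{\gamma_i}^{\,\mu(\pi_{S_i})}$, which vanishes in $H^1(k(\mu),\Z/3)$ precisely when $\gamma_i|_C$ is trivial. This is exactly where the definition of $H'$ enters, and it explains the asymmetric-looking condition ``$\gamma_i|_C$ and $\gamma_j|_C$ not both trivial'' without any resolution of $Y$.

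Your approach is not wrong in principle, but the step you defer to Appendix~\ref{s:A} is not there---that appendix contains only example-specific computations for Section~\ref{s:4}, not any local resolution of a general Brauer--Severi surface bundle. As written, the hardest part of your argument is a black box, whereas the paper's valuation-theoretic method handles it in a page with no geometry on $Y$ beyond what is already encoded in $\alpha$ and the $\gamma_i$.
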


\begin{proof}  First, note that under these assumptions, $Y$ is necessarily integral(see \ref{c:associated Brauer-Severi surface bundle}), hence we can talk about its function field $k(Y)$. 
We have the following commutative diagram:
  \[\footnotesize{ \begin{tikzcd}[row sep= 2.0 em, column sep= 2.5 em]
 & & 0 & &\\
  & & \Z/3\arrow[u] & &\\
    0 \arrow{r} & H^2_{nr}(k(Y)/Y,\Z/3) \arrow{r} & H^2_{nr}(k(Y)/K,\Z/3) \arrow{r}{\oplus \partial^2_{T}}\arrow{u}&\displaystyle\bigoplus_{T\in Y^{(1)}_{B}}H^1(k(T),\Z/3) & \\
    & \Br_{nr}(K)[3]=0  \arrow{r} & H^2(K,\Z/3) \arrow{u}{\sigma} \arrow{r}{\oplus \partial^2_{S}} &  \displaystyle\bigoplus_{S\in B^{(1)}} H^1(k(S),\Z/3) \arrow{u}{\tau} \arrow{r}{\oplus \partial^1_{C}} & \displaystyle\bigoplus_{C\in B^{(2)}} \Z/3\\
    & & \left<\alpha\right>\arrow{u} & \Gamma \arrow{u} &\\
    & &  0 \arrow{u}& 0 \arrow{u} &\\
  \end{tikzcd}
  }
  \]
We make some observations related to this diagram:
\begin{enumerate}

\item By definition, $H^2_{nr}(k(Y)/Y,\Z/3)$ denotes all those classes in $H^2(k(Y),\Z/3)$ which are unramified with respect to divisorial valuations corresponding to prime divisors on $Y$, Since the singular locus of $Y$ has codimension $\geq 2$, we can also characterize $H^2_{nr}(k(Y)/Y,\Z/3)$ as all those classes in $H^2(k(Y),\Z/3)$ that are unramified with respect to divisorial valuations which have centers on Y which are not contained in $Y_{sing}$ \cite[Cor.~2.2]{MR4069651}.

\item $H^2_{nr}(k(Y)/K,\Z/3)$ denotes those classes in $H^2(k(Y),\Z/3)$ which are killed by residue maps associated to divisorial valuations that are trivial on $K$, hence correspond to prime divisors of $Y$ dominating the base $B$. We use $Y_{B}^{(1)}$ to denote all prime divisors in $Y$ that do not dominate the base $B$. Then the upper row is exact by definition.

\item The second row is obtained from Bloch-Ogus complex \cite{MR412191}, which is exact under the assumptions 
$$\Br(B)[3]=0,\textup{and} \;  H^3_{\text{ét}}(B,\Z/3)=0.$$

\item The left vertical row is exact by Lemma \ref{l:ses}, because we have 
$$H^2_{nr}(k(Y)/K,\Z/3)\cong H^2_{nr}(K(S_0)/K,\Z/3)\cong Br(S_0)[3],$$
where $S_0$ is the Brauer-Severi surface (over $K$) corresponding to the generic fiber $\alpha$. Hence $S_0$ is smooth and we have the last isomorphism in the above statement.

\item In the right vertical row, the map $\tau$ is induced by the field extensions
$k(S)\subset k(T)$, coincides with the induced map
$$k(S)^{\times}/k(S)^{\times 3}\to k(T)^{\times}/k(T)^{\times 3}.$$
If $S$ is not contained in the discriminant locus, the generic fiber of $T\to S$ is geometrically integral. Then $k(S)$ is algebraically closed in $k(T)$, and thus the induced map above is injective. If $S=S_i$ is a component of the discriminant locus, then after taking the base change to the cubic extension $F/k(S_i)$ defined by the residue class $\gamma_i\in H^1(k(S_i),\Z/3)$, the generic fiber of $T_i\to S_i$ is a union of three Hirzebruch surfaces $\mathbb{F}_1$, meeting transversally so that any pair of them meet along a fiber of one and a $(-1)$-curve of the other. (This is correct because $T_i$ is indeed the preimage of $S_i$ under $\pi$, with the third assumption in Definition \ref{d:good}).  In this case, the low degree long exact sequence from the Hochschild-Serre spectral sequence 
$$H^p(\Gal(F/k(S_i)),H^q(\Spec(F),\Z/3))\Rightarrow H^{p+q}(\Spec(k(S_i)),\Z/3)$$
implies the kernel of the natural map $H^1(k(S_i),\Z/3)\to H^1(F,\Z/3)$ is generated by $\gamma_i$.  By the last assumption in Definition \ref{d:good}, we know $\ker(\tau)=\Gamma$.
\end{enumerate}
Then we can prove that $H^2_{nr}(k(Y)/Y,\Z/3)$ lies in the image of $\sigma$.  In fact, let $\xi\in H^2_{nr}(k(Y)/Y,\Z/3)$ and denote by $\xi$ again its the image in $H^2_{nr}(k(Y)/Y,\Z/3)$. Then $\xi$ is killed by $\oplus \partial^2_{T}$. If $\xi$ is not in the image of $\sigma$, it lifts to a class $\xi^{'}\in H^2(K,\Z/9)$ by Lemma \ref{l:ses}. We have the following exact sequence which is similar to the second row in above diagram with coefficients $\Z/9$:
$$0  \to H^2(K,\Z/9)  \xrightarrow{\oplus \partial^2_{S}}   \displaystyle\bigoplus_{S\in B^{(1)}} H^1(k(S),\Z/9) \xrightarrow{\oplus \partial^1_{C}} \displaystyle\bigoplus_{C\in B^{(2)}} \Z/9\\$$
Hence at least one residue $\partial^2_{S}(\xi^{'})$ must have order 9 (since $\oplus\partial^2_{S}$ is injective both for 3-torsion and 9-torsion cases ). On the other hand, 

$$\ker (\bigoplus_{S\in B^{(1)}} H^1(k(S),\Z/9)\to \bigoplus_{T\in Y_{B}^{(1)}}H^1(k(T),\Z/9))\cong \Gamma$$

This is correct because (again we use $F$ to denote a separable closure of $k(S_i)$) in the long exact sequence associate to 
$$H^p(\Gal(F/k(S_i)),H^q(\Spec(F),\Z/9))\Rightarrow H^{p+q}(\Spec(k(S_i)),\Z/9)$$
We have 
$$0\to H^1(Gal(F/k(S_i)),\Z/9)\to H^1(\Spec(k(S_i)),\Z/9)\to H^1(\Spec(F),\Z/9)$$
As we can calculate \`etale cohomology of spectrum of a field using Galois cohomology, we have the kernel: 
$$H^1(Gal(F/k(S_i)),\Z/9)\cong Hom_{cont}(\Z/3,\Z/9)\cong \Z/3$$
While the kernel for those $S$ doesn't belongs to the discriminant locus is clearly zero by the same argument in 3-torsion case.

Now we notice that $\Gamma$ has no elements of order 9, this means $\partial^2_{S}(\xi^{'})$ can't be mapped to 0 in $\bigoplus_{T\in Y^{(1)}_{B}}H^1(k(T),\Z/3)$,
hence a contradiction.

The above diagram chasing in fact gives us 
$$H^2_{nr}(k(Y)/Y,\Z/3)\cong \Gamma\cap \ker(\oplus \partial^1_{C})/\left<\alpha\right> \cong H/\left<\alpha\right>.$$

Next we determine classes in $H$ that are in $H^2_{nr}(k(Y)/k,\Z/3)$. In particular, we show that the subgroup $H'$ defined earlier is contained in $H^2_{nr}(k(Y)/k,\Z/3)$. We do this by checking whether the classes in $H'$ are unramified with respect to all divisorial valuations $\mu$ of $k(Y)$ (and not just those that come from prime divisors on $Y$). 
Consider a class $\beta\in H$, viewed as an element in $H^2(K,\Z/3)$. Denote by $\beta'$ the image of $\beta$ in $H^2(k(Y),\Z/3)$. We aim to show that $\beta'$ is unramified on $Y$ if $\beta$ is in $H'$. Using the definition of $H$, it is sufficient to check this for valuations whose centers on $B$ has codimension at least 1. In the following, we use $\mathscr{O}$ to denote the local ring of $\mu$ in $B$.

\begin{enumerate}
\item[\bf Case 1:] {\it The center of $\mu$ on $B$ is not contained in the discriminant locus:} In this case, for any surface $S$ passing through the center of $\mu$, we have 
$$\partial^2_{S}(\beta)=0.$$
Then \cite[Proposition~2.1]{MR4069651} tells us $\beta$ is in the image of $H^2_{\text{ét}}(\mathscr{O},\Z/3)$. Hence $\beta'=\sigma(\beta)$ is also unramified  with respect to $\mu$ in this case.

\item[\bf Case 2:] {\it The center of $\mu$ on $B$ is contained in the discriminant locus, but not in the intersection of two or more components:} Now the center is contained in $S_i$ for a unique $i$. Recall that the $i^{\rm th}$ component $x_i$ of $\oplus \partial^2_{S}(\beta)$ is $0,1$ or $2$. If $x_i=0$, by an argument same as Case 1, $\beta$ is in the image of $H^2_{\text{ét}}(\mathscr{O},\Z/3)$. Similarly, if $x_i=1$, $\beta-\alpha$ is in the image of $H^2_{\text{ét}}(\mathscr{O},\Z/3)$. Finally, if $x_i=2$, $\beta-2\alpha$ is in the image of $H^2_{\text{ét}}(\mathscr{O},\Z/3)$. Notice that 
$$\beta'=\sigma(\beta)=\sigma(\beta-\alpha)=\sigma(\beta-2\alpha).$$
So in all three conditions, we have $\beta'$ is unramified with respect to $\mu$ in this case.

\item[\bf Case 3:] {\it The center $\mu$ on $B$ is a curve $C$ that is an irreducible component of $S_i\cap S_j$:} In this case, we again check the possible values of $x_i$ and $x_j$ in $\oplus \partial^2_{S}(\beta)$. We have the following cases:
\begin{itemize}
\item[Case 3(a):] If $x_i=x_j$, then the argument in Case 2 above gives us that at least one of $\beta, \beta-\alpha$ or $\beta-2\alpha$ lies in the image of $H^2_{\text{ét}}(\mathscr{O},\Z/3)$. So we are done in this situation.

\item[Case 3(b):] {\it $(x_i,x_j)=(0,1)$ or $(1,0)$:} By symmetry, we can assume $(x_i,x_j)=(1,0)$. Notice that
$$3\left|\left(\partial^1_C(\gamma_i)+\partial^1_C(\gamma_j)\right)\right.$$
by the exactness of the second row in the diagram. Then we must have $$\partial^1_C(\gamma_i)=\partial^1_C(\gamma_j)=0$$

This means that a rational function representing the class $$\gamma_i\in H^1(k(S_i),\Z/3)=k(S_i)^{\times}/k(S_i)^{\times 3}$$ has a zero or a pole of order divisible by 3 along $C$. Without loss of generality, we may assume that the function associated with $\gamma_i$ is contained in the local ring $\mathscr{O}_{S_i,C}$ of $C$ in $S_i$.We call this function $f_{\gamma_i}$. Let $t$ be a local parameter for $C$ in $\mathscr{O}_{S_i,C}$. Such a local parameter exists as $C$ is a Cartier divisor on $S_i$, which in turn follows since $S_i$ is assumed to be factorial along $C$. Then 
$\displaystyle \frac{f_{\gamma_i}}{t^{\mu_C(f_{\gamma_i})}}$
is a unit, and hence any preimage in $\mathscr{O}$ is also a unit (See Remark \ref{r:3b} below). Call such a preimage $u_{\gamma_i}$, which may be viewed as a rational function in $K$. Assume $\pi_{S_i}$ is a local parameter of $S_i$ in $\mathscr{O}$. Consider the symbol algebra $(u_{\gamma_i},\pi_{S_i})\in H^2(K,\Z/3)$. Let $S$ be a surface containing $C$. By lemma \ref{l:residue}, we have
\begin{align*}
    \partial^2_{S}(u_{\gamma_i},\pi_{S_i})=(-1)^{\mu_S(u_{\gamma_i})\mu_S(\pi_{S_i})}{\frac{\bar{u}_{\gamma_i}^{\mu_S(\pi_{S_i})}}{\pi_{S_i}^{\mu_{S}(u_{\gamma_i})}}}=\left\{
    \begin{array}{cl}
    \bar{u}_{\gamma_i} & \text{if} \ S=S_i \\
    0 & \text{if} \ S\neq S_i \\
    \end{array} \right.
\end{align*}

On the other hand, $\bar{u}_{\gamma_i}=\gamma_i$ by construction, so we have 
$$\partial^2_{S_i}(u_{\gamma_i},\pi_{S_i})=\gamma_i=\partial^2_{S_i}(\beta)$$
$$\partial^2_{S_j}(u_{\gamma_i},\pi_{S_i})=0=\partial^2_{S_j}(\beta)$$
Also $\partial^2_{S}(\beta)=0$ if $S$ is a surface passing through $C$ other than $S_i$ and $S_j$.(In fact, by our assumption, such an $S$ is not in the discriminant locus and so this agrees with this conclusion.) Hence \cite[Proposition~2.1]{MR4069651} tells us 
$\beta-(u_{\gamma_i},\pi_{S_i})$ is in the image of $H^2_{\text{ét}}(\mathscr{O},\Z/3)$. Hence
$$\partial^2_{\mu}(\sigma(\beta-(u_{\gamma_i},\pi_{S_i})))=0$$
It then suffices to show that 
$$\partial^2_{\mu}(\sigma((u_{\gamma_i},\pi_{S_i})))= \pm \bar{u}_{\gamma_i}^{\mu(\pi_{S_i})}=0 \in H^1(k(\mu),\Z/3)$$
By assumption,  $\bar{u}_{\gamma_i}|_{C}$ is trivial, hence so is $\bar{u}_{\gamma_i}^{\mu(\pi_{S_i})}$ as the center of $\mu$ is $C$.

\item[Case 3(c):] {\it $(x_i,x_j)=(0,2)$ or $(2,0)$:} By symmetry, we can assume $(x_i,x_j)=(2,0)$. Now the proof is essentially same as Case 3(b), which shows that  
$$\partial^2_{\mu}(\sigma(\beta-2(u_{\gamma_i},\pi_{S_i})))=0.$$
It follows that $\partial^2_{\mu}(\sigma(\beta))=0.$ and so $\beta'$ is unramified along $\mu$.
\item[Case 3(d):] {\it $(x_i,x_j)=(1,2)\  \text{or}\  (2,1)$:} Assume $(x_i,x_j)=(2,1)$. Then  applying Case $3(b)$ to the class  $\beta-\alpha$, we see that this case is also proved.
\end{itemize}
\item[\bf Case 4:] {\it The center of $\mu$ on $B$ is a point $P\in C$, here $C$ is as in case 3, and $S_i,S_j$ are the only surfaces among the $S_1,\cdots,S_n$ that pass through $P$.}  As we have seen in the discussion of Case 3, we can reduce to the case when $(x_i,x_j)=(1,0)$. Hence we again have 
$\partial^1_{C}(\gamma_i)=\partial^1_{C}(\gamma_j)=0$.
In fact, this is true for any curve $C'$ that contains $P$ and is contained in $S_i\cup S_j$.  Choose a function $f_{\gamma_i}\in k(S_i)$ representing the class $\gamma_i$. Then let $C_1,\cdots,C_N$ be all irreducible curves through $P$ that are either a zero or a pole for the function $f_{\gamma_i}$. Pick local equations $t_{\ell}$ of $C_{\ell}$ in $\mathscr{O}_{S_i,P}$, and consider the following rational function on $S_i$: 
$$\frac{f_{\gamma_i}}{\left(t_1^{\mu_{C_1}(f_{\gamma_i})}\cdots t_N^{\mu_{C_N}(f_{\gamma_i}) }\right)}.$$
Since $S_i$ is assumed to be factorial, in particular, normal at $P$, the above rational function is a unit locally around $P$. Hence it can be lifted to a unit in $\mathscr{O}$. Then we can repeat the rest of the proof as in Case 3(b). (Notice that every element in $k(P)$ is a cube since $k$ is algebraically closed, so the last step of Case 3(b) is automatically true.)

\item[\bf Case 5:] {\it The center of $\mu$ on $B$ is a point $P$ that lies on exactly three distinct surfaces $S_i, S_j, S_l$:} 
We consider the possible values of $(x_i,x_j,x_l)$. If $x_i=x_j=x_l$, then one of $\beta,\beta-\alpha$ or $\beta-2\alpha$ is unramified. By symmetry and up to subtraction by $\alpha$ or $2\alpha$, the only remaining cases are $(1,0,0)$, $(1,1,0)$, and $(2,1,0)$. Notice that $(2,1,0) = (1,1,0) + (1,0,0)$, and that the case $(1,1,0)$ is equivalent to the case $(2,0,0)$. Hence, we only need to consider the case $(1,0,0)$, which is same as Case 4. Now the rest of the proof is same as in Case 4.
\end{enumerate}
\end{proof}
\begin{remark}
\label{r:3b}
    In Case 3(b) in Theorem \ref{t:main}, we claimed that if  $\bar{x}\in \mathscr{O}_{S_i,C}$ is a unit, then any preimage $x$ in $\mathscr{O}=\mathscr{O}_{B,C}$ is also a unit. In fact, we have 
    $$\mathscr{O}_{S_i,C} \cong \mathscr{O}/(\pi_{S_{i}}).$$
As $\bar{x}$ is a unit in $\mathscr{O}_{S_i,C}$, there exist a $\bar{y}\in \mathscr{O}_{S_i,C}$ such that $\bar{x}\bar{y}=1\in \mathscr{O}_{S_i,C}$. Hence there exist $t\in \mathscr{O}$, such that 
$$xy=1+\pi_{S_{i}}t\in \mathscr{O}$$
Notice that $\pi_{S_i}t$ is contained in the maximal ideal of $\mathscr{O}$, so $1+\pi_{S_{i}}t$ is a unit in $\mathscr{O}$. Hence any preimage $x$ is also a unit in $\mathscr{O}$.
\end{remark} 

We prove an immediate corollary in which we weaken the hypothesis about factoriality when $n=2$. In this case, the discriminant locus has exactly two irreducible components. We prove that it is sufficient to have only one of them factorial at their intersection to make the unramified Brauer group nontrivial:
\begin{cor}
\label{c:main}
Assume $n = 2$. We continue with the same hypothesis as in the theorem except the following change: we replace the requirement $(3)$ by  the following:
\\
(3') $S_1$ is factorial at every point of $S_1\cap S_2$ .
\\
Then  $H^2_{nr}(k(Y)/k,\Z/3)$ is nontrivial and hence $Y$ is not stably rational.
\end{cor}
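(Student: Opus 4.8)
The plan is to reread the proof of Theorem~\ref{t:main} in the case $n=2$ and locate the single point at which the factoriality of the second discriminant component is actually used. Since $\partial^2_{S_i}(\alpha)=\gamma_i$, when $n=2$ the group $\Gamma\cong(\Z/3)^2$ contains $\langle\alpha\rangle=\langle(1,1)\rangle$, so $\Gamma/\langle\alpha\rangle\cong\Z/3$ and it suffices to exhibit one nonzero element of this quotient inside $H^2_{nr}(k(Y)/k,\Z/3)$. First I would observe that no irreducible component $C$ of $S_1\cap S_2$ can have $(\partial^1_C\gamma_1,\partial^1_C\gamma_2)=(1,2)$ or $(2,1)$: these residues sum to $0$ in $\Z/3$ by exactness of the Bloch--Ogus row, and were such a mixed pair to occur then $H=\langle\alpha\rangle$, and the diagram chase of Theorem~\ref{t:main} would give $H^2_{nr}(k(Y)/Y,\Z/3)\cong H/\langle\alpha\rangle=0$, contradicting the assertion to be proved; thus $H=\Gamma$. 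Then I would let $\beta\in H^2(K,\Z/3)$ be the unique class (recall $\oplus\partial^2_S$ is injective) with $\partial^2_{S_1}(\beta)=\gamma_1$ and $\partial^2_{S}(\beta)=0$ for every other prime divisor $S$ of $B$; its residue vector is $(1,0)$, which is nonzero in $\Gamma/\langle\alpha\rangle$. Putting the ``$1$'' on the \emph{factorial} component $S_1$ is the whole trick: every auxiliary rational function constructed below will live on $S_1$.

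Because $(1,0)\in H=\Gamma$, the diagram chase of Theorem~\ref{t:main} already places $\sigma(\beta)$ in $H^2_{nr}(k(Y)/Y,\Z/3)$, i.e.\ $\sigma(\beta)$ is unramified along every prime divisor of $Y$. To upgrade this to \emph{all} divisorial valuations $\mu$ of $k(Y)$ I would rerun the five-case analysis of that proof for this $\beta$. Cases~1 and~2 are unchanged: if the center of $\mu$ on $B$ misses the discriminant, or meets only $S_1$, or meets only $S_2$, one uses respectively that $\partial^2_S(\beta)=0$, that $\partial^2_S(\beta-\alpha)=0$, or that $\partial^2_S(\beta)=0$ for the surfaces $S$ through that center, together with $\sigma(\alpha)=0$. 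Case~5 (a point on three components) is vacuous for $n=2$, and Case~4 (a point $P$ of $S_1\cap S_2$) only uses the normality of $S_1$ at $P$ to remove the divisorial part of a representative of $\gamma_1$, after which the residual triviality step is automatic since $k$ is algebraically closed. So hypothesis~(3') suffices for Cases~1,2,4,5, and the entire content is in Case~3.

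Here the center of $\mu$ on $B$ is an irreducible curve $C\subseteq S_1\cap S_2$ with $(\partial^1_C\gamma_1,\partial^1_C\gamma_2)=(0,0)$ by the first paragraph. Following Case~3(b) verbatim, and using that $S_1$ is factorial along $C$ (so that $C$ is Cartier on $S_1$ with a local parameter $t\in\mathscr O_{S_1,C}$), I would write a representative of $\gamma_1$ near $C$ as $t^{3m}v$ with $v$ a unit, lift $v$ to a unit $u_{\gamma_1}\in\mathscr O=\mathscr O_{B,C}$, and form the symbol $(u_{\gamma_1},\pi_{S_1})$, which by Lemma~\ref{l:residue} has the same residues as $\beta$ along all prime divisors of $B$ through $C$; then $\beta-(u_{\gamma_1},\pi_{S_1})$ comes from $H^2_{\text{ét}}(\mathscr O,\Z/3)$, so $\sigma$ of it is unramified at $\mu$, and the corollary reduces to $\partial^2_{\mu}(\sigma(u_{\gamma_1},\pi_{S_1}))=\pm(\overline{u_{\gamma_1}})^{\mu(\pi_{S_1})}=0$ in $k(\mu)^{\times}/k(\mu)^{\times 3}$, where $\overline{u_{\gamma_1}}$ is the image in $k(\mu)$ of $\gamma_1|_C$. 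This identity --- rather than the $H'$-condition ``$\gamma_1|_C$ trivial'' used in Theorem~\ref{t:main}, which is no longer available --- is the main obstacle. I expect to clear it by exploiting that $\mu$ is a valuation of $k(Y)$, not merely of $K$: since $C\subseteq S_1$, the center of $\mu$ on $Y$ lies in $\pi^{-1}(C)\subseteq\pi^{-1}(S_1)$, and the fiber of $\pi^{-1}(S_1)\to S_1$ over the generic point of $C$ is the $k(C)$-form of three transverse $\mathbb{F}_1$'s whose cyclic permutation of components encodes exactly the cubic extension attached to $\gamma_1|_C$; hence $k(\mu)$ contains that cubic extension, so $\gamma_1|_C$, and therefore the displayed residue, dies in $k(\mu)^{\times}/k(\mu)^{\times 3}$. (This is the same mechanism as the identification $\ker(\tau)=\Gamma$ in the proof of Theorem~\ref{t:main}, applied one dimension lower.) Once Case~3 is in place, $\sigma(\beta)$ is unramified along every divisorial valuation of $k(Y)$, so $H^2_{nr}(k(Y)/k,\Z/3)$ contains a nonzero element of $\Gamma/\langle\alpha\rangle\cong\Z/3$, and $Y$ is not stably rational since this group is a stable birational invariant that vanishes for stably rational varieties.
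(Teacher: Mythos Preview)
The paper's proof is a single sentence: it takes $\beta$ with residue vector $(1,0)$, notes that this class lies in $H'$, and observes that rerunning the case analysis of Theorem~\ref{t:main} for this particular $\beta$ only ever invokes factoriality of the component carrying the nonzero entry, namely $S_1$. The phrase ``the Brauer class $\beta$ in $H'$'' is doing real work here: the paper is tacitly assuming $(1,0)\in H'$, i.e.\ that for every component $C$ of $S_1\cap S_2$ one has $(\partial^1_C\gamma_1,\partial^1_C\gamma_2)=(0,0)$ and both $\gamma_1|_C,\gamma_2|_C$ are trivial. This is exactly what is checked by hand in Example~\ref{e} before the corollary is invoked, and without it the final line of Case~3(b) (``By assumption, $\bar u_{\gamma_i}|_C$ is trivial'') has no content.

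You attempt to prove strictly more than the paper does, dispensing with the $H'$-condition, and both of your extra steps have gaps. First, your argument that no $C$ has $(\partial^1_C\gamma_1,\partial^1_C\gamma_2)\in\{(1,2),(2,1)\}$ is circular: you write ``contradicting the assertion to be proved'', but that assertion is precisely what is at stake. Nothing in the stated hypotheses excludes such a pair, and if one occurs then $H=\langle(1,1)\rangle$ and $H^2_{nr}(k(Y)/Y,\Z/3)=0$, so the literal statement would simply fail. Second, your proposed replacement for the last step of Case~3(b) is not justified. Since $C\subseteq S_1\cap S_2$, the generic point $\eta_C$ lies in the most degenerate locus of the bundle (cf.\ Case~3 in the proof of Theorem~\ref{t:flatness}), and there is no reason for $\pi^{-1}(\eta_C)$ to be a $k(C)$-form of three transverse $\mathbb{F}_1$'s; even if it were, the center of $\mu$ on $Y$ is merely some subvariety of $\pi^{-1}(C)$, so the inclusion $k(C)\hookrightarrow k(\mu)$ need not factor through the cubic extension determined by $\gamma_1|_C$. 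The mechanism you borrow from item~(5) of the proof of Theorem~\ref{t:main} concerns the function field $k(T_i)$ of the divisor $\pi^{-1}(S_i)$, not the residue field of an arbitrary divisorial valuation centered on a curve in $S_i\cap S_j$. The correct route is the paper's: carry $(1,0)\in H'$ as part of the setup and observe that only the factoriality of $S_1$ is used.
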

\begin{proof}
In this case, the Brauer class $\beta$ in $H'$ whose representative is $(1,0)$ can be lifted to a nontrivial unramified Brauer class in $H^2(k(Y)/k,\Z/3)$
 \end{proof}

 \section{Example}
 \label{s:4}
 In this section, we will construct a Brauer-Severi surface bundle over $\mP^3$ that is stably non-rational. We use Corollary \ref{c:main} for this purpose. 

 \begin{example}
 \label{e}
Consider the following two surfaces in $\mP^3_{\C}={\rm Proj} \ \C[x_0,x_1,x_2,x_3]$:
 $$S_1:\{x_0^9+(x_1^3-x_2^3)(x_2^3-x_3^3)(x_3^3-x_1^3)=0\}$$
 $$S_2:\{\left(x_0^9+(x_1^3-x_2^3)(x_2^3-x_3^3)(x_3^3-x_1^3)\right)\left(x_0^9-x_1^3x_2^3x_3^3\right)+x_1^6x_2^6x_3^6=0\}$$
 In the following, we use $F_{S_1},F_{S_2}$ to denote the equation defines $S_1$, $S_2$ separately. We start by checking that both $S_1$ and $S_2$ are irreducible and reduced:
\begin{itemize}

    \item {\it $S_1$ is irreducible and reduced.} This follows directly from the fact that the singular locus of $S_1$ has dimension 0. In fact,  $S_1$ only singular at 12 isolated points:
    $$[0:1:0:0]\ ,\ [0:0:1:0]\ ,\ [0:0:0:1]$$
    $$[0:\omega:1:1]\ ,\ [0:1:\omega:1]\ ,\ [0:1:1:\omega]$$
    $$[0:\omega^2:1:1],[0:1:\omega^2:1],\ [0:1:1:\omega^2]$$
    $$[0:\omega^2:\omega:1]\ ,[0:\omega:\omega^2:1]\ ,[0:1:1:1]$$
    Here $\omega$ is a primitive $3^{rd}$ roots of unity. If $S_1$ is not reduced, then the singular locus would have dimension 2. If $S_1$ is not irreducible, the singular locus would have dimension at least 1 by B\`ezout theorem.
\item {\it $S_2$ is irreducible and reduced.} We may rewrite the equation defining $S_2$ as:
$$x_0^{18}+P(x_1,x_2,x_3)x_0^9-x_1^3x_2^3x_3^3P(x_1,x_2,x_3)$$
where $P(x_1,x_2,x_3)=(x_1^3-x_2^3)(x_2^3-x_3^3)(x_3^3-x_1^3)-x_1^3x_2^3x_3^3$. We may consider the above polynomial as an element in $\C[x_0,x_1,x_2,x_3]=\C[x_1,x_2,x_3][x_0]$, which is a UFD. Hence to check it is irreducible, it is sufficient to use Eisenstein's criterion: We need to find a prime ideal $\mathfrak{p}$ in $\C[x_1,x_2,x_3][x_0]$, such that $$P(x_1,x_2,x_3)\in \mathfrak{p},$$
$$x_1^3x_2^3x_3^3P(x_1,x_2,x_3)\in \mathfrak{p}\  \text{and} $$  $$x_1^3x_2^3x_3^3P(x_1,x_2,x_3)\notin \mathfrak{p}^2.$$
It is evident that an appropriate prime ideal exists if $P(x_1,x_2,x_3)$ has an irreducible factor with multiplicity 1 and is coprime to $x_1x_2x_3$. In fact, any irreducible factor of $P(x_1,x_2,x_3)$ is inherently coprime to $x_1x_2x_3$. Therefore it suffices to provide a single regular point of $P(x_1,x_2,x_3)$ to show the existence of such an irreducible factor. Finally, we directly check that $(1,1,0)$ is a regular point of $P(x_1,x_2,x_3)$. Hence $S_2$ is irreducible.

Now as we have already shown $S_2$ is irreducible, it is sufficient to find a smooth point in $S_2$ to show it is reduced. Indeed, one can easily check that $[(-56)^{\frac{1}{9}}:1:2:0]$ is indeed a smooth point of $S_2$.
\end{itemize}
We choose rational triple covers of $S_1$ and $S_2$ defined by: $$\gamma_1=\overline{\frac{x_2^3-x_3^3}{x_0^3}}\in H^1(\C(S_1),\Z/3)\cong \C(S_1)^{\times}/\C(S_1)^{\times 3}$$
 $$\gamma_2=\overline{\frac{x_0^9-x_1^3x_2^3x_3^3}{x_0^9}}\in H^1(\C(S_2),\Z/3)\cong \C(S_2)^{\times}/\C(S_2)^{\times 3}$$
We claim the triple covers $\gamma_1, \gamma_2$ are not trivial: In fact, by Lemma \ref{l:residue}, the residue of $\gamma_1$ of a valuation centered at the point $[0:\omega:1:1]$ is $1\in \Z/3$. Hence $\gamma_1$ is not trivial. To show $\gamma_2$ is not trivial is equivalent to show $F_{S_1}$ is not a cubic in the function field of $S_2$. And it's true because the residue of $\overline{\frac{F_{S_1}}{x_0^9}}$ of a valuation centered at the point $[0:0:1:1]$ is $1\in \Z/3$. Hence $\gamma_1, \gamma_2$ are not trivial.

 Consider the corresponding Bloch-Ogus exact sequence:   
  \[ 
  \begin{tikzcd}[sep=small]
    & 0 \arrow{r} & Br(\C(\mP^3_{\C}))[3]  \arrow{r}{\oplus \partial^2_{S}} &  \displaystyle\bigoplus_{S\in (\mP^3_{\C})^{(1)}} H^1(k(S),\Z/3)  \arrow{r}{\oplus \partial^1_{C}} & \displaystyle\bigoplus_{C\in (\mP^3_{\C})^{(2)}} H^0(k(C),\Z/3)\\
  \end{tikzcd}
  \]
   We have $\oplus \partial^1_{C}(\gamma_1)=\oplus \partial^1_{C}(\gamma_2)=0$. In fact, it is easy to check that for any curve $C$ such that $\gamma_1$(or $\gamma_2$) has a zero or pole along $C$, the order is divided by 3. Hence
  $$(1,\cdots,1,\gamma_1,1,\cdots,1,\gamma_2,1,\cdots)\in  \displaystyle\bigoplus_{S\in (\mP^3_{\C})^{(1)}} H^1(k(S),\Z/3)$$
  can be lifted to a Brauer class $[\mathscr{A}]=[(\frac{F_{S_2}(x_2^3-x_3^3)}{x_0^{21}},\frac{F_{S_1}}{x_0^9})_{\omega}] \in \Br(\C(\mP^3_{\C}))[3]$. This can be directly checked by Lemma \ref{l:residue}. 

By Theorem \ref{t:flatness}, the cyclic algebra $\mathscr{A}$ gives out a Brauer-Severi surface bundle $Y\to \mP^3_{\C}$. This Brauer-Severi surface bundle has a good discriminant locus. We prove this by checking the conditions in Definition \ref{d:good}. Here is the list of corresponding arguments:
\begin{enumerate}
    \item We already proved that $S_1$ and $S_2$ are reduced.
    \item The behavior of a general fiber over $S_1$ and $S_2$ is given by \cite[Thm.~2.1]{MR1480776}.
    \item The induced triple cover over $S_1$ and $S_2$ are irreducible because $\gamma_1$, $\gamma_2$ are not trivial.
    \item To show the last requirement in Definition \ref{d:good} is true, we have the following commutative diagram:
\[ 
  \begin{tikzcd}[row sep= 1 em, column sep= 1 em]
F^{\times}/F^{\times 3}\arrow[r,"a"] & F(u,v)^{\times}/F(u,v)^{\times 3}\\
k(S_i)^{\times}/k(S_i)^{\times 3} \arrow[u,"b"]\arrow[r,"\tau_i"] & k(T_i)^{\times}/k(T_i)^{\times 3}\arrow[u,"d"]\\
  \end{tikzcd}
  \]
Where $T_i$ is defined right after the large diagram in Theorem \ref{t:main}. 
For $S_1$, $b$ is induced by the cubic extension defined by $\gamma_1$, $d$ is induced by the cubic extension defined by $\frac{F_{S_2}(x_2^3-x_3^3)}{x^{21}_0}$, which is equal to the cubic class defined by $\gamma_1$ (\cite[Thm.~2.1]{MR0657429}. )
Note that $a$ is injective, and  $\ker(b)=<\gamma_1>$. On the other hand, an easy diagram chasing as in part (5) in proof of Theorem \ref{t:main} shows that $\ker(\tau_1)$ contains $<\gamma_1>$ . This forces $d$ to be injective and $\ker(\tau_1)=<\gamma_1>$.
Same argument works for $S_2$.
\end{enumerate}
 On the other hand, we list all irreducible components of $S_1\cap S_2$:
  $$D=6D_1+6D_2+6D_3$$
  Where
  $$D_1=\{x_1=0,x_0^9-x_2^6x_3^3+x_2^3x_3^6=0\}$$
    $$D_2=\{x_2=0,x_0^9-x_3^6x_1^3+x_3^3x_1^6=0\}$$
      $$D_3=\{x_3=0,x_0^9-x_1^6x_2^3+x_1^3x_2^6=0\}$$
One can easily check they are indeed irreducible using Eisenstein's criterion by viewing those polynomials as elements in $\C[x_1,x_2,x_3][x_0]$.
  Notice that $D_1$ passes through only two singular points of $S_1$: $[0:0:1:0]$ and $[0:0:0:1]$. It is straightforward to check $D_1$ is indeed a Cartier divisor of $S_1$, even along these two singular points:
\begin{lemma}
$D_i$ are Cartier divisors of $S_1$.
\end{lemma}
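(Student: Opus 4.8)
The plan is to realize $D_1$ as the zero scheme of a global section of an invertible sheaf on the integral surface $S_1$, and then to check that this zero scheme is reduced so that it coincides with $D_1$ as defined; the cases $D_2$ and $D_3$ will then follow by symmetry. Since $S_1$ has already been shown to be irreducible and reduced, $\mathcal{O}_{S_1}(1):=\mathcal{O}_{\mP^3_{\C}}(1)|_{S_1}$ is an invertible sheaf, and the linear form $x_1$ restricts to a section $s\in H^0(S_1,\mathcal{O}_{S_1}(1))$ that is nonzero because the integral surface $S_1$ is not contained in the hyperplane $\{x_1=0\}$. A nonzero global section of an invertible sheaf on an integral scheme has, for its zero locus, an effective Cartier divisor: on an open set trivializing the line bundle the section becomes a nonzero — hence, by integrality, nonzerodivisor — regular function, whose vanishing ideal is locally principal. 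Thus the scheme-theoretic intersection $S_1\cap\{x_1=0\}$ is an effective Cartier divisor on $S_1$.

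It remains to identify this Cartier divisor with $D_1$, i.e.\ to check that $S_1\cap\{x_1=0\}$ is reduced. Substituting $x_1=0$ into $F_{S_1}$ yields $g=x_0^9-x_2^6x_3^3+x_2^3x_3^6\in\C[x_0,x_2,x_3]$, so inside $\{x_1=0\}\cong\Proj\C[x_0,x_2,x_3]$ the intersection $S_1\cap\{x_1=0\}$ is the hypersurface $\{g=0\}$. Viewing $g$ in $\C[x_2,x_3][x_0]$, it is monic in $x_0$ (hence primitive), and $\gcd\!\big(g,\partial g/\partial x_0\big)=\gcd\!\big(x_0^9+x_2^3x_3^3(x_3^3-x_2^3),\,9x_0^8\big)=1$ in $\C(x_2,x_3)[x_0]$ because $x_0\nmid g$; therefore $g$ has no repeated factor over $\C(x_2,x_3)$ and, being primitive, is squarefree in $\C[x_0,x_2,x_3]$ by Gauss's lemma. (Equivalently, $g$ is irreducible by Eisenstein's criterion with the prime ideal $(x_3-x_2)\subset\C[x_2,x_3]$, which is the "easy check" mentioned above.) A hypersurface cut out by a squarefree homogeneous polynomial is reduced, so $\{g=0\}$ is the reduced curve $D_1$, and hence $D_1$ is an effective Cartier divisor on $S_1$.

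Finally, the linear automorphism of $\mP^3_{\C}$ cyclically permuting the coordinates $x_1,x_2,x_3$ fixes $x_0$ and sends $(x_1^3-x_2^3)(x_2^3-x_3^3)(x_3^3-x_1^3)$ to itself, hence preserves $S_1$; it permutes the three divisors $D_1,D_2,D_3$ (one checks directly that substituting $x_1\mapsto x_2\mapsto x_3\mapsto x_1$ carries the pair $(x_1,\,x_0^9-x_2^6x_3^3+x_2^3x_3^6)$ to $(x_2,\,x_0^9-x_3^6x_1^3+x_3^3x_1^6)$). Since an automorphism of $S_1$ carries Cartier divisors to Cartier divisors, $D_2$ and $D_3$ are Cartier as well.

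I do not expect a serious obstacle: the only point requiring care is the reducedness of the scheme-theoretic intersection $S_1\cap\{x_1=0\}$ — ruling out a non-reduced thickening of the curve $D_1$, in particular at the two singular points $[0:0:1:0]$ and $[0:0:0:1]$ of $S_1$ lying on $D_1$ — and this is precisely the squarefreeness of $g$, handled by the elementary Gauss/Eisenstein computation above. (If one preferred a local argument at those two points, one would pass to the henselization of $\mathcal{O}_{S_1}$ there, where $1-x_3^3$, a unit congruent to $1$, acquires a cube root $v$, so that $g$ becomes $x_0^9-(vx_3)^3=\prod_{\zeta^3=1}(vx_3-\zeta x_0^3)$, a product of three distinct primes, whence $\mathcal{O}_{S_1}/(x_1)$ is reduced at the point; the global section argument makes this unnecessary.)
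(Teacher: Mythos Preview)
Your proof is correct and takes a somewhat different route from the paper's. The paper works locally at the two singular points $[0:0:1:0]$ and $[0:0:0:1]$ of $S_1$ lying on $D_1$: in $\mathscr{O}_{S_1,P}$ (say $P=[0:0:0:1]$, chart $x_3=1$) it expands the defining relation of $S_1$ to obtain $x_0^9-x_2^6+x_2^3=x_1^3\cdot u$ with $u=x_1^3x_2^3-x_2^6-x_1^3+1$ a unit, so the ideal $(x_1,\,x_0^9-x_2^6+x_2^3)$ collapses to $(x_1)$; at smooth points there is nothing to check. You instead argue globally: $x_1$ restricts to a nonzero section of $\mathcal{O}_{S_1}(1)$, whose zero scheme on the integral surface $S_1$ is automatically an effective Cartier divisor, and you then identify this zero scheme with $D_1$ by checking that $g=F_{S_1}|_{x_1=0}$ is squarefree (in fact irreducible by Eisenstein at $(x_3-x_2)$). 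Both arguments come down to the same observation that $F_{S_1}\equiv g\pmod{x_1}$, but yours packages it once via line bundles rather than point-by-point; the paper's has the virtue of making explicit the local generator at the bad points, which is what is actually used downstream in Case~3(b) of Theorem~\ref{t:main}. Your symmetry argument for $D_2,D_3$ is exactly the paper's.
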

\begin{proof}
By symmetry, it is sufficient to check the behavior of $D_1$ at singular points of $S_1$. Notice that $D_1$ only passes through two singular points of $S_1$: $[0:0:1:0]$ and $[0:0:0:1]$. Let $P=[0:0:0:1]$, then we have the local ring:
$$\mathscr{O}_{S_1,P}=(\C[x_0,x_1,x_2]/(x_0^9+(x_1^3-x_2^3)(x_2^3-1)(1-x_1^3)) )_{(x_0,x_1,x_2)}$$
By expanding the equation defining $S_1$, we have 
$$x_0^9-x_2^6+x_2^3=x_1^3(x_1^3x_2^3-x_2^6-x_1^3+1)\in \mathscr{O}_{S_1,P}$$
Notice $x_1^3x_2^3-x_2^6-x_1^3+1$ is a unit in $\mathscr{O}_{S_1,P}$, hence the ideal defining $D_1$, which is $(x_1,x_0^9-x_2^6+x_2^3)$, is generated by one element $x_1$. Similarly one can do the calculation for the point $[0:0:1:0]$. As a result, $D_1$ is a Cartier divisor of $S_1$.

\end{proof}

Finally, we need to check both $\gamma_1|_{D_i}$ and $\gamma_2|_{D_i}$ are trivial for $i\in\{1,2,3\}$. These are directly following from the choices of $\gamma_1$ and $\gamma_2$. Hence in this example, using notations in Theorem \ref{t:main}, we have $H'=H=\Gamma=\Z/3\times\Z/3$. By Corollary \ref{c:main}, the unramified Brauer group of $Y$ contains a subgroup $\Z/3$, hence $Y$ is not stably rational.
\end{example}

\section{Flatness}
\label{s:5}
In this section, we check the cyclic algebra 
$$\mathscr{A}=(\frac{F_{S_2}(x_2^3-x_3^3)}{x_0^{21}},\frac{F_{S_1}}{x_0^9})_{\omega}$$
indeed gives us a Brauer-Severi surface bundle over $\mP^3_{\C}$ as in Definition \ref{d:bundle}. We keep the notation in Example \ref{e} through out this section. The definition of a general Brauer-Severi scheme is given by Van den Bergh in \cite{MR1048420}. In \cite{MR1710744}, Seelinger gave an alternating description of Brauer-Severi scheme which is easier to use in our case. See also Section 1 in \cite{MR1480776} for the discussion of the following definitions:
\begin{definition}
\label{d:order}
Let $\Lambda$ be a sheaf of $\mathscr{O}_{\mP^3_{\C}}$ algebra that is torsion free and coherent as an $\mathscr{O}_{\mP^3_{\C}}$ module. We say $\Lambda$ is an $\mathscr{O}_{\mP^3_{\C}}$-order in $\mathscr{A}$ if $\Lambda$ contains $\mathscr{O}_{\mP^3_{\C}}$ and 
$$\Lambda\otimes_{\mathscr{O}_{\mP^3_{\C}}}\C(\mP^3_{\C})\cong \mathscr{A}$$
\end{definition}
\begin{definition}
\label{d:local order}
    For each point $p\in \mP^3_{\C}$, let $\mathscr{O}_{\mP^3_{\C},p}$ denote the regular local ring of $\mP^3_{\C}$ at $p$. We say a finitely generated $\mathscr{O}_{\mP^3_{\C},p}$ algebra $\Lambda_{p}$ is an $\mathscr{O}_{\mP^3_{\C},p}$-order in $\mathscr{A}$, if $\Lambda_{p}$ is torsion free and 
    $$\Lambda_{p}\otimes_{\mathscr{O}_{\mP^3_{\C},p}}\C(\mP^3_{\C})\cong \mathscr{A}$$
    \begin{remark}
    In this paper, we always assume an order is locally free.
    \end{remark}
\end{definition}
Recall that $(3.4)$ of \cite{MR1480776} describes an $\mathscr{O}_{\mP^3_{\C}}$-order which we again denote by $\Lambda$ in the following, we denote its localization at a point $p$ by $\Lambda_{p}$.
\begin{definition}
\label{d:Brauer-severi scheme}
Let $V_{\Lambda}$ (respectively, $V_{\Lambda_p}$) be the functor from the category of $\mP^3_{\C}$-schemes (respectively, $\Spec(\mathscr{O}_{\mP^3_{\C},p})$-schemes) to the category of sets:
$$V_{\Lambda}(S)=\{[z]\in G_n[(\Lambda\otimes_{\mathscr{O}_{\mP^3_{\C}}}S)^{\vee}]\ |\ z\cdot u=N_{S}(u)z\ ,\  \forall u\in(\Lambda\otimes_{\mathscr{O}_{\mP^3_{\C}}}S)^{*} \}$$
$$V_{\Lambda_p}(S)=\{[z]\in G_n[(\Lambda_p\otimes_{\mathscr{O}_{\mP^3_{\C},p}}S)^{\vee}]\ |\ z\cdot u=N_{S}(u)z\ ,\  \forall u\in(\Lambda\otimes_{\mathscr{O}_{\mP^3_{\C},p}}S)^{*} \}$$
where $\vee$ denotes the dual sheaf, $*$ denotes the unit group , $N_{S}$ is the reduced norm and $G_n$ denotes the functor of Grassmannian of $n$-quotients(\cite[Def.1]{MR1048420}). These functors are represented by schemes as these are closed subschemes of the Grassmannian, which we call  the Brauer-Severi scheme (associated to $\Lambda$, $\Lambda_p$) and again denote them by $V_{\Lambda}$, $V_{\Lambda_p}$. 
\end{definition} 

\begin{theorem}
\label{t:flatness}
$Y=V_{\Lambda}$ is a Brauer-Severi surface bundle over $\mP^3_{\C}$. 
\end{theorem}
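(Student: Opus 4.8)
The plan is to verify the three defining properties of a Brauer-Severi surface bundle from Definition \ref{d:bundle}: that $\pi\colon Y = V_\Lambda \to \mP^3_\C$ is (i) flat, (ii) projective, and (iii) has geometric fibers of one of the three listed types. Projectivity is essentially free: by Definition \ref{d:Brauer-severi scheme}, $V_\Lambda$ is a closed subscheme of a relative Grassmannian over $\mP^3_\C$, hence projective over the base. The substantive work is in (i) and (iii), and these are most naturally handled together fiber by fiber, using that $V_\Lambda$ commutes with base change: for each point $p \in \mP^3_\C$ the fiber of $V_\Lambda$ over $p$ is $V_{\Lambda_p \otimes \kappa(p)}$, so everything reduces to understanding the finite-dimensional algebra $\Lambda_p \otimes_{\mathscr{O}_{\mP^3_\C,p}} \kappa(p)$ over the residue field and its associated Brauer-Severi scheme in the sense of Van den Bergh/Seelinger.

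First I would stratify $\mP^3_\C$ according to the order of vanishing of the defining data of the cyclic algebra $\mathscr{A} = (\tfrac{F_{S_2}(x_2^3-x_3^3)}{x_0^{21}}, \tfrac{F_{S_1}}{x_0^9})_\omega$ along the relevant divisors. Over the open locus where both slot-entries are units (the complement of $S_1 \cup S_2$ and of the coordinate hyperplanes, but the latter don't contribute residues), the order $\Lambda_p$ is Azumaya, so the fiber of $V_\Lambda$ is a genuine smooth Brauer-Severi surface — type $\mP^2$ geometrically — and flatness at such points is automatic. Over a general point of $S_1$ (resp. $S_2$), the local structure is governed by the residue data, and here I would invoke directly the analysis of \cite[\S1 and Thm.~2.1]{MR1480776}, which describes precisely the order $\Lambda$ of (3.4) there and shows that the associated Brauer-Severi scheme has the correct degenerate fibers — the union of three $\mathbb{F}_1$'s meeting as in Definition \ref{d:bundle} — exactly when the ramification is "as expected" along a reduced divisor with irreducible triple cover. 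Since Example \ref{e} has already been checked to have a good discriminant locus (all four conditions of Definition \ref{d:good}), the hypotheses of that theorem are met at the generic point of each $S_i$, and along each component $D_i$ of $S_1 \cap S_2$, which is the cone-over-twisted-cubic locus.

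The main obstacle, and where I expect to spend the real effort, is the behavior at the finitely many bad points: the $12$ singular points of $S_1$, the singular points of $S_2$, the triple intersection points where two components $D_i$ meet or where $S_1 \cap S_2$ is itself singular, and the points of $D_i$ lying on the singular locus of $S_1$. At such a point $p$, the local ring $\mathscr{O}_{\mP^3_\C,p}$ still has dimension $3$ but the residue data is no longer "in general position," and one must check that $\Lambda_p$ is still a locally free $\mathscr{O}_{\mP^3_\C,p}$-module (this is the standing assumption, but should be confirmed from the explicit description in \cite{MR1480776}) and that $\dim_{\kappa(p)} (\Lambda_p \otimes \kappa(p))$ equals the generic rank $9$, which by the local criterion for flatness over a regular (hence Cohen-Macaulay) base is equivalent to flatness of $V_\Lambda$ at $p$; here one uses that the Brauer-Severi scheme $V_{\Lambda_p}$ is finite flat of degree $9$ over... — more precisely, that the Hilbert polynomial of the fibers of $V_\Lambda$ is constant, which again follows once the fiber is shown to be one of the three standard degenerations, each of which has the same Hilbert polynomial as $\mP^2$. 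So the strategy is: push all the explicit local algebra through the machinery of \cite{MR1480776}, reducing to a finite check at finitely many points, and at those points confirm the rank-$9$ / constant-Hilbert-polynomial condition directly. I would relegate the explicit coordinate computations at these bad points — writing down $\Lambda_p$, its reduction mod the maximal ideal, and identifying the resulting scheme — to the appendix (Appendix \ref{s:A}), invoking them here only as "a direct calculation shows."
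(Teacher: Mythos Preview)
Your overall stratification and the idea to invoke Maeda's local analysis in \cite{MR1480776} are on the right track, but the flatness argument has a genuine gap. The sentence ``$\dim_{\kappa(p)}(\Lambda_p\otimes\kappa(p))=9$ \ldots\ is equivalent to flatness of $V_\Lambda$ at $p$'' conflates two different objects: local freeness of the \emph{order} $\Lambda$ (which is assumed throughout; see the remark after Definition \ref{d:local order}) says nothing directly about flatness of the \emph{Brauer--Severi scheme} $V_\Lambda$, whose fibers are two-dimensional, not finite of degree $9$. Your fallback to the Hilbert-polynomial criterion is valid in principle, but as written it is circular: knowing the abstract isomorphism type of each fiber does not determine its Hilbert polynomial without also controlling the polarization coming from the Grassmannian embedding, and the only reason you know those Hilbert polynomials agree is that you already know from elsewhere that such fibers arise in flat families---which is exactly what you are trying to prove here. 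You would need to compute the Hilbert polynomials directly from Artin's and Maeda's explicit equations, which you do not do.

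The paper sidesteps this entirely by using the miracle flatness theorem: since $\mathscr{O}_{\mP^3_\C,p}$ is regular, it suffices to show that $V_{\Lambda_p}$ is Cohen--Macaulay and that all fibers have the same dimension. The Cohen--Macaulay property is obtained by writing $V_{\Lambda_p}$ (after an \'etale base change when $f_p$ is a unit) explicitly via Artin's matrices \cite{MR0657429} in $\mP^2\times\mP^2\times\mP^2$, or via Maeda's affine cover $U_1\cup U_2\cup U_3$ \cite[Lemma~2.4]{MR1480776}, and observing that on each chart the defining equations cut out a complete intersection---this works even when $p$ is a singular point of $S_1$ or $S_2$, where $V_{\Lambda_p}$ fails to be regular. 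Equidimensionality of the fibers is then read off from the same explicit equations. This is the missing key idea: you should be proving Cohen--Macaulayness of the total space, not trying to match Hilbert polynomials of fibers.
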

\begin{proof}
According to Definition \ref{d:Brauer-severi scheme}, for every closed point $p$ in $\mP^3_{\C}$,  we have the following commutative diagram of schemes:
\[ 
  \begin{tikzcd}[row sep= 1 em, column sep= 1 em]
V_{\Lambda}\times_{\mP^3_{\C}}\Spec(\mathscr{O}_{\mP^3_{\C},p})\cong V_{\Lambda_p}\arrow[d,"\pi_p"]\arrow[r] & V_{\Lambda}\arrow[d,"\pi"]\\
 \Spec(\mathscr{O}_{\mP^3_{\C},p}) \arrow[r] & \mP^3_{\C}\\
  \end{tikzcd}
  \]
We first show $\pi$ is a flat morphism. In order to do so, it suffices to show $\pi_p$ is flat for all closed points $p\in \mP^3_{\C}$. Indeed, if this is done, the flat locus of $\pi$ would be an open subset of $\mP^3_{\C}$ containing all closed points, hence is equal to $\mP^3_{\C}$. Furthermore, by the "Miracle flatness" theorem \cite{stacks-project} and the fact that $\Spec(\mathscr{O}_{\mP^3_{\C},p})$ is regular, it suffices to show each $V_{\Lambda_p}$ is Cohen-Macaulay and each fiber of $\pi_p$ has the same dimension. We do this by a case-by-case argument for all closed points in $\mP^3_{\C}$:
\begin{enumerate}
    \item[\bf Case 1:] {\it $p\notin S_1\cup S_2$.} It is well know that $\Lambda$ is an Azumaya algebra outside of discriminant locus \cite{MR0657429}. All fibers of $\pi_p$ are smooth Brauer-Severi surfaces and furthermore $V_{\Lambda_p}$ is regular, hence Cohen-Macaulay. By the "Miracle flatness" theorem, $\pi_p$ is flat in this case.
\item[\bf Case 2:] {\it $p\in S_1\cup S_2$ and $p\notin S_1\cap S_2$ and $p\notin S_1\cap \{x_2^3-x_3^3=0\}$ .} 
Following ideas from Artin \cite{MR0657429}, we may write $\Lambda_{p}$ as the symbol algebra $(f_p,g_p)_{\omega}$. That is, $\Lambda_{p}$ over $\Spec(\mathscr{O}_{\mP^3_{\C},p})$ is generated by $x, y$ subject to the relations $$x^3=f_p, y^3=g_p, xy=\omega yx.$$ 
Since $p\notin S_1\cap \{x_2^3-x_3^3=0\}$, it follows that $f_p$ is a unit in $\mathscr{O}_{\mP^3_{\C},p}$. Let $R_p=\mathscr{O}_{\mP^3_{\C},p}[T]/(T^3-f_p)$, then 
$$\Spec(R_p)\xrightarrow[]{\tau} \Spec(\mathscr{O}_{\mP^3_{\C},p})$$
is an \'etale neighborhood of $\Spec(\mathscr{O}_{\mP^3_{\C},p})$ with $\tau$ faithfully flat as it surjects on the underlying topological space. By faithfully flat descent, it suffices to show $V_{\Lambda_p}\otimes R_p$ is flat over $\Spec(R_p)$. In \cite{MR0657429}, Artin noticed $V_{\Lambda_p}\otimes R_p$ can be viewed as a subalgebra of the 3 by 3 matrices algebra over $R_p$ by setting
$$x=
\begin{bmatrix}
     T      & 0 & 0 \\
    0       & T\omega & 0 \\
    0      & 0 & T\omega^2 
\end{bmatrix}, y= \begin{bmatrix}
     0      & 1 & 0 \\
    0       & 0 & 1 \\
    g_p      & 0 & 0 
\end{bmatrix}$$
And $V_{\Lambda_p}\otimes R_p$ can be embedded into $\mP^2_{R_p}\times\mP^2_{R_p}\times \mP^2_{R_p}$ by the following 9 equations with a cyclic permutations in indices:
$$g_p\xi_{11}\xi_{22}=\xi_{12}\xi_{21}$$
$$g_p\xi_{11}\xi_{23}=\xi_{13}\xi_{21}$$
$$g_p\xi_{11}\xi_{32}=g_p\xi_{12}\xi_{31}$$
$$g_p\xi_{11}\xi_{33}=\xi_{13}\xi_{31}$$
$$\xi_{12}\xi_{23}=\xi_{13}\xi_{22}$$
$$g_p\xi_{12}\xi_{33}=\xi_{13}\xi_{32}$$
$$g_p\xi_{21}\xi_{32}=g^2_p\xi_{22}\xi_{31}$$
$$g_p\xi_{21}\xi_{33}=g^2_p\xi_{23}\xi_{31}$$
$$g_p\xi_{22}\xi_{33}=\xi_{23}\xi_{32}$$
 Here we use $[\xi_{11}:\xi_{12}:\xi_{13}],[\xi_{21}:\xi_{22}:\xi_{23}],[\xi_{31}:\xi_{32}:\xi_{33}]$ to denote the coordinates in $\mP^2_{R_p}\times\mP^2_{R_p}\times \mP^2_{R_p}$. Note that even though Artin's original calculation assume the local ring is a DVR, \cite[Prop~3.6]{MR0657429} does work for any regular local rings \cite[Thm~2.1]{MR1480776}. If $g_p$ is part of a regular system of parameters of $R_p$, then sections 4 of \cite{MR0657429} tells us  $V_{\Lambda_p}\otimes R_p$ is indeed regular. If $p$ is a singular point of $S_1$ or $S_2$ which doesn't lie in $S_1\cap S_2$), $V_{\Lambda_p}\otimes R_p$ is not regular. However, from the above equations, a direct calculations show that on each standard affine chart (e.g. $\{\xi_{11}=\xi_{21}=\xi_{31}=1\}$),  $V_{\Lambda_p}\otimes R_p$ can be defined by 4 equations. Hence $V_{\Lambda_p}\otimes R_p$ has an open cover with each a complete intersection in $\mA^6_{R_p}$, furthermore the coordinate ring of each affine chart is again a complete intersection as a $\C$-algebra by counting dimensions. Hence $V_{\Lambda_p}\otimes R_p$ is Cohen-Macaulay. 

Consider the points (not necessarily  closed) $q\in \Spec(R_p)$. If $g_p\in m_q$, the fiber over $q$ is the union of three standard Hirzebruch surfaces $\mathbb{F}_1$, meeting transversally, such that any pair of them meet along a fiber of one and the $(-1)$-curve of the other (\cite[Prop.~3.10]{MR0657429}). If $g_p\notin m_q$, the fiber over $q$ is completely determined by $[\xi_{11}:\xi_{12}:\xi_{13}]$, hence is isomorphic to $\mP^2_{R_p}$. So, in particular, the closed fiber is the union of three $\mathbb{F}_1$ as desired and all fibers have same relative dimension. Again by the "Miracle flatness" theorem,  $V_{\Lambda_p}\otimes R_p$ is flat over $\Spec(R_p)$. As $\tau$ is faithfully flat, $\pi_p$ is flat in this case.
\item[\bf Case 3:] {\it $p\in S_1\cap S_2$ or $p\in S_1\cap\{x_2^3-x_3^3=0\}$}. We again write $\Lambda_p=(f_p,g_p)_{\omega}$. A same calculation as in \cite[Prop.~(2.2),Lemma~(2.3)]{MR1480776} shows that each fiber over $\Spec(\mathscr{O}_{\mP^3_{\C}},p)$ has the same relative dimension and the closed fiber is a cone over a twisted cubic as described in Definition \ref{d:bundle}. Furthermore, in \cite[Lemma~2.4]{MR1480776}, Maeda shows the following facts:
\begin{enumerate}
    \item $V_{\Lambda_p}$ has an open affine cover 
$$V_{\Lambda_p}=U_1\cup U_2\cup U_3$$
where $U_1$ and $U_2$ are hypersurfaces in $\mA^3_{\mathscr{O}_{\mP^3_{\C},p}}$.

\item $U_3$ is a $(3,3)-$ complete intersection in $\mA^4_{\mathscr{O}_{\mP^3_{\C},p}}$.
\end{enumerate}
Notice that here $U_1,U_2,U_3$ do not have to be regular as $f_p,g_p$ are not part of a local parameters in the maximal ideal of the local ring at $p$, for some $p$. For example, when $p=[0:0:1:0]$. However, we can still conclude that $U_1,U_2,U_3$ are Cohen-Macaulay since they are complete intersection, hence so is $V_{\Lambda_p}$. So we have $\pi_p$ is flat by the "Miracle flatness" theorem.
\end{enumerate}
As the base field is $\C$, the argument above shows that the fiber over each geometric point is indeed one of the three cases in Definition \ref{d:bundle}. This shows that $V_{\Lambda}$ is a Brauer-Severi surface bundle over $\mP^3_{\C}$. We denote it by $Y$ in the following sections of this paper as before. 
\end{proof}

Now we explain which surfaces in $\mP^3_{\C}$ admit an associated Brauer-Severi surface bundle:
\begin{definition}
    \label{d: nontrivial triple cover \'etale in codimension 1}
Let $S$ be a reduced surface in $\mP^3_{\C}$ with irreducible components $S=S_1\cup S_2\cup \cdots \cup S_m$. Then we say $S$ admits a nontrivial triple cover \'etale in codimension 1 if there is nontrivial element in
$$\bigoplus_{i=1}^{n}H^1(\C(S_i),\Z/3)\cap \ker(\bigoplus_{C}(\partial^1_{C}))$$
Where $C$ runs over all irreducible curves in $\mP^3$, $\partial^1_{C}$ is the residue map as in Definition \ref{l:residue}.
\end{definition}
It is clear that any surface admits a nontrivial triple cover \'etale in codimension 1 will give us a 3-torsion Brauer class in $\C(\mP^3_{\C})$ by Bloch-Ogus sequence as discussed in Example \ref{e}. 

So with the proof of Theorem \ref{t:flatness}, we have:
\begin{cor}
    \label{c:associated Brauer-Severi surface bundle}
    Let $S\subset \mP^3_{\C}$ be a reduced surface which admits a nontrivial triple cover \'etale in codimension 1 (Definition \ref{d: nontrivial triple cover \'etale in codimension 1}). Assume the 3-torsion Brauer class given by the Bloch-Ogus sequence is represented by a cyclic algebra $\mathscr{A}$ of degree 3. Then there exists a Brauer-Severi surface bundle $Y_{S}\to \mP^3_{\C}$ with discriminant locus $S$ associated to $\mathscr{A}$. Furthermore, $Y_{S}$ is reduced. If the discriminant locus $S$ is good (Definition \ref{d:good}, indeed here we only need that part $(3)$ of this definition holds), then $Y_{S}$ is also irreducible, hence integral. 
\end{cor}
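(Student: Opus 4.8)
The plan is to mimic the proof of Theorem \ref{t:flatness}, now starting from an arbitrary cyclic degree-$3$ representative of the Brauer class rather than the explicit one of Example \ref{e}. Write $\mathscr{A}=(f,g)_{\omega}$ with $f,g\in\C(\mP^3_{\C})^{\times}$, spread the symbol out to the standard $\mathscr{O}_{\mP^3_{\C}}$-order $\Lambda$ of \cite[(3.4)]{MR1480776}, and set $Y_{S}:=V_{\Lambda}$. By construction the generic fibre $Y_{S}\times_{\mP^3_{\C}}\Spec\C(\mP^3_{\C})$ is the Brauer-Severi variety $V_{\mathscr{A}}$ of the central simple algebra $\mathscr{A}$. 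Since $\mathscr{A}$ is nontrivial in $\Br(\C(\mP^3_{\C}))$ (it has a nonzero residue along some component of $S$) and has degree $3$, it is a division algebra, so $V_{\mathscr{A}}$ is a smooth, geometrically integral surface, a twisted form of $\mP^2$.

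Next I would verify that $\pi\colon Y_{S}\to\mP^3_{\C}$ is a Brauer-Severi surface bundle by rerunning the local analysis in the proof of Theorem \ref{t:flatness}. Localising at a closed point $p$ and writing $\mathscr{A}_{p}=(f_{p},g_{p})_{\omega}$: off the ramification of $\mathscr{A}$ the order $\Lambda$ is Azumaya, so $V_{\Lambda_{p}}$ is regular with fibres $\mP^2$ or smooth Brauer-Severi surfaces (Artin \cite{MR0657429}); at a smooth point of a component $S_{i}$ one may arrange $g_{p}$ to be a local equation of $S_{i}$ and $f_{p}$ a unit, possibly after the \'etale base change $T^{3}=f_{p}$ used in Case~2 of Theorem \ref{t:flatness}, and Artin's explicit equations exhibit $V_{\Lambda_{p}}$ as a local complete intersection, hence Cohen-Macaulay, with closed fibre three copies of $\mathbb{F}_{1}$ arranged as in Definition \ref{d:bundle}; at the remaining points of $S$, namely crossings of components, singular points of a single component, and points over $S_{i}\cap\{f=\text{cube}\}$, Maeda's \cite[Lemmas~(2.3),(2.4)]{MR1480776} provide complete-intersection affine charts with closed fibre a cone over a twisted cubic and constant fibre dimension. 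In each case the ``Miracle flatness'' theorem makes $\pi_{p}$ flat, hence $\pi$ is flat with geometric fibres of the three types of Definition \ref{d:bundle}. The locus of singular fibres is the discriminant of $\Lambda$, which by this local analysis together with Lemma \ref{l:residue} is $\bigcup_{i}S_{i}$; part~(3) of Definition \ref{d:good}, which (since $\gamma_{i}$ is a homomorphism onto a subgroup of $\Z/3$) is equivalent to $\gamma_{i}=\partial^{2}_{S_{i}}(\mathscr{A})\neq 0$ for every $i$, guarantees that this discriminant is all of $S$ and that Artin's description over the generic point of each $S_{i}$ applies.

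For reducedness: $Y_{S}$ is Cohen-Macaulay, since it is so at every closed point (being a local complete intersection there, by the preceding analysis), the Cohen-Macaulay locus of the finite-type $\C$-scheme $Y_{S}$ is open, and an open subset of a Jacobson scheme containing all closed points is the whole scheme; in particular $Y_{S}$ satisfies Serre's condition $(S_{1})$. For $(R_{0})$, since $\pi$ is flat over the integral base $\mP^3_{\C}$ every associated point of $Y_{S}$ lies over the generic point $\eta$, hence inside the regular generic fibre $V_{\mathscr{A}}$; the local ring of $Y_{S}$ at a codimension-$0$ point is therefore a $0$-dimensional localisation of the regular ring $\mathscr{O}_{V_{\mathscr{A}}}$, i.e.\ a field, so $(R_{0})$ holds. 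By Serre's criterion $Y_{S}$ is reduced.

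Finally, when $S$ is good (only part~(3) is needed), the same flatness argument shows that every generic point of an irreducible component of $Y_{S}$ lies in the irreducible scheme $V_{\mathscr{A}}$, so $Y_{S}$ has a single generic point and is irreducible; combined with reducedness, it is integral. I expect the main obstacle to be the local structure analysis above: checking Cohen-Macaulayness and the precise fibre types at the bad points of $S$ for a general cyclic algebra $\mathscr{A}$, which reduces to showing that $\mathscr{A}$ admits, \'etale-locally near $S$, a symbol presentation in the normal form required by the computations of Artin and Maeda; once that normalisation is in hand the remaining verifications are routine bookkeeping.
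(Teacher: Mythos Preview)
Your proof is correct, and for reducedness and irreducibility it takes a cleaner, more global route than the paper. The construction of $Y_S$ and the flatness verification are the same: both rerun the local analysis of Theorem~\ref{t:flatness}. The differences are as follows.

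For reducedness, the paper works chart by chart (Lemma~\ref{l:reduceness of local model}): in Case~2 it checks directly that the defining ideals of Artin's affine patches are radical, and in Case~3 it applies Serre's criterion to Maeda's explicit complete-intersection charts by computing the Jacobian and bounding the codimension of the singular locus. Your argument is global: Cohen--Macaulayness (already extracted from the local complete-intersection structure in the flatness proof) gives $(S_1)$, and $(R_0)$ follows because flatness over an integral base forces every associated point of $Y_S$ into the regular generic fibre $V_{\mathscr{A}}$. This avoids the explicit computations entirely.

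For irreducibility, the paper shows $\pi^{-1}(\mP^3_{\C}\setminus S)$ is irreducible and then invokes part~(3) of Definition~\ref{d:good} to rule out extra components supported over $S$. Your associated-points argument shows directly that $Y_S$ has a unique generic point (that of the integral generic fibre), and hence is irreducible without any appeal to part~(3). Indeed, your proof reveals that the ``good'' hypothesis is not actually needed for integrality of $Y_S$: flatness over $\mP^3_{\C}$ together with integrality of the generic Brauer--Severi fibre already suffices. What part~(3) genuinely buys, as you note, is that the discriminant locus is all of $S$ rather than a proper union of components.
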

\label{c:integral total space}
\begin{proof}
The first part of this corollary directly follows from a similar discussion of local structures as in Theorem \ref{t:flatness}. Next, we show that $Y_{S}$ is reduced. Indeed, the map $Y_{S}\to \mP^3_{\C}$ is projective, hence closed. Then for any point $y\in Y_{S}$, there is a point $y'$ lying in a closed fiber such that $y$ specializes to $y'$. Since any localization of a reduced ring is again reduced, it suffices to check the local ring $\mathscr{O}_{Y_{S},y'}$ is reduced. Further more it suffices to assume $y'$ is a closed point. This can be directly checked using the explicit equations given in the proof of Theorem \ref{t:flatness}. (Details are discussed in Lemma \ref{l:reduceness of local model}.)

Now assume that the discriminant locus $S$ is good (Definition \ref{d:good}). Let $\pi$ denote the structure morphism $Y_{S}\to \mP^3_{\C}$. Consider the restricted Brauer-Severi surface bundle:
$$\pi^{'}:\pi^{-1}(\mP^3_{\C}-S)\to \mP^3_{\C}-S,$$ 
the base here is clearly irreducible. Since each fiber is a smooth Brauer-Severi surface, of the same dimension, and $\pi^{'}$ is projective, hence closed, we conclude that $\pi^{-1}(\mP^3_{\C}-S)$ is irreducible. Now if $Y_{S}$ is reducible, then by the argument above, $\pi^{-1}(S)$ is reducible. Hence there exist an irreducible component $S_i$ of $S$, such that $\pi^{-1}(S_i)$ is reducible (note that $Y_S$ is connected). But this is a contradiction to part $(3)$ of Definition \ref{d:good}.
\end{proof}

\section{The Specialization method and Desingularization}
\label{s:6}
In this section, we apply a specialization method introduced by Voisin in \cite{MR3359052}. It was further developed by Colliot-Th\'{e}l\`{e}ne and Pirutka in \cite{MR3481353} and modified by Schreieder in \cite[Proposition~26]{MR3909896}. We use this last version below as it is most suitabe to our example. We also refer to \cite[Section~2]{MR3849287} for a brief introduction of the Specialization method. The main difficulty of applying this to Example \ref{e} is that we need to construct an explicit desingularization 
$$f:\tilde{Y}\to Y$$
so that for all field extension $L/\C$, $f$ induces an isomorphism :
$$f_*:\CH_{0}(\tilde{Y}_L)\to \CH_{0}(Y_L)$$

Recently, Schreieder gave an alternate approach in a series of papers: \cite[Proposition~26]{MR3909896} and \cite{MR4013741}. Instead of constructing such a desingularization, Schreieder's result allows a purely cohomological criteria. Guided by his idea, we have the following known lemma(e.g. \cite[Proposition~4.8(a)]{schreieder2021unramifiedcohomologyalgebraiccycles}):

\begin{lemma}
\label{l:restriction}
Let $Y$ be a projective variety over a field $k$. Let $E\subset Y$ be an irreducible subvariety such that the local ring of $Y$ at the generic point $\eta_{E}$ of $E$, denoted by $\mathscr{O}_{Y,\eta_{E}}$, is a regular local ring.  Then there exists a restriction map:
$$\Res^{Y}_{E}:H^2_{nr}(k(Y)/k,\Z/3)\to H^2(k(E),\Z/3)$$
\end{lemma}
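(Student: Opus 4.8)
The plan is to extend an unramified class from $k(Y)$ to the regular local ring $A := \mathscr{O}_{Y,\eta_E}$ by purity, and then restrict it along the closed point of $\Spec A$. Note first that $A$ is a regular local ring with fraction field $k(Y)$ and residue field $k(E)$ (the closed point $\eta_E$ having residue field $\kappa(\eta_E)=k(E)$), and since $Y$ is a variety over $k$ it is essentially of finite type over $k$; in particular $A$ is an excellent regular local ring containing a field, so the Bloch--Ogus--Gabber purity theorem (Bloch--Ogus \cite{MR412191} when $k$ is perfect, and Gabber's theorem in general) furnishes an exact Gersten sequence
\[
0 \longrightarrow H^2_{\text{ét}}(\Spec A,\Z/3) \longrightarrow H^2(k(Y),\Z/3) \xrightarrow{\ \oplus_{\mathfrak p}\partial_{\mathfrak p}\ } \bigoplus_{\operatorname{ht}\mathfrak p = 1} H^1(\kappa(\mathfrak p),\Z/3),
\]
where $\mathfrak p$ ranges over the height-one primes of $A$ and $\partial_{\mathfrak p}$ is the residue map attached to the discrete valuation ring $A_{\mathfrak p}$. (We use here that $3$ is invertible in $k$, the standing hypothesis.)

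First I would check that each $A_{\mathfrak p}$, for $\mathfrak p$ of height one, defines a divisorial valuation of $k(Y)$ that is trivial on $k$: its center on $Y$ is $\overline{\{\mathfrak p\}}$, a prime divisor of $Y$ containing $E$, so $\operatorname{trdeg}_k \kappa(\mathfrak p) = \dim Y - 1$. Consequently, if $\alpha \in H^2_{nr}(k(Y)/k,\Z/3)$ then $\partial_{\mathfrak p}(\alpha) = 0$ for every such $\mathfrak p$, directly from Definition \ref{d:unramified}. By exactness of the displayed sequence, $\alpha$ is the image of a class $\tilde\alpha \in H^2_{\text{ét}}(\Spec A,\Z/3)$, and this lift is unique since the first map is injective ($A$ being a domain with fraction field $k(Y)$).

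Finally I would set $\Res^Y_E(\alpha) := \iota^*\tilde\alpha \in H^2_{\text{ét}}(\Spec k(E),\Z/3) = H^2(k(E),\Z/3)$, where $\iota\colon \Spec k(E) \hookrightarrow \Spec A$ is the inclusion of the closed point. Well-definedness is immediate from the uniqueness of $\tilde\alpha$, and the resulting map is a group homomorphism because $\alpha \mapsto \tilde\alpha$ is additive (again by uniqueness of the lift) and $\iota^*$ is a homomorphism of cohomology groups. The only substantive ingredient is the purity/Gersten statement for the excellent regular local ring $A$; everything else is formal. The point that needs care is precisely invoking purity in the generality required here --- over an arbitrary (possibly imperfect) field and for a regular local ring of arbitrary dimension --- rather than only its codimension-one case or its smooth-over-$k$ case.
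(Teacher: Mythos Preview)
Your proposal is correct and follows essentially the same approach as the paper: lift the unramified class to $H^2_{\text{\'et}}(\Spec\mathscr{O}_{Y,\eta_E},\Z/3)$ via the purity/Gersten exact sequence for the regular local ring (the paper cites \cite[Theorem~3.8.3]{MR1327280} for this), then restrict along the closed point by functoriality of \'etale cohomology. Your write-up is more explicit about uniqueness of the lift and about the generality of purity required over an arbitrary field, but the structure of the argument is identical.
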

\begin{proof}
Let $\alpha\in H^2_{nr}(k(Y)/k,\Z/3)$ be an unramified Brauer class (Def \ref{d:unramified}).
Notice that by assumption, $\mathscr{O}_{Y,\eta_{E}}$ is a regular local ring with residue field $k(E)$ and fraction field $k(Y)$.
We have the following diagram:
      \[ 
  \begin{tikzcd}[row sep= 1 em, column sep= 1 em]
0 \arrow[d]\\
H^2(\mathscr{O}_{Y,\eta_{E}},\Z/3)\arrow[d]\arrow[r] & H^2(k(E),\Z/3)\\
H^2(k(Y),\Z/3) \arrow[d,"\oplus \partial^2_{\nu}"] \arrow[,ur,dotted]\\
\bigoplus H^1(k(\nu),\Z/3)\\
  \end{tikzcd}
  \]
here the left column is given by \cite[Theorem~3.8.3]{MR1327280}. The horizontal map is given by the functoriality in \'{e}tale cohomology. Now that $\alpha$ is an unramified Brauer class, it is killed by  $\oplus \partial^2_{\nu}$. Hence $\alpha$ comes from a class in $H^2(\mathscr{O}_{Y,\eta_{E}},\Z/3)$, which can be further mapped to $H^2(k(E),\Z/3)$ by the horizontal map. 
\end{proof}

We use notation in Example \ref{e} and Lemma \ref{l:restriction}. Let $U\subset Y$ be the smooth locus of $Y$. Let $\alpha_1\in Br(\C(\mP^3_{\C}))[3]$ be the Brauer class which has nontrivial residue $\gamma_1$ along $S_1$, and trivial residues everywhere else. By Lemma \ref{l:residue}, $\alpha_1$ can be represents by the cyclic algebra $$(\frac{x_2^3-x_3^3}{x_0^{3}},\frac{F_{S_1}}{x_0^9})_{\omega}.$$ By arguments in Example \ref{e}, $\alpha_1$ can be lifted to a nontrivial unramified Brauer class $$\tilde{\alpha}_1\in H^2_{nr}(\C(Y)/\C,\Z/3)$$

Now we prove that the second hypothesis in \cite[Proposition~26]{MR3909896} is true in our case:
\begin{lemma}
\label{l:restriction to E}
Let $\pi:Y\to \mP^3_{\C}$ be the Brauer-Severi surface bundle in Example \ref{e}. Let $U$ be the smooth locus of $Y$. Then there exists a resolution of singularities $f:\tilde{Y}\to Y$, such that for each irreducible component $E$ of $\tilde{Y}-f^{-1}(U)$, $\Res^{\tilde{Y}}_{E}(\tilde{\alpha}_1)$ is trivial.
\end{lemma}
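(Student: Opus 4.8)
The plan is to choose the resolution so that $\tilde Y\setminus f^{-1}(U)=f^{-1}(Y_{\mathrm{sing}})$ and then to use that $Y$ is singular only over a very small locus of the base. By Hironaka's theorem I would take $f\colon\tilde Y\to Y$ to be a resolution that is an isomorphism over $U$. Reading off the case analysis in the proof of Theorem~\ref{t:flatness}, $Y$ is smooth over $\mP^3_{\C}\setminus(S_1\cup S_2)$, over the smooth points of $S_2$ off $S_1$, and over the smooth points of $S_1$ off $S_1\cap S_2$ and off $\{x_2^3=x_3^3\}$. Hence $Y_{\mathrm{sing}}\subseteq\pi^{-1}(Z)$ for a closed $Z\subseteq\mP^3_{\C}$ contained in $\mathrm{Sing}(S_1)\cup\mathrm{Sing}(S_2)\cup(S_1\cap S_2)\cup(S_1\cap\{x_2^3=x_3^3\})$, so in particular $\dim Z\le 1$.

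Now let $E$ be an irreducible component of $f^{-1}(Y_{\mathrm{sing}})$; since $\tilde Y$ is regular the restriction $\Res^{\tilde Y}_E(\tilde\alpha_1)$ is defined (Lemma~\ref{l:restriction}). If $\dim E\le 1$ then $\C(E)$ has transcendence degree $\le 1$ over $\C$, so $H^2(\C(E),\Z/3)=0$ and there is nothing to prove. Otherwise set $V:=\overline{(\pi\circ f)(E)}\subseteq Z$, so $\dim V\le 1$, and observe that $\pi\circ f$ sends $\eta_E$ to $\eta_V$, giving a local homomorphism $\mathscr O_{\mP^3_{\C},\eta_V}\to\mathscr O_{\tilde Y,\eta_E}$. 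I would then split according to whether $V$ lies in $S_1$, in $S_2$, or in $S_1\cap S_2$ — the trichotomy familiar from the unramifiedness arguments. If $V\not\subseteq S_1$: since $\alpha_1$ ramifies only along $S_1$, it has trivial residue along every prime divisor of $\mP^3_{\C}$ through $V$, so by purity $\alpha_1\in H^2(\mathscr O_{\mP^3_{\C},\eta_V},\Z/3)$ and it restricts to $\alpha_1|_V\in H^2(\C(V),\Z/3)=0$; pulling this integral class back along the local homomorphism (the commutative square in the proof of Lemma~\ref{l:restriction to divisors}) gives $\Res^{\tilde Y}_E(\tilde\alpha_1)=0$. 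If $V\subseteq S_1$ but $V\not\subseteq S_2$: put $\alpha_2:=[\mathscr A]-\alpha_1$, which by the residue computation of Example~\ref{e} ramifies only along $S_2$; Amitsur's theorem applied to the generic fibre of $\pi$ (a smooth Brauer--Severi surface with class $[\mathscr A]$) gives $\pi^*[\mathscr A]=0$, whence $\tilde\alpha_1=-\pi^*\alpha_2$, and the previous argument runs with $(\alpha_2,S_2)$ in place of $(\alpha_1,S_1)$.

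The remaining case $V\subseteq S_1\cap S_2$ is where the specific geometry of Example~\ref{e} is genuinely used: $V$ lies in a component $D_j$ of $S_1\cap S_2$, and $\gamma_1|_{D_j}$ is trivial in $H^1(\C(D_j),\Z/3)$. When $\dim V=1$ (so $V=D_j$): as $\alpha_1$ ramifies only along $S_1$, purity on the $2$-dimensional regular local ring $\mathscr O_{\mP^3_{\C},\eta_{D_j}}$ lets me write $\alpha_1=(u,\pi_{S_1})_\omega+\beta$ with $\pi_{S_1}$ a local equation of $S_1$, $\beta\in H^2(\mathscr O_{\mP^3_{\C},\eta_{D_j}},\Z/3)$, and $u$ a unit lifting a representative of $\gamma_1$; triviality of $\gamma_1|_{D_j}$ lets me adjust $u$ by a cube so that $u\equiv 1$ modulo the maximal ideal. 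Then $\pi^*\beta$ restricts to $E$ through $\beta|_{D_j}\in H^2(\C(D_j),\Z/3)=0$, and $(\pi^*u,\pi^*\pi_{S_1})_\omega$ restricts to $0$ on $\C(E)$ because $\pi^*u\in 1+\mathfrak m_{\eta_E}$ is a cube in the completed local ring $\C(E)[[t]]$ and the leftover unit symbol involves a $1$; hence $\Res^{\tilde Y}_E(\tilde\alpha_1)=0$. The case $\dim V=0$ I would reduce to this one by further arranging that $f$ factors through a log resolution $b\colon\mP'\to\mP^3_{\C}$ of $(\mP^3_{\C},S_1\cup S_2)$: on $\mP'$ the class $b^*\alpha_1$ is still ramified only along the strict transform of $S_1$, since its residue along the exceptional divisor over $D_j$ equals $\partial^1_{D_j}(\gamma_1)=0$ (Example~\ref{e}), so that every component $E$ as above either lies over a curve to which the $\dim V=1$ analysis applies, or, when it dominates the strict transform of $S_2$, is handled directly by Lemma~\ref{l:restriction to divisors}.

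The hard part is exactly this last case — the bookkeeping of the local decomposition of $\alpha_1$ at $\eta_{D_j}$, and above all the zero-dimensional sub-case, where the natural representative $(x_2^3-x_3^3)/x_0^3$ of $\gamma_1$ degenerates at the singular points of $S_1$ lying on $D_j$ and a preliminary blow-up of $\mP^3_{\C}$ is really needed. The computation is kept in check by the explicit presentation $\alpha_1=\bigl((x_2^3-x_3^3)/x_0^3,\ F_{S_1}/x_0^9\bigr)_\omega$ (identified as in Example~\ref{e}, since this symbol has residue $\gamma_1$ along $S_1$ and, after checking $\{x_0=0\}$ and $\{x_2^3=x_3^3\}$, trivial residue elsewhere) together with the vanishing of every residue $\partial^1_C(\gamma_1)$ along curves $C$ in $S_1$, which is precisely what forces the local symbol computations to collapse. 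Throughout, one uses that $\tilde\alpha_1\in H^2_{nr}(\C(Y)/\C,\Z/3)=H^2_{nr}(\C(\tilde Y)/\C,\Z/3)$, established before the statement, so that all the restriction maps above make sense.
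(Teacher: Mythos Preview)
Your approach differs substantially from the paper's. You run a case analysis on the position of $V:=\overline{(\pi\circ f)(E)}$ relative to $S_1$ and $S_2$, using purity over the base together with $H^2(\C(V),\Z/3)=0$ (Tsen, since $\dim V\le 1$) to kill the restriction in the two easy cases, and then appealing to an explicit symbol decomposition and a log resolution of the base in the hard case $V\subseteq S_1\cap S_2$. The paper's argument is uniform and avoids all of this: since $\dim\pi(f(E))\le 1$, one may choose any surface $S_0\neq S_1$ in $\mP^3_\C$ containing $\pi(f(E))$, take an irreducible component $D_E$ of $\pi^{-1}(S_0)$ through $f(E)$, replace $\tilde Y$ by a further blow-up so that the strict transform $\tilde D_E$ is regular, and then chain
\[
\Res^{\tilde Y}_E(\tilde\alpha_1)=\Res^{\tilde D_E}_E\bigl(\Res^{\tilde Y}_{\tilde D_E}(\tilde\alpha_1)\bigr)=\Res^{\tilde D_E}_E(0)=0,
\]
invoking Lemma~\ref{l:restriction to divisors} for the inner restriction. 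No knowledge of where $V$ sits inside $S_1\cup S_2$, and no symbol computation, is needed.

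Your Cases $V\not\subseteq S_1$ and $V\subseteq S_1,\ V\not\subseteq S_2$ are correct. In the remaining case your $\dim V=1$ subcase is essentially right, though the phrase ``completed local ring $\C(E)[[t]]$'' presumes $E$ is a divisor; in general $\mathscr O_{\tilde Y,\eta_E}$ has dimension $5-\dim E$, so you should pass to the multivariable power series ring (Hensel still gives that $1+\mathfrak m$ consists of cubes, and the rest goes through). The genuine gap is the $\dim V=0$ subcase: the assertion that on a log resolution $b\colon\mP'\to\mP^3_\C$ the class $b^*\alpha_1$ is ramified only along the strict transform of $S_1$ is not justified by checking the exceptional divisor over $D_j$ alone. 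A log resolution of $(\mP^3_\C,S_1\cup S_2)$ requires blow-ups at many other centers---in particular at the singular points of $S_1$ on $D_j$ that you yourself flag as problematic---and you have not computed the residues of $\alpha_1$ along those exceptional divisors. This can presumably be patched by redoing the Case~4--5 analysis from the proof of Theorem~\ref{t:main} at each such point, but that is precisely the work the paper's auxiliary-surface trick renders unnecessary.
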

\begin{proof}
The existence of resolution of singularities of $Y$ is guaranteed by Hironaka's theorem \cite{MR0199184}. We can further assume without loss of generality that each $E$ is a prime divisor of $\tilde{Y}$. 

Recall that $Y$ has singular locus of codimension at least 2. So for every irreducible component $E$ of $\tilde{Y}-f^{-1}(U)$, $f(E)$ has dimension at most $3$. On the other hand, since that the generic fiber of $\pi$ is smooth, and the generic fiber over each irreducible component of the discriminant locus (namely $S_1\cup S_2$) is the union of three standard Hirzebruch surfaces $\mathbb{F}_1$ described in Definition \ref{d:bundle}. We know $\pi(f(E))$ has dimension at most 1 (This follows from that local model of $\pi$ over $S_1$ and $S_2$ is smooth, see the discussion in Theorem \ref{t:flatness}). In other words, each $E$ in $\tilde{Y}$ would dominate a curve or a point in $S_1\cup S_2$. 

In the following of this proof, let $K=\C(\mP^3_{\C})$ be the function field of $\mP^3_{\C}$. Denote by $p_E$ the generic point of $\pi(f(E))$, by $K_{P_E}$ the field of fractions of the regular complete local ring $\hat{\mathscr{O}}_{\mP^3_{\C},p_E}$. We give a case by case argument according to the generic point $p_E\in \mP^3_{\C}$ of $\pi(f(E))$:


\begin{itemize}
    \item[\bf Case 1:]{\it $p_E\notin S_1$}. Then $\Res^{\tilde{Y}}_{E}(\tilde{\alpha}_1)=0$ simply follows from the fact that $p_E$ does not belongs to the discriminant locus of $\alpha_1$ (see e.g. the proof of \cite[Proposition~5.1(2)]{MR4013741}).
    \item[\bf Case 2:]{\it $p_E\in S_1-S_2$}. Consider the following commutative diagram coming from functoriality:
    \[ 
  \begin{tikzcd}[row sep= 1 em, column sep= 1 em]
H^2(K(Y_K),\Z/3\Z)\arrow[r] & H^2(K_{p_E}(Y_K),\Z/3\Z)\\
 H^2(K,\Z/3\Z) \arrow[u]\arrow[r] & H^2(K_{p_E},\Z/3\Z)\arrow[u]\\
  \end{tikzcd}
  \]
  Because $\tilde{\alpha}_1$ is unramified, by \cite[Proposition~2.5]{pirutka2023cubicsurfacebundlesbrauer}, it suffices to check $\alpha_1=0$ in 
  $H^2(K_{p_E}(Y_K),\Z/3\Z)$. Indeed, as $Y_K$ is the Brauer-Severi surface associate to the cyclic algebra $\mathscr{A}=(\frac{F_{S_2}(x_2^3-x_3^3)}{x_0^{21}},\frac{F_{S_1}}{x_0^9})_{\omega}$ and $F_{S_2}$ is a nonzero cube when $F_{S_1}=0$ in the residue field of $\mathscr{O}_{\mP^3_{\C},p_E}$. By Cohen's structure theorem \cite[Theorem~15]{5953cc44-5a9f-317f-90f3-83c005ff4b88}, the residue field embeds into $K_{P_E}$. Hence, after taking base changes to $K_{p_E}$,
  $$(\frac{F_{S_2}(x_2^3-x_3^3)}{x_0^{21}},\frac{F_{S_1}}{x_0^9})_{\omega}\cong (\frac{x_2^3-x_3^3}{x_0^{3}},\frac{F_{S_1}}{x_0^9})_{\omega}.$$
  This implies that $Y_K$ is also the Brauer-Severi surface associate to $\alpha_1$, we conclude that $\alpha_1=0$ in $H^2(K_{p_E}(Y_K),\Z/3\Z)$ by Amitsur's theorem \cite[Theorem~5.4.1]{MR2266528}.
    \item[\bf Case 3:]{\it $p_E\in S_1\cap S_2$ is a closed point, and $p_E\notin \{x_2^3-x_3^3=0\}$.} Then notice $\alpha_1$ can be represented by the cyclic algebra 
    $$(\frac{x_2^3-x_3^3}{x_0^{3}},\frac{F_{S_1}}{x_0^9})_{\omega}\cong (\frac{(x_2-x_3)^6}{(x_2^3-x_3^3)^2},\frac{F_{S_1}}{(x_2^3-x_3^3)^3})_{\omega}.$$
    Then $\frac{(x_2-x_3)^6}{(x_2^3-x_3^3)^2}$ is nonzero in the residue field of $\mathscr{O}_{\mP^3_{\C},p_E}$, which is $\C$. Hence $\frac{(x_2-x_3)^6}{(x_2^3-x_3^3)^2}$ is a cube in the residue field as $\C$ is algebraically closed. By Cohen's structure theorem, the residue field $\C$ embeds into $K_{P_E}$, hence $\frac{(x_2-x_3)^6}{(x_2^3-x_3^3)^2}$ is also a cube in $K_{P_E}$. This shows that $\alpha_1$ is trivial in $H^2(K_{P_E},\Z/3\Z)$. By the same commutative diagram as in case 2,
it is clear that $\alpha_1=0$ in  $H^2(K_{p_E}(Y_K),\Z/3\Z)$. We then have $\Res^{\tilde{Y}}_{E}(\tilde{\alpha}_1)=0$ by \cite[Proposition~2.5]{pirutka2023cubicsurfacebundlesbrauer}.
\item[\bf Case 4:]{\it $p_E$ is one of $[0:0:1:1]$,$[0:0:1:\omega]$,$[0:0:1:\omega^2]$.} In these cases, by the discussions in Case 4 of the proof of Theorem \ref{t:main}, we can choose another appropriate representing algebras of $\alpha_1$:
$$(\frac{x_2^6}{(x_1^3-x_2^3)(x_3^3-x_1^3)},\frac{F_{S_1}}{x_0^9})_{\omega}.$$
It is straight forward to check this is a representing algebra of $\alpha_1$ by applying Lemma \ref{l:residue}. And $\frac{x_2^6}{(x_1^3-x_2^3)(x_3^3-x_1^3)}$ is a nontrivial unit in the residue field of $\mathscr{O}_{\mP^3_{\C},p_E}$, which is $\C$. Hence the remaining proof can be done exactly same as in Case 3.
\item[\bf Case 5:]{\it $p_E=[0:1:0:0]$.} In this case, the proof are the same as in Case 4, by using the following representing algebra of $\alpha_1$:
$$(\frac{x_1^6}{(x_1^3-x_2^3)(x_3^3-x_1^3)},\frac{F_{S_1}}{x_0^9})_{\omega}.$$

\item[\bf Case 6:]{\it $p_E$ is one of the generic point of $D_1$, $D_2$ and $D_3$(Example \ref{e}).} 
Note that by the defining equations of $D_1,D_2$ and $D_3$, $\frac{x_2^3-x_3^3}{x_0^{3}}$ is always a nontrivial cube in the residue field of $\mathscr{O}_{\mP^3_{\C},p_E}$. Again as the residue field embeds into $K_{p_E}$, we get $\alpha_1=0$ in $H^2(K_{p_E}(Y_K),\Z/3\Z)$.
\end{itemize}
This completes the proof.
\end{proof}
 \section{Main result}
 \label{s:7}
 With Example \ref{e}, we prove Theorem \ref{t:final}:
 \primetheorem*
\begin{proof}
By  \cite[Proposition~26]{MR3909896} and Lemma \ref{l:restriction to E},we know the Brauer-Severi surface bundle constructed in Example \ref{e} can be used as a reference variety.

 To finish the proof, we need to construct a flat family of Brauer-Severi surface bundles over $\mP^3_{\C}$ with Example \ref{e} as one closed fiber with smooth general fiber. 
Start with the cyclic algebra from Example \ref{e}:
$$\mathscr{A}=(\frac{F_{S_2}(x_2^3-x_3^3)}{x_0^{21}},\frac{F_{S_1}}{x_0^9})_{\omega}$$
We consider two regular surfaces in $\mP^3_{\C}$:
$$G_1=\{x_0^9-x_1^9+x_2^8x_3+x_3^8x_2=0\}$$
$$G_2=\{x_0^{21}+x_1^{21}+x_2^{21}-x_3^{21}=0\}$$
By Lemma \ref{l:transversally}, both $G_1$ and $G_2$ are regular surfaces in $\mP^3_{\C}$, and they intersect transversally. Consider the following pencil of cyclic algebras:

$$\mathscr{A}_{[t_0:t_1]}=(\frac{t_0F_{S_2}(x_2^3-x_3^3)+t_1(G_2-F_{S_2}(x_2^3-x_3^3))}{x_0^{21}},\frac{t_0F_{S_1}+t_1(G_1-F_{S_1})}{x_0^9})_{\omega} $$
We denote 
$$t_0F_{S_2}(x_2^3-x_3^3)+t_1(G_2-F_{S_2}(x_2^3-x_3^3))$$ and 
$$t_0F_{S_1}+t_1(G_1-F_{S_1})$$ 
by $F_{S_2}^{[t_0:t_1]}$ and $F_{S_1}^{[t_0:t_1]}$ respectively. By Lemma \ref{l:singularties of F1G1} and Lemma \ref{l:singularties of F2G2}, when $[t_0:t_1]\neq [1:0]$, both
$F_{S_2}^{[t_0:t_1]}=0$ and $F_{S_1}^{[t_0:t_1]}=0$ are irreducible surfaces in $\mP^3$. Using Lemma \ref{l:residue}, the induced triple covers are given by 
$$\gamma_1^{[t_0:t_1]}=\frac{F_{S_2}^{[t_0:t_1]}}{x_0^{21}},\gamma_2^{[t_0:t_1]}=\frac{x_0^{9}}{F_{S_1}^{[t_0:t_1]}}$$
Similar to the discussion in Example \ref{e}, note that the residue of $\gamma_1^{[t_0:t_1]}$ of the valuation centered at the point $[0:\xi:1:1]$ is $1\in \Z/3$, where $\xi$ satisfies $\xi^9-2=0$. And the residue of $\gamma_2^{[t_0:t_1]}$ of the valuation centered at the point $[0:\psi:1:-1]$ is $2\in \Z/3$, where $\psi^{21}+2=0$. Hence both $\gamma_1^{[t_0:t_1]}$ and $\gamma_2^{[t_0:t_1]}$ are irreducible. By Corollary \ref{c:integral total space}, for any $[t_0:t_1]\in \mP^1_{\C}$, there exists an integral Brauer-Severi surface bundle $\mathscr{Y}_{[t_0:t_1]}\to \mP^3_{\C}$. 

By viewing $\mathscr{A}_{[t_0:t_1]}$ as a simple algebra over $\mP^1_{\C}\times \mP^3_{\C}$ and applying the construction of Theorem \ref{t:flatness} again, we have a Brauer-Severi surface bundle over $\mP^1_{\C}\times \mP^3_{\C}$ which can be viewed as a 1 dimensional family of of Brauer-Severi surface bundles over $\mP^3_{\C}$.
Let $\mathscr{Y}\to \mP^1_{\C}$ denote this family of Brauer-Severi surface bundles. We claim this is indeed a flat family. By \cite[Proposition~9.7]{MR0463157}, It is sufficient to check $\mathscr{Y}$ is integral. $\mathscr{Y}$ is irreducible because each closed fiber is an irreducible variety and the morphism $\mathscr{Y}\to \mP^1_{\C}$ is projective, hence closed. Now let $\hat{\mathscr{Y}}$ be the closed sub-scheme of $\mathscr{Y}$ with the same underlying topological space equipped with the reduced scheme structure, we have the following Cartesian diagram:

   \[ 
  \begin{tikzcd}[row sep= 1 em, column sep= 1 em]
\hat{\mathscr{Y}}_{[t_0:t_1]} \arrow[d,"i_{[t_0:t_1]}"]\arrow[r] & \hat{\mathscr{Y}} \arrow[d,"i"]\\
\mathscr{Y}_{[t_0:t_1]}\arrow[d]\arrow[r] &\mathscr{Y} \arrow[d]\\
\{[t_0:t_1]\} \arrow[r]& \mP^1_{\C}\\
  \end{tikzcd}
  \]

 On one hand, $i_{[t_0:t_1]}$ is a homeomorphism on topological spaces as the pullback of schemes by monomorphism coincide with topological pullback according to the explicit construction of fiber product of ringed spaces. On the other hand, $i_{[t_0:t_1]}$ is a closed immersion as a base change of the closed immersion $i$. Since $\mathscr{Y}_{[t_0:t_1]}$ is reduced as discussed above, we know $i_{[t_0:t_1]}$ is an isomorphism.

 Finally, by replacing $\mathscr{Y}$ by $\hat{\mathscr{Y}}$ if necessary, we get a 1 dimensional flat family of Brauer-Severi surface bundles over $\mP^3_{\C}$, with a special fiber $\mathscr{Y}_{[1:0]}$ (Example \ref{e} and Lemma \ref{l:restriction to E}) and a regular fiber $\mathscr{Y}_{[1:1]}$ (\cite[Theorem~2.1]{MR1480776}), hence we are done.

\end{proof}

\appendix
\section{calculations}
\label{s:A}
\begin{lemma}
\label{l:singulocus of P1P2}
Let $P_1=P_1(x_1,x_2,x_3)=(x_1^3-x_2^3)(x_2^3-x_3^3)(x_3^3-x_1^3)$, let $P_2=P_2(x_1,x_2,x_3)=x_1^3x_2^3x_3^3$. Then for any $[t_1:t_2]\neq [0:1]$ or $[1:0]$ or $[1:\sqrt{-27}]$ or $[1:-\sqrt{-27}]$, singular locus of the curve 
$t_1P_1+t_2P_2=0$ in $\mP^2_{\C}=\Proj \C[x_1,x_2,x_3]$ are precisely the three points:
$$[1:0:0],[0:1:0],[0:0:1]$$
\end{lemma}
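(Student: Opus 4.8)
The plan is to determine the singular locus of $C=\{t_1P_1+t_2P_2=0\}$ by separating points according to their incidence with the three coordinate lines $\{x_i=0\}$, using throughout the cyclic symmetry $x_1\mapsto x_2\mapsto x_3\mapsto x_1$, which fixes both $P_1$ and $P_2$ and hence the whole pencil. For the inclusion $\supseteq$: working in the chart $x_1=1$ one sees that $P_1(1,x_2,x_3)=-(x_2^3-x_3^3)+(\text{terms of order}\geq 6)$ whereas $P_2(1,x_2,x_3)=x_2^3x_3^3$ has order $6$ at the origin, so $t_1P_1+t_2P_2$ has multiplicity at least $3$ at $[1:0:0]$ for every $[t_1:t_2]$; the symmetry then gives the other two coordinate points. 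For the reverse inclusion, since the classes $[0:1]$ and $[1:0]$ are excluded we may normalize $t_1=1$ and set $t=t_2\in\C\setminus\{0,\pm\sqrt{-27}\}$, writing $C_t=\{P_1+tP_2=0\}$; it remains to show that $C_t$ is smooth away from $[1:0:0],[0:1:0],[0:0:1]$.

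\emph{Points on a coordinate line.} Suppose such a point lies on $\{x_1=0\}$ but is not a coordinate point. Since $P_2$ vanishes to order $3$ along $\{x_1=0\}$, the gradient $\nabla P_2$ vanishes identically there, so the point is singular on $C_t$ if and only if it is singular on $\{P_1=0\}$. But $C_t\cap\{x_1=0\}$ is cut out by $x_2^3x_3^3(x_2^3-x_3^3)=0$, so its only non-coordinate points are the three points $[0:1:\zeta]$ with $\zeta^3=1$, and each of these lies on exactly one of the nine distinct lines comprising the reduced curve $\{P_1=0\}$ — hence is a smooth point. The cyclic symmetry disposes of $\{x_2=0\}$ and $\{x_3=0\}$.

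\emph{Points off all three coordinate lines.} Here I would use the finite morphism $\phi\colon\mP^2_{\C}\to\mP^2_{\C}$, $[x_1:x_2:x_3]\mapsto[x_1^3:x_2^3:x_3^3]$, which is étale away from the coordinate lines and satisfies $P_1=\phi^{*}\big((u-v)(v-w)(w-u)\big)$ and $P_2=\phi^{*}(uvw)$. Thus $C_t=\phi^{-1}(Q_t)$, where $Q_t\colon(u-v)(v-w)(w-u)+t\,uvw=0$ is a plane cubic in coordinates $[u:v:w]$, and étaleness reduces the problem to showing that $Q_t$ has no singular point with $uvw\neq 0$.

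\emph{The core computation — the main obstacle.} Writing out $\partial_uQ_t,\partial_vQ_t,\partial_wQ_t$ and adding the three equations, one finds that a singular point of $Q_t$ must satisfy $uv+vw+wu=0$ (this uses $t\neq 0$). If two of $u,v,w$ are equal, then one of the three partial-derivative equations forces $t=0$ or the point onto a coordinate line, both excluded. If $u,v,w$ are distinct and $u+v+w\neq 0$, normalize $u+v+w=1$; then $uv+vw+wu=0$ yields the identity $(v-w)^2=(1+3u)(1-u)$, and squaring $\partial_uQ_t=0$ and dividing by the factor $1-u$ (nonzero off the coordinate lines) shows that each of $u,v,w$ is a root of the cubic $(27+t^2)X^3-(9+t^2)X^2-3X+1$. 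Hence this cubic equals $(27+t^2)\big(X^3-X^2-uvw\big)$, which is absurd — compare the coefficients of $X^2$ or of $X$ — provided $t^2\neq-27$. Finally, when $u+v+w=0$ (together with $uv+vw+wu=0$) the set $\{u,v,w\}$ is a scalar multiple of $\{1,\omega,\omega^2\}$, and substituting the point $[1:\omega:\omega^2]$ into $\partial_uQ_t$ shows that $Q_t$ is singular there exactly when $t=\pm\sqrt{-27}$ — precisely the two classes that had to be excluded. Assembling these cases shows that for the admissible $[t_1:t_2]$ the singular locus of $C$ is exactly $\{[1:0:0],[0:1:0],[0:0:1]\}$.
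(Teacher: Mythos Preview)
Your argument is correct and takes a genuinely different route from the paper's.

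The paper works directly with the degree-$9$ polynomials: after disposing of points on the coordinate lines, it uses the identity $x_i\,\partial_{x_i}P_2=3P_2$ together with Euler's relation to obtain $x_i\,\partial_{x_i}P_1=3P_1$ for each $i$, which simplifies to $(x_j^3-x_k^3)(x_j^3x_k^3-x_i^6)=0$; since $P_1\neq 0$ this gives $x_j^3x_k^3=x_i^6$ for all $i$. Substituting back, the three partial-derivative equations become the linear system
\[
3t_1(x_3^3-x_2^3)+t_2x_1^3=0,\quad 3t_1(x_1^3-x_3^3)+t_2x_2^3=0,\quad 3t_1(x_2^3-x_1^3)+t_2x_3^3=0,
\]
whose coefficient determinant is $-t_2(27t_1^2+t_2^2)$, yielding the excluded values in one stroke.

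Your approach instead pulls everything through the \'etale cube map $\phi:[x_1:x_2:x_3]\mapsto[x_1^3:x_2^3:x_3^3]$, reducing the question (off the coordinate lines) to locating singular points of the pencil of plane \emph{cubics} $Q_t:(u-v)(v-w)(w-u)+tuvw=0$. The symmetric-function analysis --- first $e_2=0$ from summing the partials, then Vieta for the ``distinct, $e_1\neq 0$'' case, then direct substitution at $[1:\omega:\omega^2]$ for the ``$e_1=e_2=0$'' case --- is clean and makes the role of $t^2=-27$ transparent as the singular members of the cubic pencil. Two small remarks: in your Vieta step the clause ``provided $t^2\neq -27$'' is not actually needed, since comparing the $X$-coefficients already gives $-3=0$ regardless (and if $27+t^2=0$ the polynomial has degree $\leq 2$, so three distinct roots is impossible anyway); and in the final case you should note that there are \emph{two} candidate points, $[1:\omega:\omega^2]$ and $[1:\omega^2:\omega]$, related by the transposition $v\leftrightarrow w$ (which sends $Q_t$ to $-Q_{-t}$), so that one is singular for $t=\sqrt{-27}$ and the other for $t=-\sqrt{-27}$. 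Neither point affects the correctness of the conclusion.
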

\begin{proof}
Taking partial derivatives: 
$$t_1\frac{\partial P_1}{\partial x_1}+t_2\frac{\partial P_2}{\partial x_1}=0$$
$$t_1\frac{\partial P_1}{\partial x_2}+t_2\frac{\partial P_2}{\partial x_2}=0$$
$$t_1\frac{\partial P_1}{\partial x_3}+t_2\frac{\partial P_2}{\partial x_3}=0$$
Consider a point with $x_1=0$ lying in the curve. Then $P_2=0$, and this forces $P_1=x_2^3x_3^3(x_3^3-x_2^3)=0$ as $t_1\neq 0$. If one of $x_2$ or $x_3$ is 0, then we get the singularities listed in the statement of this lemma. If not, we have $x_2^3-x_3^3=0$, and the above partial derivatives can be simplified to obtain $x_2=0$, and hence $x_3=0$, which is a contradiction. Similar calculations work when $x_2=0$ or $x_3=0$. So all singularities when some $x_i$ equal to 0 are precisely the three points listed above.

In the following we assume all of $x_1$, $x_2$ and $x_3$ are nonzero. It is clear that for $i \neq j$, we have $x_i \frac{\partial P_2}{\partial x_i}=x_j\frac{\partial P_2}{\partial x_j}$. The above partial derivatives also tell us 
$$x_1 \frac{\partial P_1}{\partial x_1}=x_2\frac{\partial P_1}{\partial x_2}=x_3\frac{\partial P_1}{\partial x_3}.$$
By Euler's theorem on homogeneous polynomial, we know 
$$9P_1=x_1 \frac{\partial P_1}{\partial x_1}+x_2\frac{\partial P_1}{\partial x_2}+x_3\frac{\partial P_1}{\partial x_3}.$$
Hence we have 
$$x_1 \frac{\partial P_1}{\partial x_1}=3P_1,$$
which simplifies to 
$$(x_2^3-x_3^3)(x_2^3x_3^3-x_1^6)=0$$
Similarly,
$$(x_3^3-x_1^3)(x_1^3x_3^3-x_2^6)=0$$
$$(x_1^3-x_2^3)(x_1^3x_2^3-x_3^6)=0$$
By our assumption here, $P_2\neq 0$, so $P_1\neq 0$. Hence $x_i^3\neq x_j^3$, so we have 
$$x_2^3x_3^3-x_1^6=0$$
$$x_1^3x_3^3-x_2^6=0$$
$$x_1^3x_2^3-x_3^6=0$$
Hence the original partial derivatives simplify to 
$$3t_1(x_3^3-x_2^3)+t_2x_1^3=0$$
$$3t_1(x_2^3-x_1^3)+t_2x_3^3=0$$
$$3t_1(x_1^3-x_3^3)+t_2x_2^3=0$$
Viewing this as three linear equations of $x_i^3$, we can calculate the determinant of the coefficient matrix as $-t_2(27t_1^2+t_2^2)$. Hence when $[t_1:t_2]$ is not one of the four cases listed in the statement, the determinant is non-zero, we know $x_1^3=x_2^3=x_0^3=0$ is the only solution, which is impossible. 
\end{proof}
\begin{lemma}
\label{l:singularlocus of S2}
Using notations in Example \ref{e}, singular locus of $S_2$ consists of three lines:
$$L_1=\{x_0=0,x_1=0\}$$
$$L_2=\{x_0=0,x_2=0\}$$
$$L_3=\{x_0=0,x_3=0\}$$
Each $D_i$ exactly passes through 5 singular points of $S_2$, they are as follows:
$$D_1:[0:0:0:1], [0:0:1:0], [0:0:1:1], [0:0:1:\omega], [0:0:1:\omega^2]$$
$$D_2:[0:0:0:1], [0:1:0:0], [0:1:0:1], [0:1:0:\omega], [0:1:0:\omega^2]$$
$$D_3:[0:0:1:0], [0:1:0:0], [0:1:1:0], [0:1:\omega:0], [0:1:\omega^2:0]$$
\end{lemma}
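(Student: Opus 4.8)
The plan is a direct Jacobian computation, organized around the compact form $F_{S_2}=x_0^{18}+P\,(x_0^9-P_2)$ already used in Example \ref{e}, where $P=P_1-P_2$, $P_1=(x_1^3-x_2^3)(x_2^3-x_3^3)(x_3^3-x_1^3)$ and $P_2=x_1^3x_2^3x_3^3$. Since $P$ and $P_2$ involve only $x_1,x_2,x_3$, one reads off $\partial_{x_0}F_{S_2}=9x_0^8(2x_0^9+P)$ and $\partial_{x_i}F_{S_2}=(\partial_{x_i}P)(x_0^9-P_2)-P\,\partial_{x_i}P_2$ for $i=1,2,3$. A point of $S_2$ is singular iff all four vanish, and I would split on whether $x_0=0$.

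If $x_0=0$, lying on $S_2$ forces $P\cdot P_2=0$. When $P_2=0$, i.e. $x_1x_2x_3=0$, one notes $\partial_{x_j}P_2=3x_j^2\prod_{l\ne j}x_l^3$ vanishes identically on $\{x_1x_2x_3=0\}$, so $\nabla P_2=0$ at the point and all the $x_i$-partials vanish; hence $\{x_0=0\}\cap\{x_1x_2x_3=0\}=L_1\cup L_2\cup L_3$ is entirely singular on $S_2$ (and plainly contained in $S_2$). When $P_2\ne 0$ we get $P=0$, so the $x_i$-partials collapse to $-P_2\,\partial_{x_i}P=0$, i.e. $\nabla(P_1-P_2)=0$; with $P_1-P_2=0$ (automatic by Euler) this says $[x_1:x_2:x_3]$ is a singular point of the plane curve $\{t_1P_1+t_2P_2=0\}$ at $[t_1:t_2]=[1:-1]$, which by Lemma \ref{l:singulocus of P1P2} must be one of $[1:0:0],[0:1:0],[0:0:1]$ — but those have $P_2=0$, a contradiction. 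So on $\{x_0=0\}$ the singular locus is exactly $L_1\cup L_2\cup L_3$.

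If $x_0\ne 0$, the $x_0$-partial gives $P=-2x_0^9$; substituting into $F_{S_2}=0$ yields $P_2=x_0^9/2$, and then the $x_i$-partials become $\tfrac{x_0^9}{2}\,\partial_{x_i}(P_1+3P_2)$, forcing $\nabla(P_1+3P_2)=0$. Applying Lemma \ref{l:singulocus of P1P2} with $[t_1:t_2]=[1:3]$ (both $[1:-1]$ and $[1:3]$ avoid the excluded ratios $[0:1],[1:0],[1:\pm\sqrt{-27}]$) forces $[x_1:x_2:x_3]\in\{[1:0:0],[0:1:0],[0:0:1]\}$, hence $P_2=0$, contradicting $P_2=x_0^9/2\ne 0$; the degenerate possibility $x_1=x_2=x_3=0$ gives $[1:0:0:0]\notin S_2$. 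Hence there are no singular points with $x_0\ne 0$, and $\mathrm{Sing}(S_2)=L_1\cup L_2\cup L_3$.

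For the singular points lying on each $D_i$, I would exploit the order-three automorphism of $\mP^3_{\C}$ that cyclically permutes $x_1,x_2,x_3$; it preserves $S_2$ and cyclically permutes $\{L_1,L_2,L_3\}$ and $\{D_1,D_2,D_3\}$, reducing the problem to $D_1=\{x_1=0,\ x_0^9-x_2^6x_3^3+x_2^3x_3^6=0\}$. Intersecting $D_1$ with $\mathrm{Sing}(S_2)$: $D_1\cap L_2=\{[0:0:0:1]\}$ and $D_1\cap L_3=\{[0:0:1:0]\}$, while inside $\{x_0=x_1=0\}$ the curve $D_1$ is cut out by $x_2^3x_3^3(x_3^3-x_2^3)=0$, giving the five points $[0:0:0:1],[0:0:1:0],[0:0:1:1],[0:0:1:\omega],[0:0:1:\omega^2]$; since $D_1$ is a plane curve of degree $9$ it is contained in no $L_j$, so these five points are precisely $D_1\cap\mathrm{Sing}(S_2)$, and the cyclic symmetry then produces the stated lists for $D_2$ and $D_3$ (or one simply repeats the computation). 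The only real difficulty is bookkeeping: pinning down the two derivative identities and reducing the two nontrivial cases to Lemma \ref{l:singulocus of P1P2}; after that everything is a short explicit computation.
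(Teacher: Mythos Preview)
Your proposal is correct and follows essentially the same route as the paper: split on whether $x_0=0$, reduce both branches to the gradient of a member of the pencil $t_1P_1+t_2P_2$, and invoke Lemma~\ref{l:singulocus of P1P2} at the parameter values $[1:-1]$ and $[1:3]$. The only cosmetic difference is that you feed the defining equation $F_{S_2}=0$ in directly (obtaining $PP_2=0$ when $x_0=0$, and $P_2=x_0^9/2$ when $x_0\neq 0$), whereas the paper instead multiplies the $x_i$-partials by $x_i$, sums, and applies Euler's formula to extract the same constraints; your bookkeeping is in fact a bit shorter. You also spell out the $D_i\cap\mathrm{Sing}(S_2)$ computation and the cyclic symmetry, which the paper leaves to the reader.
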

\begin{proof}
Let $P_1=P_1(x_1,x_2,x_3)=(x_1^3-x_2^3)(x_2^3-x_3^3)(x_3^3-x_1^3)$, and let $P_2=P_2(x_1,x_2,x_3)=x_1^3x_2^3x_3^3$. Then any singular point of $S_2$ satisfies the one of the two systems of equations:
\begin{equation}
    \begin{cases}
      x_0=0\\
      P_2\frac{\partial P_1}{\partial x_1}+(P_1-2P_2)\frac{\partial P_2}{\partial x_1}=0\\
       P_2\frac{\partial P_1}{\partial x_2}+(P_1-2P_2)\frac{\partial P_2}{\partial x_2}=0\\
        P_2\frac{\partial P_1}{\partial x_3}+(P_1-2P_2)\frac{\partial P_2}{\partial x_3}=0\\
    \end{cases}\,.
\end{equation}
or 
\begin{equation}
    \begin{cases}
      x_0^9=-\frac{1}{2}(P_1-P_2)\\
      (P_1+P_2)\frac{\partial P_1}{\partial x_1}+(P_1-3P_2)\frac{\partial P_2}{\partial x_1}=0\\
       (P_1+P_2)\frac{\partial P_1}{\partial x_2}+(P_1-3P_2)\frac{\partial P_2}{\partial x_2}=0\\
        (P_1+P_2)\frac{\partial P_1}{\partial x_3}+(P_1-3P_2)\frac{\partial P_2}{\partial x_3}=0\\
    \end{cases}\,.
\end{equation}
In case $($A$.1)$, clearly points in $L_1\cup L_2\cup L_3$ are solutions of this system of equations. Assume $x_i\neq 0, i=1,2,3$. Then multiply the second equation of $($A$.1)$ by $x_1$, multiply the third equation of $($A$.1)$ by $x_2$, multiply the last equation of $($A$.1)$ by $x_3$ and add the resulting equations. By Euler's theorem on homogeneous polynomial, we have 
$$P_1-P_2=0$$
and the partial derivatives can be simplified as
$$x_0=0$$
$$\frac{\partial P_1}{\partial x_1}-\frac{\partial P_2}{\partial x_1}=0$$
$$\frac{\partial P_1}{\partial x_2}-\frac{\partial P_2}{\partial x_2}=0$$
$$\frac{\partial P_1}{\partial x_3}-\frac{\partial P_2}{\partial x_3}=0$$
Hence by Lemma \ref{l:singulocus of P1P2}, the set of all singularities in this case is identified to $L_1\cup L_2\cup L_3$.

In case $($A$.2)$, it is easy to check if some $x_i=0, i=1,2,3$, then either we are reduced to case $($A$.1)$ or we obtain that all of them are 0. So we again assume $x_i\neq 0, i=1,2,3$, and use the same trick as the previous case. By Euler's theorem on homogeneous polynomial, we have 
$$0=P_1^2+2P_1P_2-3P_2^2=(P_1-P_2)(P_1+3P_2)$$
If $P_1-P_2=0$, then $x_0=0$, we are reduced to the case $($A$.1)$. So the only new possibility is 
$P_1+3P_2=0$. Then the partial derivatives are exactly the partial derivatives for the curve $P_1+3P_2=0$. Again by Lemma \ref{l:singulocus of P1P2}, the set of singularities of $S_2$ are $L_1\cup L_2\cup L_3$.
\end{proof}

\begin{lemma}
\label{l:transversally}
Let
$$G_1=\{x_0^9-x_1^9+x_2^8x_3+x_3^8x_2=0\}$$
$$G_2=\{x_0^{21}+x_1^{21}+x_2^{21}-x_3^{21}=0\}$$
Then $G_1$ and $G_2$ are regular surface in $\mP^3_{\C}$. And furthermore $G_1$ and $G_2$ intersect transversally. 
\end{lemma}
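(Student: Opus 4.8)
The plan is to verify everything through the Jacobian criterion, treating the two regularity statements first and then transversality.

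\textbf{Regularity.} Write $F_1 = x_0^9 - x_1^9 + x_2^8 x_3 + x_3^8 x_2$ and $F_2 = x_0^{21} + x_1^{21} + x_2^{21} - x_3^{21}$. For $G_2$ the gradient is $21(x_0^{20}, x_1^{20}, x_2^{20}, -x_3^{20})$, which vanishes only at the origin, so $G_2$ is regular (the usual Fermat hypersurface computation). For $G_1$, the relations $\partial F_1/\partial x_0 = 9x_0^8$ and $\partial F_1/\partial x_1 = -9x_1^8$ force $x_0 = x_1 = 0$ at any singular point; the remaining partials factor as $\partial F_1/\partial x_2 = x_3(8x_2^7 + x_3^7)$ and $\partial F_1/\partial x_3 = x_2(x_2^7 + 8x_3^7)$, and viewing their common vanishing as a linear system in $(x_2^7, x_3^7)$ whose determinant is $1 - 64 = -63 \neq 0$ forces $x_2 = x_3 = 0$. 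Hence $G_1$ has no singular point and is regular.

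\textbf{Transversality.} It suffices to show the $2 \times 4$ Jacobian of $(F_1, F_2)$ has rank $2$ at every point $p \in G_1 \cap G_2$; equivalently, since $\nabla F_2$ never vanishes on $\mP^3$, that $\nabla F_1(p) = \lambda \nabla F_2(p)$ has no solution $\lambda \in \C$ with $p \in G_1 \cap G_2$. Componentwise this proportionality reads
\[
9x_0^8 = 21\lambda x_0^{20},\quad -9x_1^8 = 21\lambda x_1^{20},\quad x_3(8x_2^7 + x_3^7) = 21\lambda x_2^{20},\quad x_2(x_2^7 + 8x_3^7) = -21\lambda x_3^{20}.
\]
I would then split into cases according to which coordinates vanish. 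If $x_0 = 0$ or $x_1 = 0$ the corresponding equation is vacuous, and the remaining equations together with $F_2 = 0$ restricted to that hyperplane cut out only finitely many candidate points, each ruled out by inspection. In the case $x_0 x_1 x_2 x_3 \neq 0$, eliminating $\lambda$ between the first two equations yields $x_1^{12} = -x_0^{12}$, while dividing the last two produces the relation $u^{28} + 8u^{21} + 8u^7 + 1 = 0$ with $u = x_2/x_3$ (equivalently $v^4 + 8v^3 + 8v + 1 = 0$ for $v = u^7$); combining these with the value of $\lambda$ coming from the first equation and with $F_1 = F_2 = 0$ leaves an explicit, over-determined polynomial system which I claim has no solution.

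\textbf{Main obstacle.} The real work is this last step: confirming that the univariate condition on $u$, the relation $x_1^{12} = -x_0^{12}$, the scaling fixed by $\lambda$, and the equations $F_1 = F_2 = 0$ are jointly inconsistent. This is a finite elimination/resultant computation, and — as with the other verifications in this appendix — it can be carried out, or double-checked, in a computer algebra system such as Macaulay2, concretely by checking that the ideal generated by $F_1$, $F_2$, and the $2 \times 2$ minors of the Jacobian defines the empty subscheme of $\mP^3_{\C}$.
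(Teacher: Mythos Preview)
Your approach is essentially the paper's: same Jacobian check for regularity, same proportionality setup for transversality, same quartic $v^4+8v^3+8v+1=0$ in $v=(x_2/x_3)^7$ and the same relation $x_0^{12}+x_1^{12}=0$ in the generic case. The only difference is at what you call ``the main obstacle'': you defer the final consistency check to a CAS, whereas the paper carries it out by hand. Writing $x_0=\tau x_1$ with $\tau^{12}=-1$ (when $x_0x_1\neq 0$), substituting into $F_1=F_2=0$ and eliminating $x_1$ gives
\[
(\tau^9-1)^8=\frac{t^2(t+1)^{14}}{(t^3-1)^6},\qquad t=v,
\]
and the paper then lists the four real/complex roots of the quartic explicitly and compares absolute values of both sides to rule out each of them; the degenerate sub-cases $x_0=x_1=0$ and exactly one of $x_0,x_1$ vanishing are dispatched similarly. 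So the paper's proof is self-contained (if numerical at the very last line), while yours is a correct outline that leaves the verification to software.
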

\begin{proof}
$G_2$ is clearly a regular surface in $\mP^3_{\C}$. Taking partial derivatives of defining equation of $G_1$, the singular points are defined by the equations:
$$9x_0^8=0$$
$$-9x_1^8=0$$
$$8x_2^7x_3+x_3^8=0$$
$$8x_3^7x_2+x_2^8=0$$
This gives $x_0=x_1=0$. Note that if one of $x_2$ or $x_3$ is 0, so is the other. Assume $x_2\neq 0$ and $x_3\neq 0$, we get $8x_2^7+x_3^7=0, 8x_3^7+x_2^7=0$. Then again $x_2=x_3=0$, a contradiction. Hence $G_1$ is also a regular surface in $\mP^3_{\C}$.

To check $G_1$ intersects $G_2$ transversally, we prove by contradiction:
Assume there is a point $P=[x_0:x_1:x_2:x_3]\in G_1\cap G_2$, such that there exists a nonzero complex number $k$, with 
$$21x_0^{20}=k(9x_0^8)$$
$$-21x_1^{20}=k(9x_1^8)$$
$$21x_2^{20}=k(8x_2^7x_3+x_3^8)$$
$$-21x_3^{20}=k(8x_3^7x_2+x_2^8)$$
Where the left side of each equations is the partial derivatives of defining equations of $G_1$, and the right hand side is $k$ times the partial derivatives of defining equations of $G_2$.
We split into several cases:
\begin{enumerate}
    \item[\bf Case 1:] $x_2=0$ or $x_3=0$. In this case, we clearly have $x_2=x_3=0$, hence $x_0\neq 0$ and $x_1\neq 0$. So the partial derivatives with respect $x_0$ and $x_1$ tells us:
    $$x_0^{12}=\frac{9}{21}k=-x_1^{12}$$
    As $P\in G_1\cap G_2$, we also have 
    $$x_0^9-x_1^9=0$$
    $$x_0^{21}+x_1^{21}=0$$
    This forces $x_0=x_1=0$, which makes this case impossible.
    \item[\bf Case 2:]$x_2\neq 0$ and $x_3\neq 0$. Then the partial derivatives with respect to $x_2$ and $x_3$ shows that 
    $$\frac{x_2^{20}}{8x_2^7x_3+x_3^8}=\frac{k}{21}=-\frac{x_3^{20}}{8x_3^7x_2+x_2^8}$$
    (note that the denominator is nonzero, otherwise by the partial derivatives with respect to $x_2$ and $x_3$, one of $x_2$ or $x_3$ is 0. This contradicts to the assumption.)
    This can be simplified to 
   $$\frac{x_2^{21}}{8x_2^7+x_3^7}=-\frac{x_3^{21}}{8x_3^7+x_2^7}$$
   Since $x_3\neq 0$, we assume $x_3=1$ without lose of generality. Let $t=x_2^7$, we see the above relation shows that $t$ is a root of the following polynomial:
$$t^4+8t^3+8t+1=0$$
Now we have three subcases:
\begin{enumerate}
     \item[Sub-case 1:] $x_0=0$ and $x_1=0$. Then the defining polynomial of $G_2$ tells us 
     $$t^3-1=0$$
     This contradicts to the relation: $t^4+8t^3+8t+1=0$.
     \item[Sub-case 2:] $x_0\neq0$ and $x_1\neq 0$. In this case, a similar argument as in Case 1 shows that 
$$x_0^{12}+x_1^{12}=0$$
So we may assume $x_0=\tau x_1$, where $\tau$ satisfies $\tau^{12}+1=0$. Plug these information into defining equations of $G_1$ and $G_2$, we get: 
$$G_1:(\tau^9-1)x_1^9+x_2x_3(t+1)=0$$
$$G_2:-(\tau^9-1)x_1^{21}+t^3-1=0$$
Note that $\tau^9-1\neq 0$, and hence the above two equations shows that $t+1\neq 0$ and $t^3-1\neq 0$. 
Taking ratio of the above two equations, we have: 
$$-x_1^{12}=\frac{t^3-1}{x_2x_3(t+1)}$$
Compare the above relation with $G_1$, we have: 
$$x_1^3=\frac{(\tau^9-1)(t^3-1)}{x_2^2(t+1)^2}$$
Hence we get:
$$\frac{t^3-1}{\tau^9-1}=x_1^{21}=\frac{(\tau^9-1)^7(t^3-1)^7}{t^2(t+1)^{14}}$$
This is simplified to 
$$(\tau^9-1)^8(t^3-1)^6-t^2(t+1)^{14}=0$$
That is
$$(\tau^9-1)^8=\frac{t^2(t+1)^{14}}{(t^3-1)^6}$$
Since $t$ satisfies $t^4+8t^3+8t+1=0$, we list all roots of this polynomial:
$$t_1=-2-\frac{3}{\sqrt{2}}-\sqrt{\frac{15}{2}+6\sqrt{2}}$$
$$t_2=-2-\frac{3}{\sqrt{2}}+\sqrt{\frac{15}{2}+6\sqrt{2}}$$
$$t_3=-2+\frac{3}{\sqrt{2}}-\sqrt{-1}\sqrt{-\frac{15}{2}+6\sqrt{2}}$$
$$t_4=-2+\frac{3}{\sqrt{2}}+\sqrt{-1}\sqrt{-\frac{15}{2}+6\sqrt{2}}$$
By taking norm of both sides of $(\tau^9-1)^8=\frac{t^2(t+1)^{14}}{(t^3-1)^6}$ for each value of $t$ above, we see all four possible values of $t$ are impossible. (Indeed, one can check the norm of the left hand side has two possible estimated values: $0.118$ or $135.882$, while the norm of the right hand side has two possible estimated values: $0.00238$ or $14.2778$.)
\item[Sub-case 3:] One of $x_0$ or $x_1$ is $0$, and the other is nonzero. A similar discussion as in Sub-case 2 gives us 
$$1=\frac{t^2(t+1)^{14}}{(t^3-1)^6}$$
Again by taking norms of both sides, we see this is also impossible.
\end{enumerate}
 \end{enumerate}
 This completes the calculation.
\end{proof}
\begin{lemma}
\label{l:singularties of F1G1}
Use notations of Example \ref{e} and Lemma \ref{l:singularlocus of S2}, let $$G_1=\{x_0^9-x_1^9+x_2^8x_3+x_3^8x_2=0\}$$
Then for any $[t_0:t_1]\neq[1:0]\in \mP^1_{\C}$,
$$F_{S_1}^{[t_0:t_1]}=t_0F_{S_1}+t_1(G_1-F_{S_1})=0$$
defines irreducible surfaces in $\mP^3_{\C}$.

\end{lemma}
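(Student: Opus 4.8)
The plan is to convert the assertion into an irreducibility statement for one polynomial, dispatch it by Eisenstein's criterion for every value of $[t_0:t_1]$ with $t_0\neq 0$, and handle the single remaining value $[0:1]$ by an explicit computation. Since $[t_0:t_1]\neq[1:0]$ just says $t_1\neq 0$, expanding gives
$$F_{S_1}^{[t_0:t_1]}=(t_0-t_1)F_{S_1}+t_1G_1=t_0\,x_0^9+a_0,\qquad a_0:=(t_0-t_1)P_1+t_1\bigl(-x_1^9+x_2^8x_3+x_3^8x_2\bigr)\in\C[x_1,x_2,x_3],$$
with $P_1=(x_1^3-x_2^3)(x_2^3-x_3^3)(x_3^3-x_1^3)$ as in Lemma~\ref{l:singulocus of P1P2}; here $a_0\neq 0$ because the monomial $x_1^9$ does not occur in $P_1$, and everything is homogeneous of degree $9$. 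Thus it suffices to prove $F_{S_1}^{[t_0:t_1]}$ is irreducible in $\C[x_0,x_1,x_2,x_3]$.

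\emph{The case $t_0\neq 0$.} View $F_{S_1}^{[t_0:t_1]}=t_0x_0^9+a_0$ as a polynomial in $x_0$ over the unique factorization domain $R=\C[x_1,x_2,x_3]$; its leading coefficient $t_0$ is a unit, so it is primitive. The key point is that $a_0$ admits an irreducible factor of multiplicity one: the point $[0:1:0]$ lies on $\{a_0=0\}\subset\mP^2$ and is a smooth point of it, since $a_0(0,1,0)=0$ while $\partial a_0/\partial x_3\,(0,1,0)=t_1\neq 0$ — indeed $\partial P_1/\partial x_3$ is divisible by $x_3^2$, so the only contribution to the derivative at $(0,1,0)$ comes from the term $t_1x_2^8$. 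Hence $a_0$ is not a unit times a square, and if $q$ denotes the irreducible factor of $a_0$ through $[0:1:0]$ then $q$ divides $a_0$ exactly once. Eisenstein's criterion at the principal prime $(q)\subset R$ — its only non-leading coefficient $a_0$ lies in $(q)$ but not in $(q^2)$, and the leading coefficient $t_0$ lies outside $(q)$ — then shows $F_{S_1}^{[t_0:t_1]}$ is irreducible in $\C[x_0,x_1,x_2,x_3]$, so the surface $\{F_{S_1}^{[t_0:t_1]}=0\}$ is integral.

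\emph{The case $t_0=0$, i.e.\ $[t_0:t_1]=[0:1]$.} Here $F_{S_1}^{[0:1]}=G_1-F_{S_1}=-P_1-x_1^9+x_2^8x_3+x_3^8x_2$ involves only $x_1,x_2,x_3$, so $\{F_{S_1}^{[0:1]}=0\}$ is the projective cone over the plane curve $C=\{P_1+x_1^9-x_2^8x_3-x_3^8x_2=0\}\subset\mP^2$, and the cone is irreducible precisely when $C$ is. I would show $C$ is irreducible by checking it is smooth: on each of the three lines $\{x_i=0\}$ two of the three partial derivatives of the defining polynomial already force the point to be a coordinate vertex, and none of the three vertices lies on $C$; on the open set $\{x_1x_2x_3\neq 0\}$ the common vanishing of the three partials reduces, after eliminating two of the variables, to a one-variable polynomial equation with no roots. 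Equivalently, irreducibility of $P_1+x_1^9-x_2^8x_3-x_3^8x_2$ can be confirmed by a computer-algebra computation, in the spirit of the checks made for $S_1$ and $S_2$ in Example~\ref{e}.

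The one genuinely laborious step is the elimination in the case $[0:1]$. It is worth emphasising that this cannot be bypassed by the usual shortcut for hypersurfaces in $\mP^3$ (isolated singularities $\Rightarrow$ irreducible, via B\'ezout): a cone over a plane curve with only isolated singular points can still be reducible, so one really must exclude every singular point of $C$, or check irreducibility of its equation directly. All the rest is the Eisenstein argument above, which hinges only on the uniform fact that $[0:1:0]$ is a smooth point of $\{a_0=0\}$ for every admissible $[t_0:t_1]$.
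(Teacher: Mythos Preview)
For the case $t_0\neq 0$, your argument is exactly the paper's: view $F_{S_1}^{[t_0:t_1]}$ as a monic (up to $t_0$) polynomial in $x_0$ over $\C[x_1,x_2,x_3]$, observe that the constant term has a simple irreducible factor because $[0:1:0]$ is a smooth point of the curve it defines, and apply Eisenstein.

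You go further than the paper by isolating the case $[t_0:t_1]=[0:1]$, where the $x_0^9$ coefficient vanishes and the Eisenstein argument does not apply; the paper's proof does not treat this case separately. Your reduction to the irreducibility of the plane curve $C=\{P_1+x_1^9-x_2^8x_3-x_3^8x_2=0\}$ is correct. However, the sketched smoothness argument contains a factual slip: the assertion ``none of the three vertices lies on $C$'' is false. Both $[0:1:0]$ and $[0:0:1]$ lie on $C$, since $P_1$ and $x_1^9-x_2^8x_3-x_3^8x_2$ vanish at each of them; only $[1:0:0]$ is off the curve. It happens that both are smooth points (the $x_3$-partial equals $-1$ at $[0:1:0]$ and the $x_2$-partial equals $-1$ at $[0:0:1]$), so the overall smoothness claim may still be salvageable, but the case analysis on the coordinate lines $\{x_i=0\}$ is not as clean as you describe: on $\{x_1=0\}$, for instance, the remaining two partials are genuine degree-$8$ forms in $x_2,x_3$ and do not obviously force a vertex. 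Your computer-algebra fallback is the safe way to close this case.
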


\begin{proof}
We can view $F_{S_1}^{[t_0:t_1]}$ as a polynomial in $x_0$:
$$F_{S_1}^{[t_0:t_1]}=t_0x_0^9+t_1(-x_1^9+x_2^8x_3+x_3^8x_2)+(t_0-t_1)P_1=0$$
Hence by the Eisenstein's criterion, it suffices to show the curve in $\mP^2_{\C}$ defined by the constant term 
$$t_1(-x_1^9+x_2^8x_3+x_3^8x_2)+(t_0-t_1)P_1=0$$
has a regular point. It is clear that $[0:1:0]$ is such a point.
\end{proof}
\begin{lemma}
\label{l:singularties of F2G2}
Use notations of Example \ref{e} and Lemma \ref{l:singularlocus of S2}, let 
$$G_2=\{x_0^{21}+x_1^{21}+x_2^{21}-x_3^{21}=0\}$$
Then for any $[t_0:t_1]\neq [1:0]\in \mP^1_{\C}$,
$$F_{S_2}^{[t_0:t_1]}=t_0F_{S_2}(x_2^3-x_3^3)+t_1(G_2-F_{S_2}(x_2^3-x_3^3))=0$$
define irreducible surfaces in $\mP^3_{\C}$.

\end{lemma}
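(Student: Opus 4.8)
\emph{Approach.} The plan is to argue via Eisenstein's criterion, as in the proof of Lemma \ref{l:singularties of F1G1}, but there is a genuine obstruction to applying it directly. Writing $F_{S_2}^{[t_0:t_1]}=(t_0-t_1)F_{S_2}(x_2^3-x_3^3)+t_1G_2$ and collecting powers of $x_0$ over $\C[x_1,x_2,x_3]$, the only nonzero, non-leading, non-constant coefficients are divisible by $x_2^3-x_3^3$; since $t_0-t_1$ is a unit this forces any Eisenstein prime to be one of the three linear factors $x_2-\omega^kx_3$. But modulo such a factor the $x_0$-constant coefficient of $F_{S_2}^{[t_0:t_1]}$ reduces to $t_1x_1^{21}\neq 0$, so it is not in the prime ideal and Eisenstein fails. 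The remedy is to first restrict to the hyperplane $\{x_1=0\}$, where the $x_1^{21}$ term of $G_2$ disappears, and then descend.

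\emph{Reduction and computation.} First I would note that the coefficient of $x_0^{21}$ in $F_{S_2}^{[t_0:t_1]}$ is $t_1$ (only $G_2$ contributes $x_0^{21}$), and $t_1\neq 0$ since $[t_0:t_1]\neq[1:0]$; hence $x_1\nmid F_{S_2}^{[t_0:t_1]}$ and $F_{S_2}^{[t_0:t_1]}|_{x_1=0}$ is homogeneous of degree $21=\deg F_{S_2}^{[t_0:t_1]}$. Using $P_1|_{x_1=0}=-x_2^3x_3^3(x_2^3-x_3^3)$ and $P_2|_{x_1=0}=0$ one computes $F_{S_2}|_{x_1=0}=x_0^9\bigl(x_0^9-x_2^3x_3^3(x_2^3-x_3^3)\bigr)$, so $F_{S_2}(x_2^3-x_3^3)|_{x_1=0}$ is divisible by $x_0^9$ and in particular has vanishing $x_0$-constant term. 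Therefore, as a polynomial in $x_0$ over the UFD $\C[x_2,x_3]$,
\[
F_{S_2}^{[t_0:t_1]}|_{x_1=0}=t_1x_0^{21}+(t_0-t_1)(x_2^3-x_3^3)x_0^{18}-(t_0-t_1)(x_2^3-x_3^3)^2x_2^3x_3^3\,x_0^9+t_1(x_2^{21}-x_3^{21}).
\]

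\emph{Eisenstein at $x_2-x_3$.} Next I would apply Eisenstein's criterion to this polynomial with the prime element $x_2-x_3$: the leading coefficient $t_1$ is a unit, every lower coefficient is divisible by $x_2-x_3$ (the $x_0^{18}$ and $x_0^9$ ones by $x_2^3-x_3^3$, the others being zero), and the $x_0$-constant coefficient $t_1(x_2^{21}-x_3^{21})$ is divisible by $x_2-x_3$ but not by $(x_2-x_3)^2$, since $(x_2^{21}-x_3^{21})/(x_2-x_3)=\sum_{i=0}^{20}x_2^ix_3^{20-i}$ specializes to $21x_3^{20}\neq 0$ at $x_2=x_3$. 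Hence $F_{S_2}^{[t_0:t_1]}|_{x_1=0}$ is irreducible over $\C(x_2,x_3)$, and being primitive (its $x_0$-leading coefficient $t_1$ is a unit) it is irreducible in $\C[x_0,x_2,x_3]$ by Gauss's lemma.

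\emph{Descent and the main point.} Finally I would descend: if $F_{S_2}^{[t_0:t_1]}=AB$ with $A,B$ non-constant homogeneous polynomials, then $x_1\nmid F_{S_2}^{[t_0:t_1]}$ forces $x_1\nmid A$ and $x_1\nmid B$, so $A|_{x_1=0}$ and $B|_{x_1=0}$ are non-constant and their product is $F_{S_2}^{[t_0:t_1]}|_{x_1=0}$, contradicting its irreducibility. Thus $F_{S_2}^{[t_0:t_1]}$ is an irreducible polynomial, and $\{F_{S_2}^{[t_0:t_1]}=0\}$ is an irreducible (indeed integral) surface in $\mP^3_{\C}$. The crux of the argument — and the reason the explicit surface $G_2$ was chosen in this form — is the observation above: restricting to $\{x_1=0\}$ simultaneously destroys the obstructing $x_1^{21}$ term and replaces the $x_0$-constant term by $x_2^{21}-x_3^{21}$, which is divisible exactly once by each linear factor of $x_2^3-x_3^3$; everything else is routine bookkeeping.
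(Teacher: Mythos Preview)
Your proof is correct and uses essentially the same strategy as the paper: restrict to a coordinate hyperplane, apply Eisenstein at the prime $x_2-x_3$, and descend via the observation that any homogeneous factorization survives the restriction. The only difference is a swap of roles: the paper sets $x_0=0$ (taking the $x_0$-constant term) and applies Eisenstein as a polynomial in $x_1$, whereas you set $x_1=0$ and apply Eisenstein as a polynomial in $x_0$; in both cases the constant term becomes $t_1(x_2^{21}-x_3^{21})$ and the argument goes through identically.
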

\begin{proof}
View $F_{S_2}^{[t_0:t_1]}$ as a polynomial of $x_0$. As any factor of a homogeneous polynomial is also homogeneous, it suffices to show the constant term with respect to $x_0$ is itself irreducible. That is we need to show that

$$t_1x_1^{21}-(t_0-t_1)(x_2^3-x_3^3)P_2(P_1-P_2)+t_1(x_2^{21}-x_3^{21})=0$$
is irreducible. View the above polynomial as a polynomial of $x_1$, using Eisenstein criterion with the prime factor $(x_2-x_3)$, we get the conclusion.
\end{proof}
\begin{lemma}
\label{l:reduceness of local model} With notations as in the proof of Theorem \ref{t:flatness} and Corollary \ref{c:associated Brauer-Severi surface bundle}, we have that $Y_S$ is reduced.
\end{lemma}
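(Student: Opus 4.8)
The plan is simply to carry out the closed-point verification that was deferred in the proof of Corollary~\ref{c:associated Brauer-Severi surface bundle}. Recall the reduction given there: since $\pi\colon Y_S\to\mP^3_\C$ is projective, hence closed, every point of $Y_S$ specializes to a point lying in a closed fiber, and a localization of a reduced ring is reduced; so it suffices to prove that $\mathscr{O}_{Y_S,y'}$ is reduced for every closed point $y'\in Y_S$. Setting $p=\pi(y')$ and writing $\Lambda_p=(f_p,g_p)_\omega$ as in the proof of Theorem~\ref{t:flatness}, I would run through the same three cases of that proof. In Case~1, where $p$ lies outside the discriminant locus, $\Lambda_p$ is Azumaya, so $V_{\Lambda_p}$ is smooth over the regular local ring $\mathscr{O}_{\mP^3_\C,p}$, hence regular, hence reduced.

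For Cases~2 and~3 the point is that the explicit local models appearing in the proof of Theorem~\ref{t:flatness} are complete intersections. In Case~2 ($f_p$ a unit), after the faithfully flat base change $R_p=\mathscr{O}_{\mP^3_\C,p}[T]/(T^3-f_p)$ the scheme $V_{\Lambda_p}\otimes_{\mathscr{O}_{\mP^3_\C,p}} R_p$ is covered by affine opens each cut out by the four equations displayed there inside $\mA^6_{R_p}$; in Case~3 (the worst locus) Maeda's description \cite[Lemma~2.4]{MR1480776} gives $V_{\Lambda_p}=U_1\cup U_2\cup U_3$ with $U_1,U_2$ hypersurfaces in $\mA^3_{\mathscr{O}_{\mP^3_\C,p}}$ and $U_3$ a $(3,3)$-complete intersection in $\mA^4_{\mathscr{O}_{\mP^3_\C,p}}$. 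In either case the local model is a union of complete intersections, hence Cohen--Macaulay, hence has no embedded components; that is, Serre's condition $S_1$ holds and $\mathrm{Ass}$ consists only of minimal primes.

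To upgrade $S_1$ to reducedness it remains to check Serre's $R_0$, i.e.\ regularity at each minimal prime. Here I would use the flatness of $\pi$ established in Theorem~\ref{t:flatness} (which rests only on miracle flatness and is logically independent of reducedness): since the base is integral, every minimal prime of the local model lies over the generic point of the base, where the fibre is $\mP^2$ over the relevant function field, a smooth integral scheme. Thus every minimal prime of $V_{\Lambda_p}$ (resp.\ of $V_{\Lambda_p}\otimes R_p$) sits inside a smooth generic fibre, so $R_0$ holds; combined with $S_1$ this gives reducedness of $V_{\Lambda_p}\otimes R_p$ in Case~2, which then descends along the faithfully flat map $V_{\Lambda_p}\otimes R_p\to V_{\Lambda_p}$, and reducedness of $V_{\Lambda_p}$ directly in Case~3. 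As these cases exhaust all closed points $y'$, we conclude $Y_S$ is reduced.

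The only genuine input is that the local models are complete intersections, which is exactly what the explicit equations of Theorem~\ref{t:flatness} (and Maeda's lemma) supply; the rest is the Serre criterion. One subtlety worth emphasizing is that there is no fibrewise shortcut: the special fibres of type ``cone over a twisted cubic'' in Definition~\ref{d:bundle} are themselves non-reduced, so reducedness of $Y_S$ must be read off the total space. In fact a slicker variant dispenses with the Cohen--Macaulay bookkeeping entirely: flatness of $\pi$ over the integral scheme $\mP^3_\C$ forces $\mathrm{Ass}(Y_S)=\mathrm{Ass}(Y_{S,\eta})$, and the generic fibre $Y_{S,\eta}$ is a Brauer--Severi surface, hence integral, so $Y_S$ has a single associated point --- its generic point --- with reduced local ring there, whence $Y_S$ is reduced. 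I would present the complete-intersection argument as the main line since it matches the explicit computations invoked in the statement, and mention the flatness argument as a remark.
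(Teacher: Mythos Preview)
Your proof is correct and takes a genuinely different route from the paper's. The paper handles Case~2 by writing the four defining equations on a sample affine chart and observing by inspection that the ideal is radical; for Case~3 it notes that $U_1,U_2$ are hypersurfaces cut out by irreducible polynomials (hence integral), and for $U_3$ it computes the $2\times 7$ Jacobian of $F_1,F_2$ explicitly to show the singular locus has codimension $\geq 3$, giving $R_0$ directly. In other words, the paper verifies $R_0$ by hand on each chart.

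Your argument replaces all of this explicit Jacobian/radical checking by a single structural observation: once flatness over an integral base is known (and, as you correctly note, the miracle-flatness proof in Theorem~\ref{t:flatness} is independent of reducedness), the associated points of the total space all lie in the generic fibre, which is a Brauer--Severi surface and hence integral. This yields $R_0$ (indeed the full ``slicker variant'' yields reducedness outright, via EGA~IV~3.3.1) without touching a single equation. The trade-off is that the paper's proof is entirely elementary and self-contained, while yours imports the EGA result on associated points under flat maps; on the other hand, your argument is uniform across Cases~2 and~3, avoids the chart-by-chart bookkeeping, and makes transparent that nothing special about $f_p,g_p$ is needed beyond the generic fibre being smooth. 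Either approach is fine; yours is cleaner, and the slicker variant you mention would in fact also recover irreducibility of $Y_S$ without the separate argument the paper gives.
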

\begin{proof}
As stated in the proof of Corollary \ref{c:associated Brauer-Severi surface bundle}, it suffices to check that over any closed point $p\in \mP^3_{\C}$, and any point $y$ lying in the fiber over $p$, the local ring 
$\mathscr{O}_{V_{\Lambda_p},y}\cong \mathscr{O}_{Y_{S},y}$ is reduced.

In the proof of Theorem \ref{t:flatness}, we provide open affine covers for each local model $V_{\Lambda_p}$. Hence it suffice to show the coordinate ring of each open affine set appearing in these open affine covers is reduced. We discuss the cases as in the proof of Theorem \ref{t:flatness} separately. First, Case 1 is trivial as the local model is regular.

In Case 2, we consider the the affine chart
$\{\xi_{11}=\xi_{21}=\xi_{31}=1\}$. Then its coordinate ring is 
$$R_p[\xi_{12},\xi_{13},\xi_{22},\xi_{23},\xi_{32},\xi_{33}]/(g_p\xi_{22}-\xi_{12},g_p\xi_{23}-\xi_{13},g_p\xi_{32}-g_p\xi_{12}, g_p\xi_{33}-\xi_{13})$$
It is easy to check that this defining ideal is radical. All the other affine charts can be checked similarly.

In Case 3, by \cite[Lemma~2.4]{MR1480776}, the local model has an open affine cover consisting of three open affine charts. The first two affine charts are both hypersurfaces in $\mA^3_{R_P}$  and since each is defined by an irreducible polynomial, each affine chart is reduced. For the third chart, we need to be careful since it is not a hypersurface. Its coordinate ring is given by 
$$R_p[x,y,z,w]/(F_1, F_2),$$
$$F_1=y^3-\omega x^2w+(1-\omega)xyz-f_p,$$
$$F_2=z^3-\omega^2 xw^2-(1-\omega)xyz-g_p.$$
for some $f_p.g_p\in R_p$. Here $\omega$ is a primitive third root of unity.
Recall that $R_p=\mathscr{O}_{\mP^3_{\C},p}=\C[\bar{x}_1,\bar{x}_2,\bar{x}_3]_{S_0}$, where $\bar{x}_i$ is a coordinate for some standard affine chart in $\mP^3_{\C}$, and $S_0$ is the multiplicative set $\C[\bar{x}_1,\bar{x}_2,\bar{x}_3]-m_p$ with $m_p$ the maximal ideal associates to $p$. Hence:
$$\displaystyle R_p[x,y,z,w]/(F_1, F_2) \cong(\C[\bar{x}_1,\bar{x}_2,\bar{x}_3,x,y,z,w]/(F_1, F_2))_{S_0}.$$
We check that this algebra is reduced using Serre's criterion. Namely, we verify whether our ring satisfies $(R_0)$ and $(S_1)$ \cite{stacks-project1}. Note that $F_1, F_2$ do not have common factors in the polynomial ring $\C[\bar{x}_1,\bar{x}_2,\bar{x}_3,x,y,z,w]$. Hence $F_1,F_2$ form a regular sequence, and so We have that $\C[\bar{x}_1,\bar{x}_2,\bar{x}_3,x,y,z,w]/(F_1, F_2)$ is a complete intersection. In particular, this affine chart is Cohen-Macaulay. This shows  $\C[\bar{x}_1,\bar{x}_2,\bar{x}_3,x,y,z,w]/(F_1, F_2))_{S_0}$ is also Cohen-Macaulay and hence satisfies Serre's condition $(S_1)$. On the other hand, one easily checks that $F_1$ and $F_2$  intersect transversally by showing that the rows of the jacobian matrix are never proportional along their intersection whenever both of them are nonzero. Note that this follows immediately since the part of the jacobian matrix corresponding to the variables $x, y, z,$ and $w$ already satisfies this property. Hence the $2\times 7$ Jacobian matrix of $F_1,F_2$ is not of full rank if and only if at least one of the two rows is zero. By the Jacobian criterion, these are precisely the singular points. We easily see that these points correspond to prime ideals in $\C[\bar{x}_1,\bar{x}_2,\bar{x}_3,x,y,z,w]$ containing one of the following ideals: $(x,y,f_p)$,$(z,x,y,g_p)$,$(z,x,w,g_p)$ or $(z,y,w,g_p)$. Hence singular set of $\C[\bar{x}_1,\bar{x}_2,\bar{x}_3,x,y,z,w]/(F_1, F_2)$ has codimension at least 3, which remains true by passing to the localization with respect to  $S_0$ as $f_p,g_p$ lives in the maximal ideal of $\mathscr{O}_{\mP^3_{\C},p}$. Thus $R_p[x,y,z,w]/(F_1, F_2)$ is regular in codimension 0, namely $(R_0)$. Hence this affine chart is also reduced. This completes the proof.
\end{proof}

\bibliographystyle{alpha}

\bibliography{ref.bib}

\end{document}